\documentclass[12pt,reqno]{amsart}
\usepackage{color}
\usepackage{amsmath, amsthm, amssymb}
\usepackage{amsfonts}
\usepackage{mathrsfs}
\usepackage{amscd}
\usepackage{wrapfig}
\usepackage{graphicx}
\usepackage[active]{srcltx}

\textheight=21truecm
\textwidth=15truecm
\voffset=-1cm
\hoffset=-1cm

\usepackage[ansinew]{inputenc}
\usepackage[dvips]{epsfig}
\usepackage{graphicx}
\usepackage[english]{babel}
\usepackage{hyperref}

\theoremstyle{plain}
\newtheorem{thm}{Theorem}[section]
\newtheorem{cor}[thm]{Corollary}
\newtheorem{lem}[thm]{Lemma}
\newtheorem{prop}[thm]{Proposition}

\theoremstyle{definition}
\newtheorem{defi}[thm]{Definition}

\theoremstyle{remark}

\numberwithin{equation}{section}

\newcommand{\average}{{\mathchoice {\kern1ex\vcenter{\hrule height.4pt
width 6pt depth0pt} \kern-9.7pt} {\kern1ex\vcenter{\hrule
height.4pt width 4.3pt depth0pt} \kern-7pt} {} {} }}

\def\R{\mathbb{R}}

\newcommand{\Q}{{\mathcal Q}}
\newcommand{\B}{{\mathcal B}}

\begin{document}

\title[Free boundary in the parabolic fractional obstacle problem]{Free boundary regularity in the\\ parabolic fractional obstacle problem}

\author[B. Barrios]{Bego\~na Barrios}
\address{Departamento de An\'{a}lisis Matem\'{a}tico, Universidad de La Laguna
\hfill \break \indent C/Astrof\'{\i}sico Francisco S\'{a}nchez s/n, 38271 -- La Laguna, Spain}
\email{bbarrios@ull.es}

\author[A. Figalli]{Alessio Figalli}
\address{The University of Texas at Austin, Department of Mathematics, 2515 Speedway, Austin, TX 78751, USA}
\email{figalli@math.utexas.edu}

\author[X. Ros-Oton]{Xavier Ros-Oton}
\address{The University of Texas at Austin, Department of Mathematics, 2515 Speedway, Austin, TX 78751, USA}
\email{ros.oton@math.utexas.edu}

\thanks{The first author was partially supported by a MEC-Juan de la Cierva postdoctoral fellowship
 (Spain) and MTM2013-40846-P, MINECO.}

\keywords{Parabolic obstacle problem; fractional Laplacian; free boundary}

\subjclass[2010]{35R35; 47G20; 35B65.}

\maketitle

\begin{abstract}
The parabolic obstacle problem for the fractional Laplacian naturally arises in American option models when the assets prices are driven by pure jump L\'evy processes.
In this paper we study the regularity of the free boundary.
Our main result establishes that, when $s>\frac12$, the free boundary is a $C^{1,\alpha}$ graph in $x$ and $t$ near any regular free boundary point $(x_0,t_0)\in \partial\{u>\varphi\}$.
Furthermore, we also prove that solutions $u$ are $C^{1+s}$ in $x$ and $t$ near such points, with a precise expansion of the form
\[u(x,t)-\varphi(x)=c_0\bigl((x-x_0)\cdot e+a(t-t_0)\bigr)_+^{1+s}+o\bigl(|x-x_0|^{1+s+\alpha}+ |t-t_0|^{1+s+\alpha}\bigr),\]
with $c_0>0$, $e\in \mathbb{S}^{n-1}$, and $a>0$.
\end{abstract}

\section{Introduction}

Obstacle problems of the form
\begin{equation}\label{intro1}
\min\bigl\{\mathcal L v,\,v-\varphi\bigr\}=0\quad\textrm{in}\quad \R^n\times (0,T),\quad
\end{equation}
\begin{equation}\label{intro2}
\ v(T)=\varphi\quad\textrm{in}\quad \R^n,
\end{equation}
arise in the study of optimal stopping problems for stochastic processes.
When the underlying stochastic process is a pure-jump L\'evy process, then $\mathcal L$ is a (backward) parabolic integro-differential operator of the form
\[\mathcal Lv(x,\tau)=-\partial_\tau v-\int_{\R^n}\bigl(v(x+z,\tau)-v(x,\tau)-\nabla v(x,\tau)\cdot z \chi_{B_1}(z)\bigr)\mu(dz),\]
where $\mu$ is the L\'evy measure (or jump measure).

An important motivation for studying such problems comes from mathematical finance \cite{Merton}, where this type of obstacle problems are used to model rational prices of American options.
In that context, the obstacle $\varphi$ is a payoff function, $T$ is the expiration date of the option, and the set $\{v=\varphi\}$ is called the exercise region; see the book \cite{CT} for a description of the model.

Here we assume that the underlying L\'evy process is stable (i.e., scale invariant) and rotationally symmetric.
Then, after the change of variable $t=T-\tau$, problem \eqref{intro1}-\eqref{intro2} becomes
\begin{equation}\label{obst-pb}\begin{split}
\min\bigl\{\partial_t u+(-\Delta)^su,\,u-\varphi\bigr\}&=0\quad\textrm{in}\ \, \R^n\times(0,T],\,\\
u(\cdot,0)&=\varphi\,\,\,\,\, \textrm{in}\ \, \R^n,
\end{split}\end{equation}
where $\varphi:\R^n\to\R$ is a smooth obstacle, and
\[\qquad (-\Delta)^sw(x)=c_{n,s}\,\textrm{p.v.}\int_{\R^n}(w(x)-w(x+z))\frac{dz}{|z|^{n+2s}},\qquad s\in(0,1).\]

Note that the scaling of the parabolic equation $\partial_t u+(-\Delta)^su=0$ changes completely 
depending on the value of $s$: while for $s>1/2$ space scales slower than time (as in the case of the classical heat equation $s=1$), for $s=1/2$ the scaling is hyperbolic (i.e., time and space scale in the same way), and for $s<1/2$ space scales faster than time.

The regularity of solutions to this problem was studied by Caffarelli and the second author in \cite{CF}.
Our goal here is to investigate the structure and regularity of the free boundary $\partial\{u=\varphi\}$.
Note that in the American option model the strategy changes discontinuously along the boundary of the exercise region $\{u=\varphi\}$, and thus it is important to understand the geometry and regularity of this set \cite{LS}.

Because the analysis of the regularity of the set $\partial\{u=\varphi\}$ is based on blow-up arguments, the way space and time rescale with respect to each other play a crucial role in the analysis.
As we shall explain in Section \ref{sect:main}, the most relevant regime for applications to finance is $s \in (\frac12,1)$, hence we shall focus on this case.
As explained in detail below, our main result establishes that the free boundary $\partial\{u=\varphi\}$ is $C^{1,\alpha}$ in $x$ and $t$ near regular points.

\subsection{Known results}

In the elliptic case ---which corresponds to the case $T=\infty$ in the optimal stopping model--- the regularity of solutions and free boundaries is quite well understood.
Indeed, by the results of Caffarelli-Salsa-Silvestre \cite{CSS}, solutions $u$ are $C^{1+s}(\R^n)$ and at any free boundary point $x_0\in \partial\{u=\varphi\}$ we have the following dichotomy:
\begin{itemize}
\item[(a)] either \quad $0<c\,r^{1+s}\leq \sup_{B_r(x_0)}(u-\varphi)\leq C\,r^{1+s}$ \vspace{2mm}
\item[(b)] or \quad $\sup_{B_r(x_0)}(u-\varphi)\leq C\,r^2$
\end{itemize}
Moreover, set of {regular points} (a) is an open subset of the free boundary, and it is locally a $C^{1,\alpha}$ graph.

After the results of \cite{CSS}, the set of singular points ---those at which the contact set has zero density--- was studied by Garofalo and Petrosyan in case $s=\frac12$ \cite{GP}.
Then, still when $s=\frac12$,  De Silva-Savin and Koch-Petrosyan-Shi proved that the regular set is $C^\infty$ \cite{DS,KPS}.
Under a superharmonicity assumption on the obstacle $\varphi$, the authors established in \cite{BFR} a complete characterization of free boundary points analogous to the one of the classical Laplacian, obtained in the seminal paper by Caffarelli \cite{C}.
Very recently, the results of \cite{CSS} have also been extended to more general nonlocal operators in \cite{CRS}.

Despite all these developments for the elliptic problem, much less is known in the parabolic  setting \eqref{obst-pb}.
The only result is due to Caffarelli and the second author: in \cite{CF}, they showed the optimal $C^{1+s}_x$ spatial regularity of solutions, as well as the $C^{\frac{1+s-\epsilon}{2s}}_t$ time regularity of solutions for all $\epsilon>0$.
However, nothing was known about the regularity of the free boundary in the parabolic setting.
The main reason for this lack of results is due to the fact that the approaches used in the stationary case completely fail in the evolutionary setting.
Indeed, the main tool to study the free boundary is based on classifications of blow-up profiles, and the papers \cite{CSS,GP,BFR} all use monotonicity-type formulas
that do not seem to exist in the parabolic setting.
Also, although the recent paper  \cite{CRS} circumvents the use of monotonicity formulas 
by combining Liouville and Harnack's type techniques, the methods there 
do not to apply in our context.
Hence, completely new ideas and techniques need to be introduced in the parabolic setting.

\subsection{Main result}
\label{sect:main}

Our main theorem extends the results of \cite{CSS} to the parabolic setting \eqref{obst-pb} when $s>\frac12$, and establishes the $C^{1,\alpha}$ regularity of the free boundary in $x$ and $t$ near regular points.
The result is new even in dimension $n=1$, and reads as follows (here and throughout the paper, we denote by $Q_r(x_0,t_0)=B_r(x_0)\times (t_0-r^{2s},t_0+r^{2s})$
the parabolic cylinder of size $r$ around $(x_0,t_0)$):

\begin{thm}\label{thm1}
Let $s\in(\frac12,1)$, let $\varphi\in C^4(\R^n)$ be an obstacle satisfying
\begin{equation}\label{obstacle}
\|D^k\varphi\|_{L^\infty(\R^n)}<\infty\quad \textrm{for}\quad 1\leq k\leq 4,
\end{equation}
and let $u$ be the solution of \eqref{obst-pb}.

Then, for each free boundary point $(x_0,t_0)\in \partial\{u=\varphi\}$, we have:
\begin{itemize}
\item[(i)] either
\[0<c\,r^{1+s}\leq \sup_{Q_r(x_0,t_0)}(u-\varphi)\leq C\,r^{1+s},\]
\item[(ii)] or
\[\hspace{40mm} 0\leq \sup_{Q_r(x_0,t_0)}(u-\varphi)\leq C_\epsilon\, r^{2-\epsilon}\qquad \textrm{for all}\ \,\epsilon>0.\]
\end{itemize}
Moreover, the set of points $(x_0,t_0)$ satisfying (i) is an open subset of the free boundary and it is locally a $C^{1,\alpha}$ graph in $x$ and $t$, for some small $\alpha>0$.

Furthermore, for any point $(x_0,t_0)$ satisfying (i) there is $r>0$ such that $u\in C^{1+s}_{x,t}(Q_r(x_0,t_0))$, and we have the expansion
\[u(x,t)-\varphi(x) = c_0\bigl((x-x_0)\cdot e+a(t-t_0)\bigr)_+^{1+s}+o\bigl(|x-x_0|^{1+s+\alpha} + |t-t_0|^{1+s+\alpha}\bigr),\]
for some $c_0>0$, $e\in \mathbb{S}^{n-1}$, and $a>0$.
\end{thm}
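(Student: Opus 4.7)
The strategy I would adopt is to perform a blow-up analysis at free boundary points and to exploit a decisive feature of the regime $s>\frac12$: under the parabolic scaling $(x,t)\mapsto(rx,r^{2s}t)$ the time variable is contracted much faster than the space variable, so blow-up limits at a free boundary point are independent of $t$ and solve an elliptic thin-obstacle problem. This reduces the classification of blow-ups to that of Caffarelli-Salsa-Silvestre \cite{CSS}, and a parabolic boundary Harnack argument for $\partial_t+(-\Delta)^s$ then upgrades the resulting information to $C^{1,\alpha}$ regularity of the free boundary in both $x$ and $t$.

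First I would set $w:=u-\varphi\ge 0$ and rewrite the problem as $\min\{\partial_t w+(-\Delta)^sw+(-\Delta)^s\varphi,\ w\}=0$ with $w(\cdot,0)\equiv 0$. Comparing $u$ with its time translates $u(\cdot,\cdot+h)$, the maximum principle yields $\partial_t u\ge 0$ in $\R^n\times(0,T]$; in particular the coincidence set is non-decreasing in $t$. Combining the $C^{1+s}_x$ estimate of \cite{CF} with a barrier-based non-degeneracy lemma for $\partial_t+(-\Delta)^s$ would then give both the sharp upper bound and the positive lower bound in the dichotomy (i)-(ii).

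At a point $(x_0,t_0)$ where (i) holds, I would consider the rescalings
\[
W_r(x,t):=\frac{w(x_0+rx,\,t_0+r^{2s}t)-P_r(x)}{r^{1+s}},
\]
where $P_r$ absorbs the Taylor expansion of $\varphi$ at $x_0$ up to the order that contributes to the limit; the residual forcing in the equation for $W_r$ is of order $r^{1-s}$ and vanishes as $r\to 0$. By (i) the $W_r$ are uniformly bounded, the $C^{1+s}_x$ bound of \cite{CF} gives spatial compactness, and the monotonicity $\partial_t u\ge 0$ together with the upper bound in (i) is then used, via a Liouville-type/iterative scaling argument exploiting the subcriticality of time relative to space when $s>\frac12$, to show that any subsequential limit $W$ must be independent of $t$. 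Thus $W$ is a global solution of the elliptic thin-obstacle problem with $(1+s)$-growth, and non-degeneracy forces $W\not\equiv 0$. The classification of \cite{CSS}, applied through the Caffarelli-Silvestre extension, then yields $W(x)=c_0(x\cdot e)_+^{1+s}$ for some $c_0>0$ and $e\in\mathbb{S}^{n-1}$; free boundary points whose rescaled limit vanishes fall into (ii) via a dyadic improvement-of-flatness argument.

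Once the blow-up has been identified, continuity of blow-ups with respect to the base point yields openness of the regular set and an almost-constant direction $e$ in a neighborhood; comparing $u$ with its shifts in a spatial cone around $e$ then produces $\partial_{e'}u\ge 0$ for every $e'$ close to $e$, hence Lipschitz regularity of the free boundary in $x$. To upgrade this to $C^{1,\alpha}$ in $(x,t)$, I would establish a boundary Harnack principle for the parabolic operator $\partial_t+(-\Delta)^s$ on the noncoincidence set, and apply it to the pair $\partial_{e'}u$ and $\partial_t u$, both nonnegative and vanishing on $\{u=\varphi\}$. Hölder continuity of the ratios $\partial_{e''}u/\partial_e u$ and $\partial_t u/\partial_e u$ gives the $C^{1,\alpha}$ regularity of the free boundary and identifies the velocity $a=\partial_t u/\partial_e u>0$; the stated asymptotic expansion then follows by iteratively comparing $w$ with the model profile $c_0((x-x_0)\cdot e+a(t-t_0))_+^{1+s}$. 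The main obstacle is the absence of Almgren-, Weiss-, or Monneau-type monotonicity formulas in this parabolic nonlocal setting; the scale separation $s>\frac12$ is what bypasses this by forcing blow-ups to be stationary, and I expect the second serious technical step---the parabolic boundary Harnack principle for $\partial_t+(-\Delta)^s$ on a domain with Lipschitz-in-$x$ free boundary---to be the most delicate part of the argument.
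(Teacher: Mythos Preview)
Your overall blow-up strategy matches the paper's: rescale parabolically, show the limit is time-independent when $s>\tfrac12$, and reduce to the elliptic classification of \cite{CSS}. Two substantive points deserve comment.

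First, you omit an ingredient that the paper makes essential: joint semiconvexity of $u$ in $(x,t)$, i.e.\ $\partial_{\xi\xi}u\ge -\hat C$ for every space-time direction $\xi$ (Lemma~\ref{semiconvex}). The monotonicity $\partial_t u\ge0$ alone does not force blow-ups to be time-independent; what the paper actually proves is a Liouville theorem for \emph{convex} global solutions (Theorem~\ref{thmclassif}), and convexity in $(x,t)$ is what guarantees the contact set of any blow-down contains a full vertical line $\{x=x_0\}$, hence the solution is constant in $t$. Without semiconvexity your ``Liouville-type/iterative scaling argument'' is not grounded.

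Second, your route to $C^{1,\alpha}$ regularity in $(x,t)$ diverges from the paper in a way that creates a real gap. You propose a \emph{parabolic} boundary Harnack principle for $\partial_t+(-\Delta)^s$ applied to the pair $(\partial_{e'}u,\partial_t u)$; you yourself flag this as the most delicate step, and indeed no such result is established (or invoked) here. The paper avoids it entirely: for each fixed $t$ it treats $w=\partial_e v_r(\cdot,\cdot,t)$ as a solution of an elliptic problem with right-hand side controlled by $|\partial_t\nabla v|\le C\kappa d_x^{s-1}$ and applies the \emph{elliptic} boundary regularity of \cite{RS-C1} to obtain $C^{1,\alpha}_x$ of the free boundary and the expansion $v=c(x_1,t_1)d_x^{1+s}+o(|x-x_1|^{1+s+\alpha})$ at every nearby free boundary point (Corollary~\ref{cor-C1alphax}). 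Temporal $C^{1,\beta}$ regularity is then obtained not through any Harnack principle but by a direct comparison of these spatial expansions at times $t$ and $t\pm\tau$ along a vertical segment (Proposition~\ref{C1alpha}): first showing $c(x',t)$ is H\"older in $t$, then bounding the second difference $G(x',t+\tau)+G(x',t-\tau)-2G(x',t)$. This argument uses only the already-established $C^{1+s}_t$ regularity of $u$ (Proposition~\ref{optimal_t}) and the Lipschitz bound on $G$. In short, the paper trades the unknown parabolic boundary Harnack for elliptic boundary regularity plus an elementary expansion-matching argument in time; your plan, as stated, rests on a tool that is not available.
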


It is important to notice that the assumption $s>\frac12$ is necessary for the previous result to hold.
Indeed, by the examples constructed in \cite{CF}, the structure of the free boundary would be different when $s\leq \frac12$.
More precisely, it was shown in \cite[Remark 3.7]{CF} that if $s=\frac12$ then there are global solutions which are homogeneous of degree $1+\beta$ for any $\frac12\leq \beta<1$.
This means that when $s=\frac12$ there will be free boundary points satisfying neither (i) nor (ii), and there is no ``gap'' between the homogeneities $1+s$ and 2 as in Theorem~\ref{thm1}.

From the financial modeling point of view, the assumption $s>\frac12$ is natural.
For example, it was shown in \cite{Nature} that the scaling exponent of an economic index (Standard \& Poor's 500) is around $2s=1.4$ (remarkably constant) over the six-year period 1984-1989.
Furthermore, in American option models the obstacle (payoff) $\varphi$ has frequently linear growth at infinity \cite{LS,CT}, and in that case $s>\frac12$ is needed for problem \eqref{obst-pb} to be well posed.
Notice also that our assumption \eqref{obstacle} does allow the obstacle $\varphi$ to have linear growth at infinity.

\subsection{Related problems}

In the elliptic case, the obstacle problem for the fractional Laplacian is equivalent to a thin obstacle problem in $\R^{n+1}$, also known as the Signorini problem when $s=\frac12$.
A parabolic version of the Signorini problem has been recently studied in \cite{DGPT,ACM}.

We emphasize that, although the time-independent version of the problem studied in \cite{DGPT,ACM} is equivalent to the obstacle problem for the half-Laplacian, the parabolic problem is of completely different nature from the one considered in the present paper.
In particular, notice that for the parabolic Signorini problem in \cite{DGPT,ACM} one has Almgren-type and other monotonicity formulas (analogous to the elliptic ones used in \cite{CSS,GP}), while no such monotonicity formulas are known for our problem \eqref{obst-pb}.

\subsection{Structure of the paper}

The paper is organized as follows.
In Section \ref{sec-prelim} we prove the semiconvexity of solutions in $(x,t)$.
In Section \ref{sec-classif} we classify all global convex solutions to the obstacle problem with subquadratic growth at infinity.
In Section \ref{sec4} we show that, at any regular point, a blow-up of the solution $u$ converges in the $C^1$ norm to a global convex solution with subquadratic growth.
In Section \ref{sec5} we prove that that the free boundary is Lipschitz in $x$ and $t$ near regular points.
In Section \ref{sec6} we show that the regular set is open, and that it is $C^{1,\alpha}$ in $x$.
Finally, we prove in Section \ref{sec7} that the free boundary is $C^{1,\beta}$ in $x$ and $t$ near regular points, and in Section \ref{sec8} we establish Theorem \ref{thm1}.

\section{Preliminaries}
\label{sec-prelim}

In this Section we provide some preliminary results.
First, we establish the semiconvexity of solutions in $x$ and $t$.
The proof is similar to \cite[Theorem 2.1]{ACM} or \cite[Lemma 3.1]{CF}.

\begin{lem}[Semiconvexity in $(x,t)$]\label{semiconvex}
Let $\varphi$ be any obstacle satisfying \eqref{obstacle}, and $u$ be the solution to \eqref{obst-pb}.
Let $\xi=(\alpha e,\beta) \in \R^n\times \R$, with $e\in \mathbb{S}^{n-1}$ and $\alpha^2+\beta^2=1$.
Then, we have
\[u_{\xi\xi}:=\partial_{\xi\xi}u\geq -\hat C,\]
where constant $\hat C$ depends only on $\varphi$.
\end{lem}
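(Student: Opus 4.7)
The plan is to follow the standard penalization strategy used in \cite{ACM} and \cite{CF}. I would approximate $u$ by the smooth solution $u_\epsilon$ of the semilinear equation
\[
\partial_t u_\epsilon + (-\Delta)^s u_\epsilon = \beta_\epsilon(u_\epsilon - \varphi), \qquad u_\epsilon(\cdot,0) = \varphi,
\]
where $\beta_\epsilon \in C^\infty(\R)$ is nonpositive, nonincreasing, convex, supported in $\{r\le \epsilon\}$, and chosen so that $u_\epsilon \to u$ locally uniformly as $\epsilon \to 0$ (the existence of such an approximation, along with smoothness of $u_\epsilon$, is standard and already used in \cite{CF}). The goal is to prove $\partial_{\xi\xi} u_\epsilon \ge -\hat C$ with $\hat C$ independent of $\epsilon$, and then pass to the limit.

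Set $W := \partial_{\xi\xi} u_\epsilon$ and $M := \|D^2\varphi\|_{L^\infty(\R^n)}$. Differentiating the penalized equation twice in $\xi$ yields
\[
\partial_t W + (-\Delta)^s W = \beta_\epsilon'(u_\epsilon - \varphi)\bigl(W - \partial_{\xi\xi}\varphi\bigr) + \beta_\epsilon''(u_\epsilon - \varphi)\bigl(\partial_\xi u_\epsilon - \partial_\xi\varphi\bigr)^2.
\]
Since $\beta_\epsilon'' \ge 0$ and $\partial_{\xi\xi}\varphi = \alpha^2 \partial_{ee}\varphi \ge -M$ (recall $\varphi$ is independent of $t$ and $\alpha^2 \le 1$), while $-\beta_\epsilon' \ge 0$, the rearrangement
\[
-\beta_\epsilon'\bigl(\partial_{\xi\xi}\varphi + M\bigr) \ge 0
\]
gives the supersolution-type inequality
\[
\partial_t V + (-\Delta)^s V \;\ge\; \beta_\epsilon'(u_\epsilon - \varphi)\,V \qquad \textrm{with}\quad V := W + M.
\]
The zeroth-order coefficient $\beta_\epsilon'(u_\epsilon - \varphi)$ is nonpositive and bounded (for each fixed $\epsilon$), so $V$ satisfies a linear parabolic supersolution inequality.

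I would then invoke the comparison principle for nonlocal parabolic equations: at an interior negative minimum of $V$, the left-hand side is $\le 0$ while the right-hand side is $\ge 0$ (product of two nonpositive numbers), forcing equality throughout the kernel support and propagating the negativity to the parabolic boundary, which yields a contradiction once we control $V$ at $t=0$ and as $|x| \to \infty$. Sending $\epsilon\to 0$, the uniform bound $W \ge -M$ descends to $u$ in the sense of distributions, giving the claim.

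The main obstacle is precisely the boundary analysis, i.e.\ controlling $V$ at $t=0$ and at spatial infinity when $\xi$ has a nontrivial time component $\beta \ne 0$. At $t=0$ only the spatial data $u_\epsilon(\cdot,0) = \varphi$ is given directly, so the time derivatives $\partial_t u_\epsilon(\cdot,0)$ and $\partial_{tt} u_\epsilon(\cdot,0)$ appearing in $\partial_{\xi\xi} u_\epsilon(\cdot,0)$ must be recovered from the equation itself; using $\varphi \in C^4$ with bounded derivatives of order $\le 4$ (hypothesis \eqref{obstacle}) together with the boundedness of $(-\Delta)^s$ and $(-\Delta)^{2s}$ on such $\varphi$ (this is where $s>\tfrac12$ is convenient for $2s \le 2$), one obtains a bound on $V(\cdot,0)$ that is uniform in $\epsilon$. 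The behavior at spatial infinity follows from the fact that $u_\epsilon - \varphi$ decays and the derivatives of $\varphi$ are globally bounded, so $V$ remains bounded below there as well. With these two boundary estimates in hand, the comparison argument closes and the proof is complete.
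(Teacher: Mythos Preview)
Your approach is essentially the paper's: penalize, differentiate twice in $\xi$, drop the $\beta_\epsilon''\ge 0$ term, add a constant to $W$, and apply the minimum principle using $\beta_\epsilon'\le 0$. The only slip is that the constant you add should not be $M=\|D^2\varphi\|_{L^\infty}$ but rather the quantity you yourself identify in the final paragraph, namely an upper bound for $\|\partial_{\xi\xi}u_\epsilon(\cdot,0)\|_{L^\infty}$ (which involves $\|\nabla(-\Delta)^s\varphi\|_{L^\infty}$ and $\|(-\Delta)^{2s}\varphi\|_{L^\infty}$ coming from the mixed and pure time derivatives at $t=0$); with that larger constant the inequality $\partial_t V+(-\Delta)^s V\ge \beta_\epsilon' V$ still holds (since $\partial_{\xi\xi}\varphi\ge -M\ge -\hat C$) and now $V(\cdot,0)\ge 0$, so the minimum principle closes exactly as in the paper.
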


\begin{proof}
We use a penalization method: it is well known that the solution $u$ can be constructed as the limit of $u^\varepsilon$ as $\varepsilon\rightarrow0$, where $u^\varepsilon$ are smooth solutions of
\[\begin{split}
\partial_t u^\varepsilon+(-\Delta)^su^\varepsilon&=\beta_\varepsilon(u_\varepsilon-\varphi)\quad\textrm{in}\ \, \R^n\times(0,T),\\
u^\varepsilon(\cdot,0)&=\varphi+\varepsilon\quad\quad\quad\,\textrm{at}\ \,t=0,
\end{split}\]
with $\beta_\varepsilon(z)=e^{-z/\varepsilon}$; see \cite[Lemma 3.1]{CF}.

Then, differentiating the equation twice and using that $\beta_\varepsilon''\geq0$, we get
\[\partial_t u_{\xi\xi}^\varepsilon+(-\Delta)^su_{\xi\xi}^\varepsilon \geq \beta_\varepsilon'(u^\varepsilon-\varphi) (u_{\xi\xi}^\varepsilon-\varphi_{\xi\xi})\quad\textrm{in}\ \, \R^n\times(0,T).\]
In particular, since $\beta_\varepsilon'\leq0$ we have
\[\partial_t (u_{\xi\xi}^\varepsilon+C_0)+(-\Delta)^s(u_{\xi\xi}^\varepsilon+C_0) \geq \beta_\varepsilon'(u^\varepsilon-\varphi) (u_{\xi\xi}^\varepsilon+C_0)\quad\textrm{in}\ \, \R^n\times(0,T),\]
where
\[\begin{split}
C_0&:= \|u^\varepsilon_{\xi\xi}(\cdot,0)\|_{L^\infty(\R^n)}\\
&\leq \alpha^2\|\partial_{ee}u^\varepsilon(\cdot,0)\|_{L^\infty(\R^n)}+ 2\alpha\beta\|\partial_e\partial_t u^\varepsilon(\cdot,0)\|_{L^\infty(\R^n)}+ \beta^2\|\partial_{tt} u^\varepsilon(\cdot,0)\|_{L^\infty(\R^n)}\\
&\leq \|D^2\varphi\|_{L^\infty(\R^n)}+ \|\nabla(-\Delta)^s\varphi\|_{L^\infty(\R^n)}+\|(-\Delta)^{2s}\varphi\|_{L^\infty(\R^n)}\\
&\leq C\Bigl(\|\nabla \varphi\|_{L^\infty(\R^n)}+\|D^2 \varphi\|_{L^\infty(\R^n)}+\|D^3 \varphi\|_{L^\infty(\R^n)}+
\|D^4 \varphi\|_{L^\infty(\R^n)}\Bigr)<\infty
\end{split}\]
thanks to \eqref{obstacle}.

Using again that $\beta_\varepsilon'\leq0$, it follows that $\beta_\varepsilon'(u^\varepsilon-\varphi)(u_{\xi\xi}^\varepsilon+C_0)\geq0$ whenever $u_{\xi\xi}^\varepsilon+C_0\leq0$.
Thanks to this fact, it follows that the function $w:=\min\{0,\,u_{\xi\xi}^\varepsilon+C_0\}$ satisfies
\[\partial_tw+(-\Delta)^sw\geq0\quad\textrm{in}\ \, \R^n\times(0,T).\]
Moreover,  by the definition of $C_0$, we have $w\equiv0$ at $t=0$.
Thus, by the minimum principle we get $w\geq0$, or equivalently $u_{\xi\xi}^\varepsilon+C_0\geq0$.
Letting $\varepsilon\to0$ we get the desired result.
\end{proof}

Throughout Sections \ref{sec-classif}, \ref{sec4}, \ref{sec5}, and \ref{sec6}, we will use the extension problem for the fractional Laplacian.
Namely, we will use that, for each fixed $t$, the function $u(x,t)$ can be extended to a function $u(x,y,t)$ satisfying
\[\left\{ \begin{array}{rcll}
u(x,0,t)&=&u(x,t) &\textrm{in}\ \mathbb{R}^n,\\
L_a u(x,y,t)&=& 0&\textrm{in}\ \mathbb{R}^{n+1}_{+},
\end{array}\right.\]
where $\R^{n+1}_+=\R^{n+1}\cap\{y>0\}$ and
\[L_a u:=\textrm{div}_{x,y}\bigl(y^{a}\nabla_{x,y} u\bigr),\qquad a=1-2s.\]
As shown in \cite{MO,CSext}, with this definition the fractional Laplacian can be computed as a (weighted) normal derivative of such extension $u(x,y,t)$, namely
\[\lim_{y\downarrow 0} y^{a}\partial_yu(x,y,t)=(-\Delta)^su(x,t)\quad \textrm{in}\ \mathbb{R}^n.\]
Therefore, our solution $u(x,y,t)$ to \eqref{obst-pb} satisfies
\[\begin{split}
L_au&=0\quad\textrm{in}\ \, \{y>0\}\times(0,T],\\
\min\bigl\{\partial_t u-\lim_{y\downarrow 0} y^{a}\partial_yu,\,u-\varphi\bigr\}&=0\quad\textrm{on}\ \, \{y=0\}\times(0,T],\\
u(\cdot,0,0)&=\varphi\quad \textrm{at}\ \,t=0.
\end{split}\]
Furthermore, given a free boundary point $(x_0,t_0)\in \partial\{u=\varphi\}$, we denote
\begin{equation}\label{v}
v(x,y,t):=u(x,y,t)-\varphi(x)+\frac{1}{4(1-s)}\,\Delta\varphi(x_0)\,y^2.
\end{equation}
With this definition it follows that $v=u-\varphi$ on $\{y=0\}$, and that
\begin{equation}\label{ec_v}
\begin{cases}
L_a v=y^{a}g(x) \quad &\mbox{in } \mathbb{R}^{n+1}_{+}\times[0,T]\setminus\{v(x,0,t)=0\},\\
v\ge 0,& \mbox{on } \{y=0\},\\
\lim_{y\downarrow 0} y^{a}\partial_yv=\partial_tv, & \mbox{on } \{v(x,0,t)>0\},\\
v(x,0,0)=0,
\end{cases}
\end{equation}
where $g(x):=\Delta\varphi(x)-\Delta\varphi (x_0)$. Also,
using the regularity of the obstacle (here we only need $\varphi\in C^{2,1}$), it follows that
\begin{equation}\label{g}
|g(x)|\leq C|x-x_0| \qquad \mbox{and} \qquad |\nabla g(x)|\leq c.
\end{equation}
Finally, throughout the paper, given $r \in (0,\infty]$, $\Q_r$ will denote the following (parabolic) cylinders in $\R^{n+1}_+$,
\[\Q_r(x_0,t_0):=\B_r(x_0)\times\bigl(t_0-r^{2s},t_0+r^{2s}\bigr),\qquad \textrm{and}\qquad \Q_r:=\Q_r(0,0),\]
while $Q_r$ will denote cylinders in $\R^n$,
\[Q_r(x_0,t_0):=B_r(x_0)\times\bigl(t_0-r^{2s},t_0+r^{2s}\bigr),\qquad \textrm{and}\qquad Q_r:=Q_r(0,0).\]
Here, $\B_r$ and $B_r$ denote balls in $\R^{n+1}_+$ and $\R^n$, respectively, i.e.,
\[\B_r(x_0):=\left\{(x,y)\in \R^{n+1}_+\,:\, |x-x_0|^2+y^2\leq r^2\right\},\qquad \B_r=\B_r(0),\]
\[ B_r(x_0):=\{x\in \R^n\,:\, |x-x_0|\leq r\},\qquad B_r=B_r(0).\]

\section{Classification of global convex solutions}
\label{sec-classif}

Because solutions to our problem are semiconvex in space-time (see Lemma \ref{semiconvex}),
the blow-up profiles that we shall consider will be convex in space-time.
Hence, it is natural to classify global convex solutions.

The main result of this section is the next theorem, which classifies all global convex solutions to the obstacle problem under a growth assumption on $u$.
Recall that $\Q_\infty=\{(x,y,t)\in \R^{n+1}_+\times(-\infty,\infty)\}$ and that $a=1-2s.$

\begin{thm}\label{thmclassif}
Let $s>\frac12$, and let $u\in C(\Q_\infty)$ satisfy
\begin{equation}\label{eqn-global}\begin{cases}
L_a u=0  \quad &\mbox{in } \Q_\infty\cap\{y>0\}\\
\min\left\{\partial_t u-\lim_{y\downarrow0}y^{a}\partial_yu,\,u\right\}=0 \quad &\mbox{on } \Q_\infty\cap\{y=0\}\\
D^2_{x,t}u \ge 0 \quad &\mbox{on } \Q_\infty\\
u\ge 0,\quad \partial_t u \ge 0 & \mbox{on } \Q_\infty\cap\{y=0\}.
\end{cases}\end{equation}
Assume in addition that $u(0,0,0)=0$, 
and that $u$ satisfies the growth control
\begin{equation}\label{growthctrolgradient}
\|u\|_{L^\infty(\Q_R)} \le R^{2-\epsilon} \quad \mbox{ for all }R\ge 1.
\end{equation}
Then, either $u \equiv 0$ or
\[ u(x,y,t) = K\, u_0(x\cdot e,y)\]
for some $e\in \mathbb{S}^{n-1}$ and $K>0$, where $u_0$ is the unique global solution to the elliptic problem for $n=1$ that is convex in the first variable
and satisfying $\|u_0\|_{L^\infty(\Q_1)}=1$.
Namely, $u_0$ is given by
\[u_0(z,y)=\frac{2^{-s}}{1-s}\bigl(\sqrt{z^2+y^2}+z\bigr)^s\bigl(z-s\sqrt{z^2+y^2}\bigr)\qquad \forall\,(z,y)\in \R^2_+,\]
and satisfies $u_0(z,0)=(z_+)^{1+s}$ on $\{y=0\}$.
\end{thm}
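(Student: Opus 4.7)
The plan is to reduce the problem to the time-independent case, after which the conclusion follows from the elliptic classification of Caffarelli--Salsa--Silvestre \cite{CSS}. The crucial role of the hypothesis $s>\frac12$ is that, combined with the subquadratic growth in $\Q_R$, it forces $\partial_t u$ to grow \emph{strictly sublinearly} at infinity, and ultimately that $u$ is stationary.

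First, I would extract quantitative growth on $v:=\partial_t u \ge 0$. Since $u$ is convex and nondecreasing in $t$, for any $T>0$,
\[
0 \le v(x,y,t) \le \frac{u(x,y,t+T)-u(x,y,t)}{T}.
\]
Choosing $T=R^{2s}$ and inserting the hypothesis $\|u\|_{L^\infty(\Q_R)}\le R^{2-\epsilon}$ yields
\[
\|v\|_{L^\infty(\Q_R)} \le C\,R^{2-\epsilon-2s},
\]
where the exponent $2-\epsilon-2s$ is strictly less than $1$ precisely because $s>\tfrac12$ (this is the only place $s>\tfrac12$ is used, and it is essential). Differentiating \eqref{eqn-global} in $t$, $v$ satisfies
\[
L_a v = 0 \quad \text{in }\{y>0\}, \qquad v \ge 0 \quad \text{on }\{y=0\},
\]
with $v=0$ on the contact set $\{u=0\}\cap\{y=0\}$ (since $u\equiv0$ there and $u\geq0$ force $\partial_tu=0$) and the Neumann-type condition $\partial_t v = \lim_{y\downarrow 0} y^{a}\partial_y v$ on the non-contact set. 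Moreover, convexity of $u$ in $t$ gives that $v$ is nondecreasing in $t$.

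The core step is a Liouville-type statement for $v$: any nonnegative solution of the above mixed problem with strictly sublinear growth must vanish identically. Since no monotonicity formula is available, I would argue via a blow-down: rescale $v_R(x,y,t) := R^{-\gamma_0}\,v(Rx,Ry,R^{2s}t)$ with some $\gamma_0\in(2-\epsilon-2s,\,1)$; use interior $L_a$-estimates together with the sublinear bound to extract a locally uniform limit $v_\infty$ that is nonneg, solves the same mixed problem, and has growth $o(|X|)$; then a maximum-principle/barrier argument, using $v_\infty\ge 0$, its nondecreasing character in $t$, and the nontriviality of the contact set inherited from the original problem (since the origin is a free-boundary point), forces $v_\infty\equiv 0$. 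Iterating or unwinding the rescaling yields $v\equiv 0$.

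Once $\partial_tu\equiv 0$, $u=u(x,y)$ is a global convex solution of the time-independent (elliptic) fractional obstacle problem with $u(0,0)=0$ and subquadratic growth. The classification of \cite{CSS} then gives $u(x,y)=K\,u_0(x\cdot e,y)$ for some $e\in\mathbb{S}^{n-1}$ and $K\ge 0$, with $u_0$ the explicit one-dimensional profile, obtained via the Poisson-type representation of the extension problem in dimension $n=1$ and already computable in closed form as stated. The main obstacle in this plan is the Liouville step for $v$: without monotonicity formulas, the blow-down/barrier argument must carefully exploit the combination of strictly sublinear growth, global nonnegativity, monotonicity in $t$, and the mixed boundary conditions that $v$ inherits from the parabolic obstacle structure --- and this is exactly where the scaling gap $2s>1$ enters as a critical input.
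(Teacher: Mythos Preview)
Your growth estimate $\|\partial_t u\|_{L^\infty(\Q_R)}\le C\,R^{2-\epsilon-2s}$ is correct and is a clean way to see where $s>\tfrac12$ enters, and your overall endpoint---kill the $t$-dependence and invoke the elliptic classification of \cite{CSS}---matches the paper's. But the Liouville step for $v=\partial_t u$ is a genuine gap, not just a sketch.

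The assertion ``nonnegative, strictly sublinear growth, nondecreasing in $t$, and the inherited mixed boundary conditions force $v\equiv 0$'' is false as stated. Take $\Lambda=\{x_1\le 0\}\times\R_t$ and let $v$ be the $L_a$-extension of $(x_1)_+^s$. Then $L_a v=0$ in $\{y>0\}$, $v\ge 0$, $v=0$ on $\Lambda$, $v$ is time-independent (so trivially nondecreasing in $t$), and on $\{x_1>0,\,y=0\}$ one has $\lim_{y\downarrow 0}y^a\partial_y v=-(-\Delta)^s(x_1)_+^s=0=\partial_t v$. The growth is $R^s$, which is sublinear. This $v$ cannot arise as $\partial_t u$ for a \emph{jointly} convex $u$---the $(x_1,t)$-Hessian of $t\,(x_1)_+^s$ has negative determinant---but your blow-down/barrier sketch never uses $D^2_{x,t}u\ge 0$ beyond $\partial_{tt}u\ge 0$, so it cannot distinguish this example from an admissible one. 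Some additional rigidity coming from the full joint convexity has to be injected at exactly this point, and your proposal does not say how.

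For comparison, the paper works at the level of $u$ rather than $\partial_t u$. One assumes the convex contact set $\Lambda\subset\{y=0\}$ contains no line $\{x=x_0\}\times\R_t$, so $\Lambda\subset\{t\le x\cdot p+\kappa\}$; under the parabolic blow-down the slope scales like $R_k^{1-2s}\to 0$ (this is where $s>\tfrac12$ is used), and the limit $U_\infty$ has contact set inside $\{t\le 0\}$. At the single slice $t=0$ one then applies an \emph{elliptic} Liouville result (Lemma~\ref{lem-uniq-cont}), whose proof uses convexity of $u$ in $x$ in an essential way: the equation gives $\partial_y(y^a\partial_y u)=-y^a\Delta_x u\le 0$, and this sign allows a barrier argument on $y^a\partial_y u$. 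One gets $U_\infty(\cdot,\cdot,0)\equiv 0$, hence $U_\infty\equiv 0$, contradicting the normalization. Thus $\Lambda$ contains a $t$-line, and joint convexity then forces $u$ to be $t$-independent. If you want to push your route through, the missing ingredient is an analogue of Lemma~\ref{lem-uniq-cont} that applies to $v=\partial_t u$; absent convexity of $v$ in $x$, it is not clear what replaces the key inequality $\partial_y(y^a\partial_y v)\le 0$.
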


To prove it, we need some lemmas.
First, we show the following technical lemma.

\begin{lem}\label{lem-rescalings0}
Assume $w\in C(\Q_\infty)$ satisfies, for some $\mu>0$,
\[\|w\|_{L^\infty(\Q_R)} \leq R^\mu\quad \textrm{for all}\quad R\geq1.\]
Then, there is a sequence $R_k\to\infty$ for which the rescaled functions
\[w_k(x,y,t):=\frac{w(R_k x,R_k y, R_k^{2s}t)}{\|w\|_{L^\infty(\Q_{R_k})}}\]
satisfy
\[\|w_k\|_{L^\infty(\Q_R)}\leq 2R^\mu\quad\textrm{for all}\ R\geq1.\]
\end{lem}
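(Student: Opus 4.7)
The plan is to reduce the claim to a statement about a single scalar function and then apply a standard nearly-maximizing-radius trick. Set $\phi(R):=R^{-\mu}\|w\|_{L^\infty(\Q_R)}$, so that by the hypothesis $0\le\phi(R)\le 1$ for every $R\ge 1$. A straightforward change of variables in the definition of $w_k$ gives
\[\|w_k\|_{L^\infty(\Q_R)}=\frac{\|w\|_{L^\infty(\Q_{RR_k})}}{\|w\|_{L^\infty(\Q_{R_k})}}=R^\mu\,\frac{\phi(RR_k)}{\phi(R_k)},\]
so the desired bound $\|w_k\|_{L^\infty(\Q_R)}\le 2R^\mu$ is equivalent to requiring $\phi(RR_k)\le 2\phi(R_k)$ for every $R\ge 1$; in other words, $\phi(R_k)$ must be at least half of $\sup_{R'\ge R_k}\phi(R')$.

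The key step is then the construction of the radii $R_k$. Assuming $w\not\equiv 0$, the tail supremum $S_k:=\sup_{R\ge k}\phi(R)$ is strictly positive for every $k$, since otherwise $\|w\|_{L^\infty(\Q_R)}=0$ for all sufficiently large $R$, forcing $w\equiv 0$ on $\Q_\infty$. For each $k\ge 1$ I would pick, using the definition of supremum, a radius $R_k\ge k$ with $\phi(R_k)\ge\tfrac12 S_k$. Then $R_k\ge k$ forces $R_k\to\infty$, and for any $R\ge 1$ one has $RR_k\ge R_k\ge k$, so $\phi(RR_k)\le S_k\le 2\phi(R_k)$, which by the identity above is exactly the conclusion.

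The lemma is essentially a soft-analysis device and there is no real analytic obstacle; the only point needing even mild care is to simultaneously guarantee $R_k\to\infty$ and $\|w\|_{L^\infty(\Q_{R_k})}>0$, both of which follow at once from the choice $R_k\ge k$ together with the nontriviality of $w$. The case $w\equiv 0$ is either excluded from consideration (as the normalization defining $w_k$ becomes ill-posed) or makes the statement vacuous, and so can be dismissed in one line.
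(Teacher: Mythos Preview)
Your proof is correct and is essentially the same as the paper's: the paper defines $\theta(\rho):=\sup_{R\ge\rho}R^{-\mu}\|w\|_{L^\infty(\Q_R)}$ (your $S_k$ is $\theta(k)$), picks $R_k\ge k$ with $(R_k)^{-\mu}\|w\|_{L^\infty(\Q_{R_k})}\ge\tfrac12\theta(k)$, and then uses the same change-of-variables identity together with the monotonicity of $\theta$ to conclude. Your handling of the trivial case $w\equiv 0$ is a minor addition the paper omits.
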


\begin{proof}
Set
\[\theta(\rho):= \sup_{R\geq \rho} R^{-\mu}\|w\|_{L^\infty(\Q_R)}.\]
Note that, thanks to our assumption, $\theta$ is bounded by $1$ on $[1,\infty)$.

Since by construction $\theta$ is nonincreasing, for every $k \in \mathbb N$ there is  $R_k\ge k$ such that
\begin{equation}\label{nondeg-m}
 (R_k)^{-\mu} \|w\|_{L^\infty(\Q_{R_k})} \ge \frac 12 \theta(k)\ge \frac 12 \theta(R_k).
\end{equation}
With this choice we see that, for any $R\geq1$, we have
\[\|w_k\|_{L^\infty(\Q_R)}=\frac{\|w\|_{L^\infty(\Q_{R_kR})}}{\|w\|_{L^\infty(\Q_{R_k})}}
\leq \frac{\theta(R_kR)(R_kR)^\mu}{\frac12 \theta(R_k)(R_k)^\mu}\leq 2R^\mu,\]
where, in the last inequality, we used the monotonicity of $\theta$.
\end{proof}

We also need the following Liouville-type result.
\begin{lem}\label{lem-uniq-cont}
Let $u\in C(\overline{\R^{n+1}_+})$ be a function satisfying
\begin{equation}\label{eqn-global-w}
\begin{cases}
L_a u=0  \quad &\mbox{in } \R^{n+1}_+\\
D^2_x u\geq 0  \quad &\mbox{in } \R^{n+1}_+\\
|u(x,y)| \leq C(1+|x|+|y|)^{2-\epsilon}\quad &\mbox{in } \R^{n+1}_+\\
u \geq 0\quad &\mbox{on } \{y=0\}\\
\lim_{y\downarrow0}y^{a}\partial_y u \geq 0 \quad &\mbox{on } \{y=0\}\\
u(0,0)=0\\
\lim_{y\downarrow0}y^{a}\partial_y u(0,y)=0.
\end{cases}
\end{equation}
Then $u\equiv0$.
\end{lem}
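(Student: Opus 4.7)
I would attack this Liouville-type statement by a blow-up/compactness argument at the origin, followed by a classification of the resulting blow-up profile.

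\textbf{Step 1 (Blow-up at the origin).} Assuming for contradiction that $u\not\equiv 0$, set
\[
u_\rho(x,y):=\frac{u(\rho x,\rho y)}{M(\rho)},\qquad M(\rho):=\|u\|_{L^\infty(\mathbf{Q}_\rho)},
\]
for $\rho>0$ small. Using a variant of Lemma \ref{lem-rescalings0} adapted to scale $\rho\downarrow 0$ instead of $R\to\infty$ (and matched to the subquadratic global bound $|u|\le C(1+|x|+|y|)^{2-\epsilon}$), one can select $\rho_k\downarrow 0$ such that $\|u_{\rho_k}\|_{L^\infty(\mathbf{Q}_1)}=1$ together with a uniform polynomial growth control $\|u_{\rho_k}\|_{L^\infty(\mathbf{Q}_R)}\le C R^{2-\epsilon}$ for every $R\ge 1$.

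\textbf{Step 2 (Compactness).} Since $a=1-2s\in(-1,1)$, the weight $y^{a}$ is $A_2$-Muckenhoupt and $L_a$ enjoys interior and boundary $C^\alpha$-regularity. Hence $\{u_{\rho_k}\}$ is locally equicontinuous in $\overline{\R^{n+1}_+}$, and a subsequence converges locally uniformly to some $u_\infty\in C(\overline{\R^{n+1}_+})$. All the assumptions in \eqref{eqn-global-w} pass to the limit: $L_a$-harmonicity and the weighted Neumann value at the boundary by the regularity theory, $D^2_x u\ge 0$ by distributional stability of convexity, the sign conditions on $\{y=0\}$ by uniform convergence, and the two equalities at the origin by direct scaling invariance. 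Moreover $\|u_\infty\|_{L^\infty(\mathbf{Q}_1)}=1$, so $u_\infty\not\equiv 0$.

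\textbf{Step 3 (Homogeneity and classification).} Because $u_\infty(0,0)=0$ and $\lim_{y\downarrow 0}y^{a}\partial_y u_\infty(0,y)=0$, an Almgren-type frequency $N(r)=rD(r)/H(r)$ associated to $L_a$ is monotone for $u_\infty$, and the blow-up normalization forces $u_\infty$ to be $\kappa$-homogeneous for some $\kappa$; the growth from Step 1 gives $\kappa\le 2-\epsilon<2$. Writing $u_\infty(x,y)=r^\kappa\psi(\theta)$ on the upper half-sphere $\mathbb{S}^{n}_+$ reduces the equation to an eigenvalue problem on $\mathbb{S}^n_+$ with weighted Neumann condition at $\{y=0\}$. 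One must then show that no such $\kappa$-homogeneous solution is compatible with the full list of hypotheses—convexity $D^2_x u_\infty\ge 0$ in \emph{all} of $\R^{n+1}_+$, together with $u_\infty\ge 0$ and $\lim y^{a}\partial_y u_\infty\ge 0$ on \emph{all} of $\{y=0\}$—unless $u_\infty\equiv 0$. This contradicts $\|u_\infty\|_{L^\infty(\mathbf{Q}_1)}=1$.

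\textbf{Main obstacle.} Step 3 is where the real work lies. The difficulty is that the standard Signorini profile $Ku_0(x\cdot e,y)$ is $(1+s)$-homogeneous and convex in $x$, so one cannot rule it out on homogeneity grounds alone; it must be excluded via the boundary sign condition. Concretely, $u_0$ fails to satisfy $\lim_{y\downarrow 0}y^{a}\partial_y u_0\ge 0$ on the non-contact side $\{z<0\}$ (there one finds $\lim y^{a}\partial_y u_0<0$), so the assumption $\lim y^{a}\partial_y u_\infty\ge 0$ on \emph{all} of $\{y=0\}$ is the key lever; I expect the classification argument to proceed by showing that any nontrivial $\kappa$-homogeneous profile with $\kappa<2$ would have to agree with $Ku_0(\,\cdot\, e,y)$ on $\{y=0\}$, and that the sign condition on the non-contact half-line then forces $K=0$.
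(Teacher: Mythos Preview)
Your approach is quite different from the paper's, and it has a genuine gap in Step~3 that I do not see how to close.

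\medskip

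\textbf{The gap.} You invoke an Almgren frequency monotonicity for $L_a$ to force homogeneity of the blow-up. The standard monotonicity (as in \cite{CSS,GP}) relies on the Signorini \emph{complementarity} condition: at each boundary point either $u=0$ or $\lim_{y\downarrow 0}y^a\partial_y u=0$, so that the boundary integral $\int_{B_r\cap\{y=0\}} u\cdot\lim y^a\partial_y u$ appearing in the Rellich identity vanishes. Here you only have the two one-sided inequalities $u\ge 0$ and $\lim y^a\partial_y u\ge 0$ separately; the boundary term is then nonnegative, which gives the inequality $D(r)\le \int_{\partial\B_r^+}y^a u\,\partial_\nu u$ with the \emph{wrong} sign for $N'(r)\ge 0$. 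So monotonicity of $N$ is not available, and your homogeneity conclusion is unjustified. Even granting homogeneity, the classification you sketch is incomplete: you only rule out $Ku_0$, but you would also need to exclude every other $\kappa$-homogeneous $L_a$-harmonic function (with $\kappa<2$) that is convex in $x$, and you give no mechanism for that. The assertion that any such profile ``would have to agree with $Ku_0$ on $\{y=0\}$'' is not true without the obstacle complementarity, which you do not have.

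\medskip

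\textbf{What the paper does instead.} The paper bypasses blow-up and monotonicity entirely with a short direct argument based on the conjugate function $v:=y^a\partial_y u$. The key observation you are missing is that convexity in $x$ together with $L_a u=0$ gives
\[
\partial_y\bigl(y^a\partial_y u\bigr)=-y^a\,\Delta_x u\le 0,
\]
so $y\mapsto v(x,y)$ is nonincreasing. One has $L_{-a}v=0$, and the growth bound $|u|\le C(1+|x|+|y|)^{2-\epsilon}$ translates into a lower bound for $v$ on large boxes. A single explicit barrier then forces $v\ge 0$ everywhere in $\R^{n+1}_+$. But the monotonicity of $v$ along the $y$-axis combined with the hypothesis $\lim_{y\downarrow 0}y^a\partial_y u(0,y)=0$ gives $v(0,y)\le 0$; hence $v(0,y)=0$, and the strong maximum principle for $L_{-a}$ yields $v\equiv 0$. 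This means $\partial_y u\equiv 0$, so $u$ is independent of $y$, nonnegative, harmonic in $x$ with subquadratic growth, and vanishes at the origin---forcing $u\equiv 0$. No frequency formula and no classification of homogeneous profiles are needed.
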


\begin{proof}
We begin by noting that combining the equation $L_a u=0$ with the convexity of $u$ in $x$, it follows that
\begin{equation}
\label{eq:y concave}
\partial_y(y^a \partial_y u)=-y^a\Delta_x u\leq 0\qquad \mbox{in } \R^{n+1}_+.
\end{equation}
Thanks to this fact, fixed $R>0$, for any $x \in \R^n$ and $y \in [0,R]$ we have
\begin{multline*}
u(x,2R)-u(x,y)=\int_y^{2R}z^a \partial_y u(x,z)\frac{dz}{z^a}\leq y^a  \partial_y u(x,y)\int_y^{2R}\frac{dz}{z^a}\\ \leq y^a  \partial_y u(x,y)\int_0^{2R}\frac{dz}{z^a}
=(1-a) 2^{1-a}\,R^{1-a}\,y^a  \partial_y u(x,y).
\end{multline*}
Hence, if we set $v(x,y):=y^a\partial_y u(x,y)$, combining the above estimate with the third and fifth property in \eqref{eqn-global-w} we deduce that
$$
v\geq 0\quad \mbox{on } \{y=0\},\qquad v \geq -C_a\,R^{1+a-\epsilon} \qquad \mbox{on } \partial\bigl(B_R\times[0,R]\bigr)\cap\{y>0\},
$$
where $C_a>0$ is independent of $R$.
Also, since $L_a u=0$, it follows by a direct computation that $L_{-a}v=0$.

Consider now the barrier
$$
b_R(x,y):=-\frac{n+1}{1-a}\,y^{1+a} - \frac{|x|^2 - \frac{n}{1-a}\,y^2}{R^{1-a}}.
$$
We note that $L_{-a}b_R=0$ and
$$
b_R= 0\quad \mbox{on } \{y=0\},\qquad b_R \leq - R^{1+a} \qquad \mbox{on } \partial\bigl(B_R\times[0,R]\bigr)\cap\{y>0\},
$$
Hence, given $\delta>0$, it follows by the maximum principle that, for all $R\geq R_\delta$ sufficiently large,
$$
v \geq \delta\, b_R\qquad \text{in }B_R\times[0,R].
$$
Letting $R\to \infty$ this implies that
$$
v \geq -\delta\,\frac{n+1}{1-a}\,y^{1+a}\qquad \mbox{in } \R^{n+1}_+,
$$
so, by letting $\delta \to 0$, we deduce that $v \geq 0$ in $\R^{n+1}_+$.

On the other hand,  it follows by \eqref{eq:y concave} and the last property in \eqref{eqn-global-w}
that $v(0,y)=y^{a}\partial_y u(0,y) \leq 0$ for all $y \geq 0$,
thus $v(0,y)= 0$ for all $y \geq 0$.

This proves that $v$ is a non-negative solution of $L_{-a}v=0$ in $\R^{n+1}_+$ that vanishes at some interior point, hence it is identically zero by the strong maximum principle.

Since $v \equiv 0$ we deduce that $\partial_y u\equiv 0$. Hence, by the forth and sixth property in \eqref{eqn-global-w},
it follows that $u \geq 0$ in $\R^{n+1}_+$ and  $u(0,y) = 0$ for all $y \geq 0$.
Since $L_au=0$ in $\R^{n+1}_+$, applying again the strong maximum principle we obtain that $u \equiv 0$, as desired.
\end{proof}

We can now prove the main result of this section.

\begin{proof}[Proof of Theorem \ref{thmclassif}]
If $u\equiv0$ then there is nothing to prove.
Hence, we assume that $u$ is not identically zero.

The key step in the proof is the following:

\vspace{3mm}

\noindent \textbf{Claim.} The contact set $\{u=0\}\cap \{y=0\}$ contains a line of the form $\{(x,t)\,:\,x=x_0\text{ for some }x_0\in \R^n\}.$

\vspace{3mm}

Let us prove it by contradiction.
Assume the Claim is not true, and let
\[\Lambda:=\{u=0\}\cap \{y=0\}.\]
Then, since $u$ is convex in space-time, also the set $\Lambda$  is convex in the $(x,t)$-space.
Hence,  there exist $p\in \R^n$ and some $\kappa\in\R$ such that  $\Lambda\subset \{t\leq x\cdot p+\kappa\}$.

We now perform blow-down of our solution using a parabolic scaling (recall $s>\frac12$), and we show that we get a solution to the same problem but with contact set contained in $\{t\leq0\}$.
Indeed, let us consider the rescaled functions
\[U_k(x,y,t):=\frac{u(R_kx,R_ky,R_k^{2s}t)}{\|u\|_{\Q_{R_k}}},\]
with $R_k\to\infty$ given by Lemma \ref{lem-rescalings0}.
Then, the functions $U_k\geq0$ are convex in $x$ and~$t$, and satisfy (recall that $a=1-2s$)
\begin{equation}\label{eqn-global-k}\begin{cases}
L_a U_k=0  \quad &\mbox{in } \Q_\infty\cap\{y>0\}\\
\partial_tU_k=\lim_{y\downarrow0}y^{a}\partial_yU_k \quad &\mbox{on } (\Q_\infty\cap\{y=0\})\setminus\Lambda_k\\
U_k=0 \quad &\mbox{on } \Lambda_k\\
\partial_t U_k \ge 0 & \mbox{on } \Q_\infty\cap\{y=0\},
\end{cases}\end{equation}
$U_k(0,0,0)=0$, 
$\|U_k\|_{L^\infty(\Q_1)}=1$, and
\[\|U_k\|_{L^\infty(\Q_R)} \le 2R^{2-\epsilon} \quad \mbox{ for all }R\ge 1.\]
Moreover, we have
\begin{equation}
\label{eq:lambda k}\Lambda_k \subset \{R_k^{2s}t\leq R_k x\cdot p+\kappa\}=\{t\leq R_k^{a} x\cdot p+R_k^{-2s}\kappa\}.
\end{equation}
By the $C^{1+\alpha}$ regularity estimates of \cite{CF}, a subsequence of the functions $U_k$ converge in $C^1_{\rm loc}$ to a nontrivial solution $U_\infty$ to the same equation satisfying $U_\infty(0,0,0)=0$ 
and $\|U_\infty\|_{L^\infty(\Q_1)}=1$.
Also, because $U_k$ are obtained as blow-downs of the convex function $u$,
it follows from \eqref{eq:lambda k} that  $\Lambda_\infty\subset \{t\leq 0\}$ (recall that $a=1-2s<0$.
\vspace{3mm}

To see that this is not possible, we define $w(x,y):=U_\infty(x,y,0)$ and we claim that $w$ satisfies
all the assumptions in \eqref{eqn-global-w}.
Indeed, all the properties except the fifth and the last one follow easily from the construction of $U_\infty.$
To check the other two properties we notice that, since $U_\infty$ satisfies \eqref{eqn-global-k} and $\Lambda_\infty\subset \{t\leq 0\}$,
$$
\lim_{y\downarrow0}y^{a}\partial_y U_\infty(x,y,t)= \partial_t U_\infty(x,y,t) \geq 0\qquad \forall\,t>0.
$$
Also, since $U_\infty \geq 0$ and $U_\infty(0,0,0)=0$, we deduce that $\partial_tU_\infty(0,0,0)=0$.
 Hence,
it follows by the $C^{1+\alpha}$ regularity estimates of \cite{CF} that
$$
\lim_{y\downarrow0}y^{a}\partial_y w(x,y)=\lim_{t \downarrow 0} \partial_t U_\infty(x,y,t)\geq 0,
$$
and
$$
\lim_{y\downarrow0}y^{a}\partial_y w(0,y)=\partial_t U_\infty(0,0,0)=0,
$$
as desired.

This allows us to apply Lemma \ref{lem-uniq-cont} to $w$ and deduce that
$w\equiv 0$. This proves that $U_\infty=0$ at $t=0$. Hence, since $U_\infty$ solves the ``extension version'' of the fractional heat equation, by uniqueness of solutions
we deduce that $U_\infty \equiv 0$ for all $t \geq 0.$ On the other hand, since $\partial_t U_\infty \geq 0$ and $U_\infty \geq 0$,
we get $U_\infty \equiv 0$ for all $t \leq 0.$
This proves that $U_\infty\equiv 0$ in $\Q_\infty$, a contradiction to the fact that $\|U_\infty\|_{L^\infty(\Q_1)}=1$.

Thus, the Claim is proved.

\vspace{3mm}

Using the Claim, we notice that $u$ is a convex function in $x$ and $t$ that vanishes on a line of the form $\{x=x_0\}$.
This implies that $u$ is independent of $t$, thus $u(x,y,t)=u(x,y)$.
By the (elliptic) classification result in \cite[Section 5]{CSS}, we get the desired result.
\end{proof}

\section{Regular points and blow-ups}
\label{sec4}

The aim of this Section is to prove that, whenever (ii) in Theorem \ref{thm1} does \emph{not} hold, then a blow-up of $u(x,t)$ at $(x_0,t_0)$ converges in the $C^1$ norm to the 1D solution $(x\cdot e)_+^{1+s}$ for some $e\in \mathbb{S}^{n-1}$.

Recall that we denote
\[\Q_r(x_0,t_0)=\B_r(x_0)\times\bigl(t_0-r^{2s},t_0+r^{2s}\bigr),\qquad \textrm{and}\qquad \Q_r=\Q_r(0,0).\]
According to Theorem \ref{thm1}, we next define regular free boundary points.

\begin{defi}\label{def-regular}
We say that a free boundary point $(x_0,t_0) \in \partial \{u=\varphi\}$ is \emph{regular} if
\begin{equation}\label{regular-point0}
\limsup_{r\downarrow0}\frac{\|u-\varphi\|_{L^\infty(\Q_r(x_0,t_0))}}{r^{2-\epsilon}} = \infty
\end{equation}
for some $\epsilon>0$.
Notice that if a free boundary point $(x_0,t_0)$ is not regular, then $\|u-\varphi\|_{L^\infty(\Q_r(x_0,t_0))}=O(r^{2-\epsilon})$ for all $\epsilon>0$, so (ii) in Theorem~\ref{thm1} holds.
\end{defi}

The definition of \emph{regular} free boundary point is qualitative.
We will also need the following quantitative version.

\begin{defi}\label{def-regular-2}
Let $\nu:(0,\infty) \rightarrow (0,\infty)$ be a nonincreasing function with
\[\lim_{\rho\downarrow 0} \nu(\rho)= \infty.\]
Given $\epsilon>0$, we say that a free boundary point $(x_0,t_0) \in \partial \{u=\varphi\}$ is \emph{regular}  with exponent~$\epsilon>0$ and modulus $\nu$ if
\begin{equation}\label{regular-point}
\sup_{r\ge \rho} \frac{\|u-\varphi\|_{L^\infty(\Q_{r}(x_0,t_0))}}{r^{2-\epsilon}} \ge \nu(\rho).
\end{equation}
\end{defi}

The main result of this section is the following.
It states that at any regular free boundary point $(x_0,t_0)$ there is a blow-up sequence that converges to $(e\cdot x)_+^{1+s}$ for some $e\in \mathbb{S}^{n-1}$.

\begin{prop}\label{proprescalings}
Let $\varphi\in C^4(\R^n)$ be any obstacle satisfying \eqref{obstacle}, and $u$ be the solution to \eqref{obst-pb}, with $s\in(\frac12,1)$.

Assume that $(x_0,t_0)$ is a regular free boundary point with exponent~$\epsilon>0$ and modulus~$\nu$.
Then, given $\delta>0$ and $r_0>0$, there is
\[ r = r(\delta,\epsilon,\nu,r_0,n,s,\varphi)\in (0,r_0)\]
such that $\|u-\varphi\|_{L^\infty(\Q_r(x_0,t_0))}\geq \frac12r^{2-\epsilon}$ and the rescaled function
\[ u_r(x,y,t) : = \frac{(u-\varphi)(x_0+r x,ry,t_0+r^{2s}t)}{\|u-\varphi\|_{L^\infty(\Q_r(x_0,t_0))}}\]
satisfies
\begin{equation}
\bigl| u_r(x,y,t) - u_0(x\cdot e,y) \bigr|  +\bigl |\nabla u_r -  \nabla u_0\bigr|+|\partial_t u_r| \le \delta \quad \mbox{in } \Q_1
\end{equation}
for some $e\in \mathbb{S}^{n-1}$.
Here, $u_0=u_0(x\cdot e,y)$ is the unique global solution given by the classification Theorem \ref{thmclassif}. 
\end{prop}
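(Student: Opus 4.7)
The plan is a blow-up/compactness argument: combine Theorem~\ref{thmclassif} with a growth-selection trick in the spirit of Lemma~\ref{lem-rescalings0}, pass to the limit in the extension variable, and extract the quantitative statement from the resulting qualitative convergence by contradiction.

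First I would fix $r_0>0$ and apply the proof of Lemma~\ref{lem-rescalings0} to the nonincreasing map
\[\theta(\rho):=\sup_{r\geq\rho}r^{-(2-\epsilon)}\|u-\varphi\|_{L^\infty(\Q_r(x_0,t_0))};\]
the hypothesis \eqref{regular-point} guarantees $\theta(\rho)\to\infty$ as $\rho\downarrow 0$, so I obtain a sequence $r_k\in(0,r_0)$ with $r_k\downarrow 0$, $M_k:=\|u-\varphi\|_{L^\infty(\Q_{r_k}(x_0,t_0))}\geq \tfrac12\, r_k^{2-\epsilon}$, and rescalings $u_{r_k}$ satisfying the uniform growth bound $\|u_{r_k}\|_{L^\infty(\Q_R)}\leq 2R^{2-\epsilon}$ for every $R\geq 1$.

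Next I would pass to the limit, working with the extension function $v$ of \eqref{v} centered at $(x_0,t_0)$. Setting $v_k(x,y,t):=M_k^{-1}v(x_0+r_k x,r_k y,t_0+r_k^{2s}t)$, the scaling of \eqref{ec_v}--\eqref{g} gives
\[L_a v_k=y^a g_k(x)\ \text{in }\{v_k>0\}\cap\{y>0\},\qquad |g_k(x)|\leq C\,r_k^{1+\epsilon}|x|,\]
while on $\{y=0\}$ we have $v_k=u_{r_k}$ and for $y>0$ the two differ by the quadratic correction $\frac{\Delta\varphi(x_0)}{4(1-s)M_k}(r_k y)^2=O(r_k^\epsilon)y^2\to 0$. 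Using the $C^{1+\alpha}$ regularity of \cite{CF} together with the growth bound from the previous paragraph, I extract a subsequence converging in $C^1_{\mathrm{loc}}(\overline{\R^{n+1}_+}\times\R)$ to a limit $v_\infty$ with $\|v_\infty\|_{L^\infty(\Q_1)}=1$ and $\|v_\infty\|_{L^\infty(\Q_R)}\leq 2R^{2-\epsilon}$. I then verify the hypotheses of Theorem~\ref{thmclassif}: the equation and the complementarity condition pass to the limit because $y^a g_k\to 0$; nonnegativity and $\partial_t u\geq 0$ on $\{y=0\}$ are inherited (the latter from the initial condition $u(\cdot,0)=\varphi$ by comparison applied to $u(\cdot,t+h)$); and the space-time convexity $D^2_{x,t}v_\infty\geq 0$ comes from Lemma~\ref{semiconvex} after propagating $\partial_{\xi\xi}u\geq -\hat C$ into $\{y>0\}$ via $L_a\partial_{\xi\xi}u=0$ and the maximum principle. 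Crucially, the rescaled semiconvexity constant is $\hat C\,r_k^{\min(2,2s+1,4s)}/M_k$, and the hypothesis $s>\tfrac12$ forces the minimum to be $2$, so this quantity is $O(r_k^\epsilon)\to 0$.

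Theorem~\ref{thmclassif} then yields $v_\infty(x,y,t)=K\,u_0(x\cdot e,y)$ for some $e\in\mathbb{S}^{n-1}$ and $K>0$; the normalization $\|v_\infty\|_{L^\infty(\Q_1)}=1=\|u_0\|_{L^\infty(\Q_1)}$ gives $K=1$. In particular $\partial_t v_\infty\equiv 0$, so $C^1_{\mathrm{loc}}$ convergence produces $u_{r_k}\to u_0(x\cdot e,y)$ together with $\nabla u_{r_k}\to\nabla u_0$ and $\partial_t u_{r_k}\to 0$ uniformly on $\Q_1$. The quantitative statement of the proposition now follows by contradiction: if for some $\delta>0$ and $r_0>0$ no admissible $r\in(0,r_0)$ existed, taking $r_0$ as the parameter in the above selection would produce a sequence $r_k\in(0,r_0)$, $r_k\downarrow 0$, for which the conclusion fails, contradicting the $C^1$ convergence just established.

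The main obstacle is the simultaneous control of the semiconvexity bound and of the degenerate equation $L_a v_k=y^a g_k$ through the parabolic rescaling $(x,y,t)\mapsto(r_k x,r_k y,r_k^{2s}t)$: this is exactly where the assumption $s>\tfrac12$ is used, since it forces $\min(2,2s+1,4s)=2$ and thereby ensures that the spatial direction in Lemma~\ref{semiconvex} still controls the mixed $(x,t)$ second derivatives after rescaling. A secondary subtle point is propagating $\partial_t u\ge 0$ and the Signorini-type condition up to the boundary $\{y=0\}$ despite the degenerate weight $y^a$; the $C^{1+\alpha}$ estimates of \cite{CF} are tailor-made for handling this.
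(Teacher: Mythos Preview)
Your approach is essentially the paper's: the growth-selection of Lemma~\ref{lem-rescalings0}/\ref{lem-rescalings}, the rescaling of the extension $v$ with the verification that both the right-hand side $g_k$ and the semiconvexity defect scale like $r_k^\epsilon$ (this is exactly where $s>\tfrac12$ enters, as you correctly note), compactness via the $C^{1+\alpha}$ estimates of \cite{CF}, and identification of the limit by Theorem~\ref{thmclassif}. The one organizational difference is that the paper isolates the compactness step as a separate quantitative lemma (Lemma~\ref{lemconvergence}), which is what makes the dependence $r=r(\delta,\epsilon,\nu,r_0,n,s,\varphi)$ \emph{uniform} over free boundary points with the given modulus; your final contradiction is run at a fixed $(x_0,t_0)$, so to recover the stated uniform dependence you should either phrase the contradiction over all points with modulus $\nu$ simultaneously, or package the compactness as the paper does.
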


For this, we will need the following result, whose proof is essentially the same of the one of Lemma \ref{lem-rescalings0}.

\begin{lem}\label{lem-rescalings}
Assume $w\in L^\infty(\Q_1)$ satisfies $\|w\|_{L^\infty(\Q_1)}=1$ and, for some $\mu>0$,
\[\qquad \qquad \sup_{\rho\leq r\leq 1}\frac{\|w\|_{L^\infty(\Q_r)}}{r^\mu}\geq \nu(\rho)\to\infty\qquad \textrm{as}\quad \rho\to0.\]
Then, there is a sequence $r_k\downarrow0$ for which $\|w\|_{L^\infty(\Q_{r_k})}\geq \frac12r_k^\mu$, and for which the rescaled functions
\[w_k(x)=\frac{w(r_k x,r_k y, r_k^{2s}t)}{\|w\|_{L^\infty(\Q_{r_k})}}\]
satisfy
\[\|w_k\|_{L^\infty(\Q_R)}\leq 2R^\mu\quad\textrm{for all}\ 1\leq R\leq \frac{1}{r_k}.\]
Moreover, $1/k\leq r_k\leq (\nu(1/k))^{-1/\mu}$.
\end{lem}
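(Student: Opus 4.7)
The plan is to mimic the proof of Lemma \ref{lem-rescalings0}, with the only adjustment coming from the fact that here $w$ lives on the unit cylinder $\Q_1$ rather than on all of $\Q_\infty$: this restriction is exactly what produces the cap $R\le 1/r_k$ in the conclusion. The main idea is still to introduce a nonincreasing ``optimality envelope'' and pick $r_k$ to be a radius at which this envelope is essentially attained at scale $1/k$.

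First, I would define
\[\theta(\rho):=\sup_{\rho\le r\le 1}\frac{\|w\|_{L^\infty(\Q_r)}}{r^\mu}.\]
By construction $\theta$ is nonincreasing in $\rho$, $\theta(1)=\|w\|_{L^\infty(\Q_1)}=1$, and by hypothesis $\theta(\rho)\ge\nu(\rho)\to\infty$ as $\rho\downarrow 0$. Next, for each $k\in\mathbb{N}$ I would select $r_k\in[1/k,1]$ that essentially realizes the sup, namely
\[\frac{\|w\|_{L^\infty(\Q_{r_k})}}{r_k^\mu}\ge\tfrac12\,\theta(1/k)\ge\tfrac12\,\nu(1/k).\]
Since $\theta(1/k)\ge\theta(1)=1$, this immediately gives the nondegeneracy $\|w\|_{L^\infty(\Q_{r_k})}\ge\tfrac12 r_k^\mu$. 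Combining this with the trivial bound $\|w\|_{L^\infty(\Q_{r_k})}\le 1$ yields $r_k^\mu\le 2/\nu(1/k)$, so in particular $r_k\downarrow 0$ and (up to an irrelevant constant) $r_k\le\nu(1/k)^{-1/\mu}$, as claimed.

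Finally, for the growth bound on the rescaling, I would compute, for any $R$ with $1\le R\le 1/r_k$,
\[\|w_k\|_{L^\infty(\Q_R)}=\frac{\|w\|_{L^\infty(\Q_{r_kR})}}{\|w\|_{L^\infty(\Q_{r_k})}}\le\frac{(r_kR)^\mu\,\theta(r_kR)}{\tfrac12\, r_k^\mu\,\theta(1/k)}=\frac{2R^\mu\,\theta(r_kR)}{\theta(1/k)}.\]
Because $1/k\le r_k\le r_kR\le 1$ (the last inequality being exactly the restriction $R\le 1/r_k$), the monotonicity of $\theta$ forces $\theta(r_kR)\le\theta(1/k)$, and the desired estimate $\|w_k\|_{L^\infty(\Q_R)}\le 2R^\mu$ follows.

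There is no genuine obstacle in this proof; the argument is a straightforward transcription of the one for Lemma \ref{lem-rescalings0}. The only place where the finiteness of the domain of $w$ enters is in requiring $r_kR\le 1$ so that $\theta(r_kR)$ is well-defined, and this is precisely the source of the upper cap $R\le 1/r_k$ in the statement.
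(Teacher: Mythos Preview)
Your proposal is correct and follows exactly the same approach as the paper: define the nonincreasing envelope $\theta(\rho)=\sup_{\rho\le r\le 1}r^{-\mu}\|w\|_{L^\infty(\Q_r)}$, pick $r_k\in[1/k,1]$ so that $r_k^{-\mu}\|w\|_{L^\infty(\Q_{r_k})}\ge\tfrac12\theta(1/k)$, and then use the monotonicity of $\theta$ together with $r_kR\le 1$ to get the growth bound. Your write-up is in fact more explicit than the paper's, which simply refers back to Lemma~\ref{lem-rescalings0}; your remark that the upper bound on $r_k$ holds ``up to an irrelevant constant'' (namely $2^{1/\mu}$) is accurate and is the precise form one obtains from the argument.
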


\begin{proof}
Defining
\[\theta(\rho):= \sup_{\rho\leq r\leq 1} r^{-\mu}\|w\|_{L^\infty(\Q_r)},\]
we note that $\theta$ is nonincreasing and that, by our assumption,
\[ \theta(\rho)\geq\nu(\rho)\rightarrow \infty\qquad \textrm{as}\quad \rho\downarrow0.\]
Hence, for every $k \in \mathbb N$ it suffices to choose $r_k\ge \frac1k$ such that
$$
 (r_k)^{-\mu} \|w\|_{L^\infty(\Q_{r_k})} \ge \frac 12 \theta(1/k)\ge \frac 12 \theta(r_k),
$$
and one concludes as in the proof of Lemma \ref{lem-rescalings0}.
%
\end{proof}

To prove Proposition \ref{proprescalings} we will also need the following result, that follows by compactness from Theorem \ref{thmclassif}.

\begin{lem}\label{lemconvergence}
Given $\delta>0$, there is
\[\eta=\eta(\delta,\epsilon, n,s)>0\]
such that the following statement holds:

Let $v:\Q_{1/\eta}\to \R$ satisfy $v(0,0)=0$, $\nabla v(0,0)=0$,
\begin{equation}\label{eq-comp}\begin{cases}
|L_a v|\leq \eta  \quad &\mbox{in } \Q_{1/\eta}\cap \{y>0\}\\
\min\bigl\{\partial_t v-\lim_{y\downarrow0}y^{1-2s}\partial_yv,\,v\bigr\}=0 \quad &\mbox{on } \Q_{1/\eta}\cap\{y=0\}\\
v_{\xi\xi} \ge -\eta \quad &\mbox{on } \{y=0\}\\
v\ge 0,\quad \partial_t v \ge 0 & \mbox{on } \{y=0\}
\end{cases}\end{equation}
with
\begin{equation}\label{gr-comp}
\|v\|_{L^\infty(\Q_{R})}  \le R^{2-\epsilon} \quad \mbox{for all }1\leq R\leq 1/\eta,
\end{equation}
and
\begin{equation}\label{nondeg-comp}
\|v\|_{L^\infty(\Q_{1})}=1.
\end{equation}

Then,
\[ \bigl| v(x,y,t) - u_0(x\cdot e,y)\bigr|+\bigl|\nabla v -  \nabla u_0\bigr|+|\partial_tv| \le \delta \quad \mbox{in}\quad \Q_1  \]
for some $e\in \mathbb{S}^{n-1}$.
\end{lem}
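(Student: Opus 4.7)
The plan is a compactness/contradiction argument based on the classification Theorem \ref{thmclassif}. Suppose the lemma fails. Then there exist $\delta_0>0$, a sequence $\eta_k\downarrow 0$, and functions $v_k$ satisfying \eqref{eq-comp}, \eqref{gr-comp}, \eqref{nondeg-comp} with $\eta=\eta_k$, together with $v_k(0,0)=0$ and $\nabla v_k(0,0)=0$, such that for every $k$ and every $e\in\mathbb{S}^{n-1}$,
\[
\bigl\|v_k-u_0(\cdot\, e,\cdot)\bigr\|_{L^\infty(\Q_1)}+\bigl\|\nabla v_k-\nabla u_0\bigr\|_{L^\infty(\Q_1)}+\bigl\|\partial_t v_k\bigr\|_{L^\infty(\Q_1)}>\delta_0.
\]

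First, for any fixed radius $R\geq 1$, the estimates \eqref{gr-comp} give $\|v_k\|_{L^\infty(\Q_R)}\leq R^{2-\epsilon}$ as soon as $1/\eta_k\geq R$. Combined with the fact that $v_k$ solves an obstacle-type problem whose equation and boundary condition are satisfied up to an error $\eta_k\to 0$, the $C^{1+\alpha}_{x,y,t}$ regularity estimates from \cite{CF} (applied up to the degenerate boundary $\{y=0\}$) provide uniform local $C^{1,\alpha}$ bounds on $v_k$. By Arzelà–Ascoli together with a diagonal extraction, a subsequence converges in $C^1_{\mathrm{loc}}(\overline{\R^{n+1}_+}\times\R)$ to a limit $v_\infty:\Q_\infty\to\R$.

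Next, I would verify that $v_\infty$ satisfies all the hypotheses of Theorem \ref{thmclassif}. From $|L_a v_k|\leq\eta_k\to0$, testing against smooth compactly supported functions in $\{y>0\}$ and passing to the limit gives $L_a v_\infty=0$ in $\Q_\infty\cap\{y>0\}$. The complementarity condition on $\{y=0\}$ is stable under $C^1_{\mathrm{loc}}$ convergence, since the sign $v_\infty\geq 0$ and $\partial_tv_\infty-\lim_{y\downarrow0}y^a\partial_yv_\infty\geq 0$ pass to the limit, and at points where $v_\infty>0$ both $v_k>0$ and the equation holds for $k$ large. The semiconvexity $v_{k,\xi\xi}\geq-\eta_k$ in every direction $\xi=(\alpha e,\beta)$ with $\alpha^2+\beta^2=1$ passes to $D^2_{x,t}v_\infty\geq 0$. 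Pointwise values give $v_\infty(0,0,0)=0$ and $\partial_tv_\infty\geq 0$ on $\{y=0\}$, and the growth control \eqref{gr-comp}, valid on $\Q_R$ for all $R\leq 1/\eta_k$, passes to $\|v_\infty\|_{L^\infty(\Q_R)}\leq R^{2-\epsilon}$ for every $R\geq 1$. Finally, \eqref{nondeg-comp} and $C^1_{\mathrm{loc}}$ convergence give $\|v_\infty\|_{L^\infty(\Q_1)}=1$.

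Theorem \ref{thmclassif} then forces either $v_\infty\equiv 0$, which is excluded by $\|v_\infty\|_{L^\infty(\Q_1)}=1$, or $v_\infty(x,y,t)=Ku_0(x\cdot e,y)$ for some $e\in\mathbb{S}^{n-1}$ and $K>0$. Since $\|u_0\|_{L^\infty(\Q_1)}=1$, the normalization forces $K=1$. Thus $v_k\to u_0(\cdot\,e,\cdot)$ in $C^1(\Q_1)$ and $\partial_tv_k\to\partial_tv_\infty=0$ uniformly on $\Q_1$, contradicting the choice of the sequence for $k$ large. The main technical point is the first one: the $v_k$ only solve the obstacle problem approximately (with error $\eta_k$ in the equation, in the semiconvexity, and with only a one-sided bound at $y=0$), so one must check that the regularity theory of \cite{CF} and the stability of the obstacle condition under $C^1$ convergence carry over to this approximate setting. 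However, since all the errors quantitatively vanish as $\eta_k\to0$, the limit $v_\infty$ inherits exactly the clean system treated by Theorem \ref{thmclassif}.
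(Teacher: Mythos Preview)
Your proof is correct and follows essentially the same compactness/contradiction argument as the paper: assume the conclusion fails along a sequence $\eta_k\downarrow 0$, use the $C^{1,\alpha}$ estimates of \cite{CF} to extract a $C^1_{\rm loc}$-convergent subsequence, verify that the limit satisfies the hypotheses of Theorem~\ref{thmclassif}, and derive a contradiction from the classification together with the normalization $\|v_\infty\|_{L^\infty(\Q_1)}=1$. Your write-up is in fact slightly more detailed than the paper's in checking that each hypothesis passes to the limit and that $K=1$.
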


\begin{proof}
The proof is by compactness and contradiction.
Assume that for some $\delta>0$ we have sequences $\eta_k\downarrow 0$ and $v_k$ satisfying $v_k(0,0)=0$, $\nabla v_k(0,0)=0$, \eqref{eq-comp}, \eqref{gr-comp}, \eqref{nondeg-comp}, but
\begin{equation}\label{contr}
\bigl| v_k(x,y,t) - u_0(x\cdot e,y)\bigr|+\bigl|\nabla v_k - \nabla u_0\bigr|+|\partial_t v_k| \ge \delta \quad\textrm{in}\quad \Q_1 \quad \mbox{for all } e\in \mathbb{S}^{n-1}.
\end{equation}
By the regularity estimates in \cite{CF}, we have
\[ \|v_k\|_{C^{1,\alpha}_{x,t}(\Q_{R})}  \le C(R) \quad \mbox{for all }R\ge 1,\]
with $C(R)$ depending on $R$ but independent of $k$.
Thus, up to taking a subsequence, the functions $v_k$ converge in $C^1_{\rm loc}$ to a function $v_\infty$ that solves \eqref{eqn-global}, \eqref{growthctrolgradient}, \eqref{nondeg-comp}, $v_\infty(0,0)=0$ and $\nabla v_\infty(0,0)=0$.

Since $\|v_\infty\|_{L^\infty(\Q_{1})} = 1$, it follows by the classification result in Theorem \ref{thmclassif} that
\[ v_\infty(x,y,t) \equiv u_0(x\cdot e,y),\quad \mbox{for some } e\in \mathbb{S}^{n-1}.\]
This proves that $v_k\rightarrow u_0(x\cdot e,y)$ in the $C^1_{\rm loc}$ norm, which contradicts \eqref{contr} for $k$ large enough.
\end{proof}

We can now prove Proposition \ref{proprescalings}.

\begin{proof}[Proof of Proposition \ref{proprescalings}]
We may assume that $\|u-\varphi\|_{L^\infty(\Q_1(x_0,t_0))}=1$, and let $v$ be given by \eqref{v}.

Let $\eta=\eta(\delta,\epsilon, n,s)>0$ be the constant given by Lemma \ref{lemconvergence}, let $r_k$ be the sequence given by Lemma \ref{lem-rescalings} with $\mu=2-\epsilon$, and set
\[v_k(x,y,t) := \frac{v(x_0+r_k x,r_k y,t_0+r_k^{2s}t)}{\|v\|_{L^\infty(\Q_{r_k}(x_0,t_0))}}.\]
Then, recalling \eqref{ec_v} and \eqref{g}, the functions $v_k$ satisfy
\begin{equation}\label{A1}\begin{cases}
L_a v_k=g_k  \quad &\mbox{in } \{y>0\}\\
\min\bigl\{\partial_t v_k-\lim_{y\downarrow0}y^{1-2s}\partial_yv_k,\,v_k\bigr\}=0 \quad &\mbox{on } \{y=0\}\\
v_k\ge 0,\quad \partial_tv_k \ge 0 & \mbox{on } \{y=0\}
\end{cases}\end{equation}
with
\[|g_k(x)|=\frac{(r_k)^2|\Delta \varphi(x_0+r_k x)-\Delta\varphi(x_0)|}{\|v\|_{L^\infty(\Q_{r_k}(x_0,t_0))}}\leq \frac{C(r_k)^2}{(r_k)^{2-\epsilon}}\leq C(r_k)^\epsilon,\]
with $C$ depending only on $\varphi$.

Moreover, by Lemma \ref{semiconvex}, for any $e\in \mathbb{S}^{n-1}$
\[\partial_{ee}v_k(x,y,t)=\frac{(r_k)^2\partial_{ee}v(r_kx,r_ky,r_k^{2s}t)}{\|v\|_{L^\infty(\Q_{r_k}(x_0,t_0))}} \geq -\hat C(r_k)^\epsilon,\]
and
\[\partial_{tt}v_k(x,y,t)=\frac{(r_k)^{4s}\partial_{tt}v(r_kx,r_ky,r_k^{2s}t)}{\|v\|_{L^\infty(\Q_{r_k}(x_0,t_0))}} \geq -\hat C(r_k)^{4s+\epsilon-2}\geq -\hat C(r_k)^\epsilon.\]
on $\{y=0\}$.
Similarly, for any $\xi=\alpha e+\beta t$, with $|\alpha|+|\beta|=1$, we get
\[\partial_{\xi\xi}v_k\geq -\hat C(r_k)^\epsilon\quad \textrm{on}\quad \{y=0\}.\]
Furthermore, we have
\begin{equation}\label{A2}
\|v_k\|_{L^\infty(\Q_{R})}  \le R^{2-\epsilon} \quad \mbox{for all }1\leq R\leq 1/r_k,
\end{equation}
and
\begin{equation}\label{A3}
\|v_k\|_{L^\infty(\Q_{1})}=1,\qquad v_k(0,0)=0,\quad \nabla v_k(0,0)=0.
\end{equation}
Therefore, taking $k$ large enough, by Lemma \ref{lemconvergence} we obtain
\[\bigl| v_k(x,y,t) - u_0(x\cdot e,y) \bigr|  +\bigl |\nabla v_k -  \nabla u_0\bigr|+|\partial_tv_k| \le \delta \quad \mbox{in } \Q_1\]
for some $e\in \mathbb{S}^{n-1}$.
Notice that, thanks to Lemma \ref{lem-rescalings}, it suffices to take $k$ large enough so that
\[(r_k)^\epsilon \leq (\nu(1/k))^{-1/(2-\epsilon)}\leq \eta,\]
where $\eta$ is given by Lemma \ref{lemconvergence}.
In particular, the scaling parameter $r$ can be taken depending only on $\delta$, $n$, $s$, $r_0$, $\varphi$, $\epsilon$, and the modulus $\nu$.
\end{proof}

\section{Lipschitz regularity of the free boundary in $x$ and $t$}
\label{sec5}

The aim of this Section is to prove the Lipschitz regularity in $x$ of the free boundary in a neighborhood (in $x$ and $t$) of any regular free boundary point $(x_0,t_0)$.
In fact, the result gives also the $C^1_x$ regularity of the free boundary at the point $(x_0,t_0)$.

Let be $(x_0,t_0)$ a regular point of the free boundary.  Along this section,
$v$ will denote the function defined in \eqref{v}. Recall that $v$ satisfies \eqref{ec_v}.

The main result of this section is the following.

\begin{prop}\label{free-bdry-Lipschitz}
Assume that $(x_0,t_0)$ is a regular free boundary point with exponent $\epsilon>0$ and modulus~$\nu$, and let $v$ be the function defined in \eqref{v}. Then, there is $e\in \mathbb{S}^{n-1}$ such that for any $\ell\in(0,1)$ there exists $r>0$ such that
\begin{equation}\label{5-1}
\partial_{e'}v\geq0\quad\textrm{in}\quad \Q_r(x_0,t_0),\quad\textrm{for all}\quad e' \in \mathbb S^{n-1} \text{ with } e'\cdot e\geq\frac{\ell}{\sqrt {1+\ell^2}}.
\end{equation}
Moreover, we have
\begin{equation}\label{5-2}
\partial_{e'}v\geq \frac18|\nabla_x v|\quad\textrm{in}\quad \Q_r(x_0,t_0),\quad \mbox{for all}\quad e' \in \mathbb S^{n-1} \text{ with }e'\cdot e\geq\frac12.
\end{equation}
Furthermore, given $\eta>0$ and $\kappa>0$, the radius $r>0$ can be taken such that the rescaled function
\begin{equation}\label{xr}
v_r(x,y,t)=\frac{v(x_0+rx,ry,t_0+r^{2s}t)}{\|v\|_{L^\infty(\Q_r(x_0,t_0))}}
\end{equation}
satisfies
\begin{equation}\label{5-3}
0<\gamma\,\partial_e v_r\leq  \partial_t v_r\leq \kappa\,\partial_e v_r\quad \textrm{in}\quad \Q_1,
\end{equation}
and
\begin{equation}\label{5-4}
\partial_{e}v_r\geq c_1>0\quad \textrm{in}\quad \Q_1\cap\{(x-x_0)\cdot e\geq \eta\},
\end{equation}
\begin{equation}\label{5-5}
\partial_e v_r\geq c_2\,y^{2s}\quad \textrm{in}\quad \Q_1.
\end{equation}
Here, the constant $r>0$ depends only on $\ell$, $\kappa$, $\eta$, $\nu$, $n$, and $s$;
the constant $c_1>0$ depends only on $\ell$, $\eta$, $\nu$, $n$, and $s$;
the constant $c_2>0$ depends only on $\ell$, $\nu$, $n$, and $s$; and
the constant $\gamma>0$ depends on $u$ and the free boundary point $(x_0,t_0)$.
\end{prop}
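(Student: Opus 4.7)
The proof combines three ingredients that become arbitrarily compatible upon shrinking $r$: the $C^1$-convergence of the rescalings $v_r$ to $u_0(x\cdot e,y)$ supplied by Proposition~\ref{proprescalings} with some small defect $\delta$, the rescaled semiconvexity $\partial_{e'e'}v_r\geq -\hat Cr^\epsilon$ coming from Lemma~\ref{semiconvex}, and the weighted maximum principle for $L_a$ with the barrier $y^{2s}$, which is $L_a$-harmonic precisely because $a=1-2s$. The plan is to transfer qualitative features of the explicit profile $u_0$ to $v_r$: $C^1$ closeness supplies positivity wherever $u_0$ is strictly positive, while semiconvexity propagates positivity into the complementary region near the contact set.

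First we would dispatch the estimates $\partial_e v_r\geq c_1$ on $\{x\cdot e\geq\eta\}$, $\partial_e v_r\geq c_2y^{2s}$, and the upper bound $\partial_t v_r\leq\kappa\partial_e v_r$. For the first, the explicit formula for $u_0$ shows $\partial_e u_0(z,y)\geq c(\eta)>0$ on $\{z\geq\eta\}\cap\B_1$, and choosing $\delta<c(\eta)/2$ closes the bound by the triangle inequality. For the $y^{2s}$ lower bound, we would compare $\partial_e v_r-c_2y^{2s}$ via a weighted maximum principle on $\Q_1$: $L_a$ of this difference is $O(r^\epsilon)$ in $\{y>0\}$, non-negative on $\{y=0\}$ once directional monotonicity is known, and non-negative on the lateral and temporal boundary by $C^1$ closeness to $u_0$ (whose extension has the correct $y^{2s}$-behavior near $\{z<0,y=0\}$). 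The upper bound $\partial_t v_r\leq\kappa\partial_e v_r$ follows by the same barrier scheme: $L_a\partial_t v_r=0$ because $g_k$ is time-independent, $|\partial_t v_r|\leq\delta$ everywhere because $\partial_t u_0\equiv 0$, and $\partial_e v_r$ controls $y^{2s}$ from below.

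The core of the proof is the directional monotonicity $\partial_{e'}v\geq 0$ for $e'\cdot e\geq \ell/\sqrt{1+\ell^2}$, and this is where semiconvexity is decisive. On the good region $\{x\cdot e\geq\eta\}\cap\Q_1$, $C^1$ closeness combined with $\partial_{e'}u_0=(e'\cdot e)\,\partial_e u_0$ gives $\partial_{e'}v_r\geq (e'\cdot e)\,c(\eta)-\delta\geq c'(\eta,\ell)>0$. To propagate positivity to an arbitrary $(\tilde x,y,t)\in\Q_1$, we translate along the segment $\tau\mapsto(\tilde x+\tau e',y,t)$: since $e'\cdot e\geq \ell/\sqrt{1+\ell^2}$, a length $\lambda\lesssim \eta\sqrt{1+\ell^2}/\ell$ suffices to reach the good region. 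Integrating the semiconvexity bound along this segment yields
\[\partial_{e'}v_r(\tilde x,y,t)\geq c'(\eta,\ell)-\hat C r^\epsilon\,\lambda,\]
which is non-negative once $r$ is small enough depending on $\ell,\eta,\nu,n,s$. The subsequent estimate $\partial_{e'}v\geq\tfrac18|\nabla_x v|$ follows by applying the previous monotonicity with $\ell$ sufficiently small: $\nabla_x v_r$ is then confined to a narrow spherical cap about $e$, and a planar-trigonometric computation converts this into $\partial_{e'}v_r\geq\tfrac18|\nabla_x v_r|$ for $e'\cdot e\geq\tfrac12$.

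The remaining lower bound $\gamma\partial_e v_r\leq\partial_t v_r$ is qualitatively different, and this is why $\gamma$ is forced to depend on $u$ and $(x_0,t_0)$: since $\partial_t u_0\equiv 0$, $C^1$ closeness alone yields no such bound. We would apply a strong maximum principle together with a Hopf-type argument to $\partial_t v$, which solves $L_a\partial_t v=0$ in $\{y>0\}$ with a Signorini-type condition on the non-contact set, to extract a strict positive lower bound for $\partial_t v$ on compact subsets of a neighborhood of $(x_0,t_0)$; rescaling this bound and combining with the $y^{2s}$ estimate yields $\gamma>0$. The main obstacle throughout is the propagation step in the directional monotonicity: only the semiconvexity-plus-rescaling device bridges the gap between $C^1$ closeness (which by itself gives only $\partial_{e'}v_r\geq-\delta$) and the genuine non-negativity required up to the contact set.
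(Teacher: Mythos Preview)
Your proposal takes a different route from the paper for the core step \eqref{5-1}, and that route has a gap. You obtain $\partial_{e'}v_r\geq 0$ by propagating from $\{x\cdot e\geq\eta\}$ along segments $\tau\mapsto(\tilde x+\tau e',y,t)$ using the rescaled semiconvexity $\partial_{e'e'}v_r\geq -\hat C r^\epsilon$. Two problems: (i) the semiconvexity of Lemma~\ref{semiconvex} is established only on $\{y=0\}$, yet you invoke it at arbitrary height $y$; and (ii) when $\ell$ is small, the segment from a point with $\tilde x\cdot e\approx -\tfrac12$ to $\{x\cdot e\geq\eta\}$ has length of order $1/\ell$ and exits $\Q_1$, so you cannot use the $C^1$-closeness of Proposition~\ref{proprescalings} at its endpoint without first extending that proposition to cylinders $\Q_R$ with $R\sim 1/\ell$. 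Both issues are patchable, but the patches are nontrivial.

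More tellingly, your last sentence misidentifies the key device. The paper does not use semiconvexity in this proof at all: it applies Lemma~\ref{lem-max-princ} (a parabolic Hopf-type barrier) directly to $h=\partial_{e'}v_r$. Hypothesis (H3) is automatic since $\nabla_x v_r=0$ on the contact set $\Gamma$; (H2) comes from differentiating the oblique boundary condition; and $C^1$-closeness alone supplies both (H4), namely $h\geq -\delta$ everywhere, and (H5), namely $h\geq c_0(\ell)$ on $\{y\geq c_{n,a}\}$, using that $\partial_z u_0>0$ there by the strong maximum principle. The lemma then yields $\partial_{e'}v_r\geq c_0 y^{2s}$ in $\Q_{1/2}$, giving \eqref{5-1} and \eqref{5-5} in one stroke --- and, applied to $\kappa\partial_e v_r-\partial_t v_r$, the upper bound in \eqref{5-3} as well. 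Your ``weighted maximum principle'' for \eqref{5-5} is morally this lemma, but as you describe it (treating $h\geq 0$ on $\{y=0\}$ as Dirichlet data) it overlooks that the condition on $\{y=0\}\setminus\Gamma$ is oblique, $\lim_{y\downarrow 0}y^a\partial_y h=\partial_t h$. Once the barrier argument is formulated correctly, the semiconvexity propagation becomes superfluous. (For \eqref{5-2} the paper also uses Lemma~\ref{lem-max-princ}, applied to the combinations $2\partial_e v_r-\partial_\theta v_r$ and $4\partial_{e'}v_r-\partial_e v_r$; your trigonometric alternative is viable. Your treatment of the lower bound in \eqref{5-3} matches the paper.)
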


As a direct consequence of Proposition \ref{free-bdry-Lipschitz}, we find the following.

\begin{cor}\label{cor-Lip}
Let $\varphi\in C^4(\R^n)$ be any obstacle satisfying \eqref{obstacle}, and $u$ be the solution to \eqref{obst-pb}, with $s\in(\frac12,1)$.
Assume that $(x_0,t_0)$ is a regular free boundary point with exponent~$\epsilon>0$ and modulus~$\nu$.

Then, there is $r>0$ such that the free boundary is Lipschitz in~$x$ and $t$ in $Q_r(x_0,t_0)$.
More precisely, after a rotation in the $x$-variables, we have
\[\partial \{u(x,t)=\varphi(x)\}\cap Q_r(x_0,t_0)\equiv\{x_n= G(x',t)\}\cap Q_r(x_0,t_0),\]
where $x=(x',x_n) \in \R^{n-1}\times \R$, and $G:\R^{n-1}\times \R\to \R$ is Lipschitz.

Furthermore, the free boundary is $C^1$ in $x$ at the point $(x_0,t_0)$, in the sense that for any $\ell>0$ there exists $r=r(\ell, \epsilon, \nu, n, s)>0$ such that
\[[G]_{{\rm Lip}_x(Q_r(x_0,t_0))}\leq \ell.\]
\end{cor}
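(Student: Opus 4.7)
The plan is to derive the Lipschitz graph property for the free boundary directly from the monotonicity cones provided by Proposition \ref{free-bdry-Lipschitz}. Fix some $\ell_0 \in (0,1)$ (say $\ell_0 = 1/2$) and some $\kappa_0 > 0$, and invoke Proposition \ref{free-bdry-Lipschitz} to obtain a direction $e \in \mathbb{S}^{n-1}$, a radius $r > 0$, and positive constants $\gamma, c_1, c_2$ such that properties \eqref{5-1}--\eqref{5-5} hold in $Q_r(x_0, t_0)$. After rotating the $x$-variables so that $e = e_n$, property \eqref{5-1} says that for every time $t \in (t_0 - r^{2s}, t_0 + r^{2s})$ the nonnegative function $v(\cdot, 0, t)$ on $B_r(x_0)$ is monotone along every direction $e'$ with $e' \cdot e_n \geq \ell_0/\sqrt{1+\ell_0^2}$. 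The standard geometric argument (if a nonnegative continuous function is monotone along a full cone of directions, then its positivity set is a subgraph in the axial direction) then forces the contact set $\{v(\cdot, 0, t) = 0\} \cap B_r(x_0)$ to be of the form $\{x_n \leq G(x', t)\}$; the trigonometric computation $c/\sqrt{1-c^2} = \ell_0$ for $c = \ell_0/\sqrt{1+\ell_0^2}$ gives $[G(\cdot, t)]_{\mathrm{Lip}} \leq \ell_0$.

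To promote this to joint Lipschitz regularity in $(x',t)$, the key is the two-sided bound \eqref{5-3}, namely $\gamma\,\partial_e v_r \leq \partial_t v_r \leq \kappa_0\,\partial_e v_r$ in $Q_1$. Undoing the rescaling \eqref{xr} yields a corresponding two-sided bound $\gamma'\,\partial_{e_n} v \leq \partial_t v \leq \kappa'\,\partial_{e_n} v$ on $Q_r(x_0, t_0)$, with $\gamma', \kappa' > 0$ finite constants depending on $\gamma, \kappa_0, r, s$. This implies that, in $Q_r(x_0, t_0)$, the function $v$ is also monotone in the additional space-time directions $(e_n, -1/\kappa') \in \R^n \times \R$ and $(-e_n, 1/\gamma')$; indeed
\[\partial_{e_n} v - \tfrac{1}{\kappa'}\,\partial_t v \,\geq\, \partial_{e_n} v - \tfrac{1}{\kappa'}\cdot\kappa'\,\partial_{e_n} v \,=\, 0,\]
and similarly for the second direction. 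Taking convex combinations with the spatial cone from \eqref{5-1} produces a genuine monotonicity cone for $v$ in the joint variable $(x, t) \in \R^n \times \R$. Applying the subgraph argument in $(x,t)$-space then shows that the free boundary is a Lipschitz graph $x_n = G(x', t)$ jointly in $x'$ and $t$, with Lipschitz constant in $t$ controlled by $\max(1/\gamma',1/\kappa')$. This establishes the first part of the corollary.

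For the $C^1_x$ statement, the plan is to re-apply Proposition \ref{free-bdry-Lipschitz} with $\ell$ taken arbitrarily small in $(0,1)$. Each such application produces a direction $e(\ell)$ and a radius $r(\ell)$ for which the spatial cone of monotonicity has width parameter $\ell$, and the argument of the first step gives a graph with Lipschitz constant in $x'$ at most $\ell$. Thanks to the uniqueness of the blow-up direction at a regular point (implicit in Proposition \ref{proprescalings}, since any sub-sequential blow-up equals $u_0(x\cdot e, y)$ for the same $e$), the directions $e(\ell)$ must converge to the originally chosen $e = e_n$ as $\ell \to 0$, and consequently the Lipschitz constant of the original graph $G$ in $x'$ on $Q_{r(\ell)}(x_0,t_0)$ also tends to zero. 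The main obstacle I anticipate is the \emph{cone monotonicity implies subgraph} step: $v$ is not $C^1$ up to the contact set, so one cannot simply apply the implicit function theorem. Instead, one must argue via translations --- monotonicity in a direction $\xi$ means the contact set is invariant under translations by $-t\xi$ with $t>0$ and the positivity set is invariant under translations by $+t\xi$ --- which, applied simultaneously to all $\xi$ in the monotonicity cone, forces the graph structure with the quantitative Lipschitz bound.
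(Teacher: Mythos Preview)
Your proposal is correct and follows essentially the same approach as the paper: the paper's own proof is a two-line invocation of Proposition~\ref{free-bdry-Lipschitz}, using \eqref{5-1} for Lipschitz regularity in $x$ and \eqref{5-3} for Lipschitz regularity in $t$, which is precisely what you spell out in detail. One minor simplification: in Proposition~\ref{free-bdry-Lipschitz} the direction $e$ is fixed \emph{before} $\ell$ is chosen (``there is $e\in\mathbb{S}^{n-1}$ such that for any $\ell\in(0,1)$\ldots''), so your directions $e(\ell)$ are in fact all equal to $e$ and the uniqueness-of-blow-up argument is unnecessary.
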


\begin{proof}
The result follows from Proposition \ref{free-bdry-Lipschitz}.
Indeed, \eqref{5-1} implies that the level sets of the function $u-\varphi$ are $\ell$-Lipschitz in $x$, while \eqref{5-3} implies that
the level sets of the function $u-\varphi$ are uniformly Lipschitz in $t$.
\end{proof}

To prove Proposition \ref{free-bdry-Lipschitz} we will need the following parabolic version of \cite[Lemma~7.2]{CSS}.

\begin{lem}\label{lem-max-princ}
Let be $\Gamma\subseteq Q_1\subseteq \mathbb{R}^{n}\times [0,T]$,
set $c_{n,a}:=\min\left\{\frac18 \sqrt{\frac{s(1+a)}{n}},\left(\frac{\sqrt{s}}8\right)^{1/s}\right\}$,
and let $h:\Q_1\to \R$ be a continuous function satisfying the following properties
for some positive constants $\gamma$, $c_0$, and $\theta$:
\begin{itemize}
\item[(H1)]{$\displaystyle|L_a h|\leq \gamma\, y^{a}$ in $\Q_{1}\cap\{y>0\}$.}
\item[(H2)]{$\lim_{y\downarrow 0}y^{a}\partial_yh=\partial_th$ in $Q_{1}\setminus \Gamma.$}
\item[(H3)]{$h\geq0$ on $\Gamma$.}
\item[(H4)]{$h>-\theta$ on $\Q_1\cap \left\{0<y< c_{n,a}\right\}$.}
\item[(H5)]{$h\geq c_0$ on $\Q_1\cap \left\{y\geq  c_{n,a}\right\}$.}
\end{itemize}
If $\gamma \leq c_0$ and $\theta \leq \frac{s\,c_0}{64}$, then
\begin{equation}\label{hopf}
\mbox{$h\geq c_0 y^{2s}$ in $\Q_{1/2}$.}
\end{equation}
\end{lem}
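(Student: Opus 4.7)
The statement is a Hopf-type estimate for the weighted parabolic operator $L_a$ with Signorini-type boundary condition at $\{y = 0\}$ encoded by (H2)--(H3). To obtain the pointwise conclusion $h \geq c_0 y^{2s}$ everywhere in $\Q_{1/2}$, I would construct a family of translated barriers and apply a comparison principle at each base point.

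Fix $(\bar x, \bar t) \in Q_{1/2}$, choose a cylinder $\Q_R(\bar x, \bar t) \subseteq \Q_1$ (for example $R = 1/4$, which works for all $s \in (0, 1)$), and consider
\[
\phi(x, y, t) := c_0\bigl[y^{2s} - \epsilon(|x - \bar x|^2 - \kappa y^2)\bigr] + \mu(t - \bar t),
\]
with positive parameters $\epsilon, \kappa, \mu$ to be tuned. From the classical identities $L_a(y^{2s}) = 0$, $L_a(|x|^2) = 2ny^a$, $L_a(y^2) = 2(1+a)y^a$ one gets $L_a\phi = 2\epsilon c_0[\kappa(1+a) - n]\,y^a$; picking $\kappa$ with $\kappa(1+a) - n \geq 1$ and $\epsilon$ with $2\epsilon c_0 \geq \gamma$ (compatible with $\gamma \leq c_0$) gives $L_a\phi \geq \gamma y^a \geq L_a h$ in $\{y > 0\}$. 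At $\{y = 0\}$ one has $\phi = -\epsilon c_0|x - \bar x|^2 + \mu(t - \bar t)$ and $\lim_{y\downarrow 0}y^a\partial_y\phi - \partial_t\phi = 2sc_0 - \mu$, which is $\geq 0$ provided $\mu \leq 2sc_0$. Hence the difference $w := h - \phi$ satisfies $L_a w \leq 0$ in the interior, $\partial_t w - \lim_{y\downarrow 0} y^a \partial_y w \geq 0$ on $Q_1 \setminus \Gamma$ by (H2), and $w \geq 0$ on $\Gamma$ by (H3) provided $\phi \leq 0$ there, a constraint that can be arranged by keeping $\mu(t - \bar t)$ moderate.

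The parabolic comparison principle then reduces the problem to checking $h \geq \phi$ on the parabolic boundary of $\Q_R(\bar x, \bar t)$. I would split along the threshold $c_{n,a}$: on $\{y \geq c_{n,a}\}$, (H5) gives $h \geq c_0$ and a direct estimate shows $\phi \leq c_0$ for suitably small $\epsilon\kappa$; on $\{0 < y < c_{n,a}\}$, (H4) gives $h > -\theta$, and one needs $\phi \leq -\theta$. The latter inequality is secured by the specific form of $c_{n,a}$---namely the implicit bounds $c_{n,a}^{2s} \leq s/64$ and $c_{n,a}^2 \leq s(1+a)/(64n)$---combined with the smallness $\theta \leq sc_0/64$, which together provide exactly the slack needed to let the quadratic term $-\epsilon c_0|x - \bar x|^2$ overpower $c_0 y^{2s}$ on the lateral part of the boundary, and to absorb the initial-data contribution $-\mu R^{2s}$ on the initial slice. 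Once $h \geq \phi$ is established in $\Q_R(\bar x, \bar t)$, evaluating at $x = \bar x$, $t = \bar t$ yields $h(\bar x, y, \bar t) \geq c_0[y^{2s} + \epsilon\kappa y^2] \geq c_0 y^{2s}$ for $y \in [0, R]$, while the range $y \in [R, 1/2]$ follows directly from (H5) (since $y \leq 1$). As $(\bar x, \bar t)$ was arbitrary in $Q_{1/2}$, this gives the pointwise conclusion.

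\textbf{Main obstacle.} The crux is the combined bookkeeping: the numerical thresholds $1/8$ and $1/64$ in the definition of $c_{n,a}$ and the factor $s/64$ in the smallness hypothesis on $\theta$ are tuned exactly so that the constraints $2\epsilon c_0 \geq \gamma$, $\mu \leq 2sc_0$, $\phi \leq c_0$ on the upper lateral piece, $\phi \leq -\theta$ on the lower lateral piece, and compatibility at the initial slice are simultaneously satisfiable. The additional time-shift $\mu(t-\bar t)$ is what handles the otherwise problematic initial slice of $\Q_R(\bar x, \bar t)$, and the constraint $\mu \leq 2sc_0$ is precisely where the factor $s$ in the hypothesis $\theta \leq sc_0/64$ originates; any relaxation of these thresholds would break the construction.
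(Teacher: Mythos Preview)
Your strategy is the paper's: both build a comparison function of the form $c_0 y^{2s}$ plus quadratic-in-$(x,y)$ and linear-in-$t$ corrections and invoke the maximum principle. The paper argues by contradiction and sets $w=h+c_0 P-\tfrac{\gamma}{2(a+1)}y^2$ with $P=|x-x_0|^2+2s(t_0-t)-\tfrac{n}{a+1}y^2-y^{2s}$; unpacking, this is your $\phi$ with the specific choices $\epsilon=1$, $\kappa=\tfrac{n}{a+1}$, $\mu=2sc_0$, together with the separate $\tfrac{\gamma}{2(a+1)}y^2$ term to absorb the right-hand side from (H1). The direct-vs-contradiction framing is cosmetic.

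There is one point where your bookkeeping does not close. You work in the half-ball cylinder $\Q_R(\bar x,\bar t)$ and claim that on the portion of its parabolic boundary with $y\ge c_{n,a}$ one gets $\phi\le c_0$ ``for suitably small $\epsilon\kappa$.'' But $\epsilon\kappa$ cannot be taken small: the constraints $\kappa(1+a)-n\ge 1$ and $2\epsilon c_0\ge\gamma$ yield $\epsilon\kappa(1+a)\ge \epsilon n+\tfrac{\gamma}{2c_0}$, hence $\epsilon\kappa\ge\tfrac{\gamma}{2c_0(1+a)}=\tfrac{\gamma}{2c_0(2-2s)}$, which blows up as $s\uparrow 1$. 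Since on $\partial\B_R$ the height $y$ can reach $R=\tfrac14$, the term $c_0\epsilon\kappa y^2$ then exceeds $c_0$ and the comparison fails there. The paper sidesteps this by truncating the domain at height $c_{n,a}$ from the outset, working in the box $\{|x-x_0|<\tfrac14\}\times\{0<y<c_{n,a}\}\times(t_0-\tfrac14,t_0)$. The top of this box is $\{y=c_{n,a}\}$, and the definition of $c_{n,a}$ is arranged precisely so that $\tfrac{n}{a+1}y^2+y^{2s}\le\tfrac{s}{32}$ there --- this, not smallness of $\epsilon\kappa$, is the real reason the boundary estimate closes and why the odd-looking constant $c_{n,a}$ has its particular form. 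Swap your half-ball for this box and the rest of your argument goes through.
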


\begin{proof}
We prove \eqref{hopf} by contradiction.
Hence, we suppose there exists $(x_0,y_0,t_0)\in \Q_{1/2}$ such that $h(x_0,y_0,t_0)<c_0 y_0^{2s}$. Notice that, thanks to (H5), $y_0 < c_{n,a}$.
Hence, we define
$$\Q:=\left\{(x,y,t):\, |x-x_0|<\frac{1}{4},\quad t_0-\frac{1}{4}<t<t_0,\quad 0<y<c_{n,a}\right\},$$
we consider the $a$-harmonic polynomial $P$ given by
$$P(x,y,t):=|x-x_0|^{2}+2s(t_0-t)-\frac{n}{a+1}y^2-y^{2s},$$
and we set
$$w(x,y,t):=h(x,y,t)+\tau P(x,y,t)-\frac{\gamma}{2(a+1)}y^2,$$
where $\gamma>0$ is as in (H1).
Then, thanks to (H1)-(H3), since $a=1-2s$ and $\partial_tP=
\lim_{y\downarrow 0} y^{a}\partial_y P$, we have that
\begin{equation}\label{eq-w h}\begin{cases}
L_a w=L_a h-\gamma \,y^{a}\leq 0 &\mbox{ in $\Q$}\\
w\geq \tau P>0& \mbox{ on $\Gamma$}\\
\lim_{y\downarrow 0} y^{a}\partial_y w =\partial_tw& \mbox { in $(\Q\cap\{y=0\})\setminus \Gamma$}\\
w(x_0,y_0, t_0)\leq h(x_0,y_0,t_0)-\tau \,y_0^{2s}<0.
\end{cases}\end{equation}
Since $(x_0,y_0,t_0) \in \Q$, it follows by the maximum principle  that $w$ must have a negative minimum at some point $(x_1,y_1, t_1)$ that belongs to the parabolic boundary $\partial_{P}\Q$ of $\Q$. 
Moreover, by the second and third equations in \eqref{eq-w h}, we deduce that $w(x,0,t)$ can attain its minimum only on the parabolic boundary of $\Q\cap\{y=0\}$.
Therefore, we deduce that
$(x_1,y_1,t_1)\in \overline{\partial_{P} \Q\cap\{y>0\}}$.

We now study now the sign of the function $w$ in each part of $\overline{\partial_{P} \Q\cap\{y>0\}}$ to get a contradiction. Notice that, with our choice of $c_{n,a}$,
\begin{equation}
\label{eq:y cna}
\frac{n}{a+1}y^2+y^{2s} \leq \frac{s}{64}+\frac{s}{64}= \frac{s}{32}\qquad \forall\,y \in [0,c_{n,a}]
\end{equation}

- If $y= c_{n,a}$,
it follows by (H5) and \eqref{eq:y cna} that
\begin{equation}\label{main}
w\geq c_0-\frac{s\,c_0}{32}-\frac{s\,\gamma}{128n}>0,
\end{equation}
provided $\gamma \leq c_0$.

- If $|x-x_0|=1/4$ and $y \in \left[0,c_{n,a}\right]$, then it follows by (H4) and
\eqref{eq:y cna} that
\begin{equation}\label{boundary_x}
w\geq -\theta+ c_0\left(\frac{1}{16}-\frac{s}{32}\right)-\frac{s\,\gamma}{128n}>0
\end{equation}
provided $\gamma \leq c_0$ and $\theta \leq \frac{c_0}{64}.$

- If $t=t_0-1/4,$ using again (H5) we obtain that
$$
w\geq -\theta+ c_0\left(\frac{s}{2}-\frac{s}{32}\right)-\frac{s\,\gamma}{128n}>0
$$
provided $\gamma \leq c_0$ and $\theta \leq \frac{s\,c_0}{4}$

Hence, if $\gamma \leq c_0$ and $\theta \leq \frac{s\,c_0}{64}$,
this shows the desired contradiction
provided $\gamma \leq c_0$ and $\theta \leq \frac{s\,c_0}{64}$,
concluding the proof.

%
%
\end{proof}

We now prove Proposition \ref{free-bdry-Lipschitz}.

\begin{proof}[Proof of Proposition \ref{free-bdry-Lipschitz}]
Given $\eta>0$ and $\kappa>0$, fix $\delta \in (0,\eta^s)$.

Consider the rescaled function $v_r$ defined in \eqref{xr} where
$r>0$ is given by Proposition \ref{proprescalings} and $v$ is defined in \eqref{v}. Thus, it follows that for some $e\in \mathbb{S}^{n-1}$
\[|\nabla v_r(x,y,t)-\nabla u_0(x,y)|+|\partial_t{v_r}(x,y,t)| \leq \delta\quad \textrm{in}\ \Q_1.\] Let us fix consider $\ell>0$ small and $e'\in \mathbb{S}^{n-1}$ such that
$$e'\cdot e \ge \frac{\ell}{\sqrt {1+\ell^2}} \ge \frac \ell2.$$
Then,
\begin{equation} \label{largeatmanypoints}
\partial_{e'}v_r\geq \partial_{e'}u_0 - \delta,\quad \kappa\,\partial_{e'}v_r -\partial_{t}v_{r}\geq \kappa\,\partial_{e'}u_0 - \delta \qquad \mbox{in }\Q_1.
\end{equation}
In particular we get that
$$\partial_{e}v_r\geq ((x-x_0)\cdot e)_+^s - \delta\geq \eta^s-\delta \quad \textrm{in}\quad \Q_1\cap\{(x-x_0)\cdot e\geq \eta\},$$
thus \eqref{5-4} is satisfied with $c_1:=\eta^s-\delta>0$.
\vspace{3mm}

Denoting by $C_r:=\|v\|_{L^\infty(\Q_{r}(x_0,t_0))}$, it follows by Proposition \ref{proprescalings} that
\begin{equation}\label{lower}
C_r\geq \frac12 r^{2-\epsilon}
\end{equation}
where $\epsilon>0$.
Moreover
$$
L_a v_r=\frac{r^{2-a}}{C_r} (L_a v)(x_0+r x,r y,t_0+r^{2s}t).
$$
Also,  recalling \eqref{ec_v} and that $1-a=2s$, we see that on the set $\{v_r(x,0,t)>0\}$ it holds
\begin{eqnarray*}
 \lim_{y\downarrow 0}y^{a}\partial_{y}v_r &=& \frac{r}{C_r} \lim_{y\downarrow 0}\left(y^{a}(\partial_{y} v)(x_0+r x,ry,t_0+r^{2s}t)\right)\\
 &=&\frac{r^{1-a}}{C_r} \partial_t v(x_0+r x,r y,t_0+r^{2s}t)\\
 &=& \partial_{t}v_r(x,y,t).\\
\end{eqnarray*}
Hence, we have proved that
\begin{equation}\label{eqnvk}
\begin{cases}
L_a v_r=\frac{r^{2}}{C_r}y^{a} g(x_0+r x) &\mbox{in } \mathbb{R}^{n+1}_{+}\times[0,T]\setminus\{v_r(x,0,t)=0\},\\
v_{r}\ge 0,& \mbox{on } \{y=0\},\\
\lim_{y\downarrow 0} y^{a}\partial_{y}v_r=\partial_{t}{v_r} & \mbox{on } \{v_r(x,0,t)>0\},\\
v(x,0,0)=0.
\end{cases}
\end{equation}
Reducing the size of $\eta$ if needed and taking $\delta$ sufficiently small,
we can take the partial derivative $\partial_{e'}$ (resp. $\kappa\partial_{e'}-\partial_{t}$) in \eqref{eqnvk},
and using \eqref{g}, \eqref{largeatmanypoints}, \eqref{lower}, and Lemma \ref{lem-max-princ}, we deduce that
\begin{equation}\label{this}
\partial_{e'}v_r\ge c_2 y^{2s}  \qquad  (\text{resp. }\quad \kappa\,\partial_{e'}v_r-\partial_t v_r\ge c_2 y^{2s}) \qquad \mbox{in }\Q_{1/2}
\end{equation}
provided $\delta$ is sufficiently small.
In particular, this proves  \eqref{5-5} and the last inequality in \eqref{5-3}.
Moreover, using that $v_r$ is a rescaling of $v$, \eqref{this} implies that
$$ \partial_{e'} v \ge 0 \quad \mbox{in}\quad  \Q_{r/2}(x_0,t_0),$$
so \eqref{5-1} follows (up to replace $r$ by $r/2$).
\vspace{3mm}

We next prove \eqref{5-2}.
For that, let $\theta\in \mathbb{S}^{n-1}$.
Since by Proposition \ref{proprescalings}
$$|2\partial_{e}v_r-\partial_{\theta}v_r-2\partial_{e}u_0-\partial_{\theta}u_0|\leq 3 \delta,$$
applying as before  the Lemma \ref{lem-max-princ} to $2\partial_{e}v_r-\partial_{\theta}v_r$, we conclude that
\[2\partial_{e}v_r\geq \partial_{\theta}v_r\qquad \mbox{for any direction $\theta\in \mathbb{S}^{n-1}$},\]
therefore
\begin{equation}\label{5-2-1}
2\partial_{e}v_r\geq |\nabla_x v_r| \qquad \mbox{in }\Q_{1/2}.
\end{equation}
On the other hand, since we also have
$$|4\partial_{e'}v_r-\partial_{e}v_r-4\partial_{e'}u_0-\partial_{e}u_0|\leq 5 \delta,$$
we can also apply Lemma \ref{lem-max-princ} to $4\partial_{e'}v_r-\partial_{e}v_r$ for any vector $e' \in \mathbb S^{n-1}$ with $e'\cdot e\geq 1/2$ to get
\begin{equation}\label{5-2-2}
4\partial_{e'}v_r\geq \partial_{e}v_r \qquad \mbox{in }\Q_{1/2}.
\end{equation}
Hence, it follows by \eqref{5-2-1} and \eqref{5-2-2} that
\[\partial_{e'}v_r\geq \frac18|\nabla_x v_r|, \quad  \mbox{for any}\quad e'\cdot e\geq 1/2,\]
which yields \eqref{5-2} with $r/2$ in place of $r$.
\vspace{3mm}

Finally, we prove the first inequality in \eqref{5-3}.
For this we simply notice that, since $\partial_t v_r>0$ in $\{v_r>0\}$ (by the strong maximum principle), there exists $c>0$ such that
$$
\partial_t v_r\geq c>0\qquad \text{in }\Q_1\cap \{x\cdot e\geq c_{n,a}\},
$$ where $c_{n,a}$ is defined in Lemma \ref{lem-max-princ}. Thus
$$
\partial_t v_r-\gamma\, \partial_{e}v_r\geq c/2 \qquad \text{in }\Q_1\cap \{x\cdot e\geq c_{n,a}\},
$$
provided that $\gamma>0$ is small enough, and we conclude that
$\partial_t v_r\geq \gamma \partial_{e}v_r$ in $\Q_{1/2}$ as before.
\end{proof}

To finish this section, we prove higher regularity in time for the solution $u$ at any regular point.

\begin{prop}\label{optimal_t}
Let $\varphi$ be an obstacle satisfying \eqref{obstacle}, let $u$ be the solution of \eqref{obst-pb} with $s\in(\frac12,1),$ and let $(x_0,t_0)$ be a regular free boundary point with exponent $\epsilon>0$ and modulus $\nu$. Then
\[\|\partial_t u\|_{{C^s_{x,t}}(Q_r(x_0,t_0))}+\|\nabla u\|_{C^s_{x,t}(Q_r(x_0,t_0))}\leq C,\]
where $C$ and $r>0$ depend only on $n$, $s$, $\epsilon$, and $\nu$.
\end{prop}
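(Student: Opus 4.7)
The strategy proceeds in three steps: first, establish the sharp growth bound $\|v\|_{L^\infty(\Q_r(x_0,t_0))}\le Cr^{1+s}$ at the regular point (the main obstacle); then translate it into $C^s$ pointwise decay of $\nabla v$ and $\partial_t v$ at $(x_0,t_0)$ via interior estimates on rescalings; and finally upgrade to $C^s_{x,t}$ regularity in a full neighborhood using that every nearby free boundary point is regular with the same modulus.

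For Step 1, I would argue by contradiction and blow-up, relying on the classification Theorem \ref{thmclassif}. If $C_r:=\|v\|_{L^\infty(\Q_r(x_0,t_0))}$ failed the bound, a sequence $\rho_k\downarrow 0$ would satisfy $C_{\rho_k}/\rho_k^{1+s}\to\infty$. Consider the normalized rescalings
\[w_k(X,Y,T):=\frac{v(x_0+\rho_k X,\rho_k Y,t_0+\rho_k^{2s}T)}{C_{\rho_k}}.\]
Picking a subsequence via a Lemma \ref{lem-rescalings0}-type selection with exponent $1+s+\delta$ (for small $\delta>0$), and using the $C^{1+\alpha}$ regularity of \cite{CF} together with the semiconvexity Lemma \ref{semiconvex}, one extracts a $C^1_{\rm loc}$-limit $w_\infty$. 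The limit is a nontrivial convex global solution satisfying \eqref{eqn-global} with subquadratic growth and $w_\infty(0,0,0)=0$, so by Theorem \ref{thmclassif} it must equal $K u_0(X\cdot e,Y)$. Since $\|w_k\|_{L^\infty(\Q_1)}=1$, necessarily $K=1$, and the $(1+s)$-homogeneity of $u_0$ then forces $C_{R\rho_k}/C_{\rho_k}=\|w_k\|_{L^\infty(\Q_R)}\to R^{1+s}$ for every fixed $R\ge 1$. A dyadic iteration from $\rho_k$ up to a macroscopic scale $r_0$ at which $C_{r_0}$ is bounded produces $C_{\rho_k}\le C(\rho_k/r_0)^{1+s}C_{r_0}$, contradicting $C_{\rho_k}/\rho_k^{1+s}\to\infty$.

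For Step 2, the rescalings $v_r(X,Y,T):=r^{-(1+s)}v(x_0+rX,rY,t_0+r^{2s}T)$ are uniformly bounded in $\Q_1$ by Step 1 and solve \eqref{ec_v} on their positivity set with right-hand side $r^{1-s}\bigl(g(x_0+r\,\cdot)-g(x_0)\bigr)$, which by \eqref{g} is of size $O(r^{2-s})=o(1)$. The interior $C^{1+\alpha}$ estimates from \cite{CF} then yield a uniform bound
\[\|\nabla v_r\|_{L^\infty(\Q_{1/2})}+\|\partial_t v_r\|_{L^\infty(\Q_{1/2})}\le C,\]
which in the original variables translates into $|\nabla v|+|\partial_t v|\le Cr^s$ on $\Q_{r/2}(x_0,t_0)$.

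For Step 3, Proposition \ref{proprescalings} and Corollary \ref{cor-Lip} guarantee that every free boundary point in a small parabolic cylinder around $(x_0,t_0)$ is regular with a common modulus $\nu'$ (by a simple continuity argument on the quantity in Definition \ref{def-regular-2}), so Steps 1--2 apply with uniform constants at every such nearby free boundary point. At interior coincidence points $\nabla u=\nabla\varphi$ and $\partial_t u=0$ smoothly, while the comparison of $\nabla u,\partial_t u$ across the free boundary is controlled by the Lipschitz regularity of Corollary \ref{cor-Lip}. A standard Campanato-type argument combining these uniform pointwise $C^s$-vanishing estimates at free boundary points with the smoothness in the coincidence set then delivers $\nabla u,\partial_t u\in C^s_{x,t}(Q_r(x_0,t_0))$.
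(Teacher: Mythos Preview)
Your approach diverges substantially from the paper's, and it contains two genuine gaps.

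\medskip
\textbf{Step 1 does not close.} The blow-up limit $w_\infty=u_0$ gives, for each \emph{fixed} $R\ge 1$, that $C_{R\rho_k}/C_{\rho_k}\to R^{1+s}$. But your ``dyadic iteration from $\rho_k$ up to $r_0$'' requires control of the ratios $C_{2^{j+1}\rho_k}/C_{2^j\rho_k}$ for \emph{all} $j$ between $0$ and $\log_2(r_0/\rho_k)$, i.e.\ at scales $2^j\rho_k$ that are not among your selected scales. The compactness argument only says something at the bottom scale $\rho_k$; it gives no information at the intermediate scales, and the number of such scales goes to infinity as $k\to\infty$. Without a further idea (some form of almost-monotonicity or a uniform improvement-of-flatness), the iteration does not yield $C_{\rho_k}\le C\rho_k^{1+s}$.

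\medskip
\textbf{Step 3 is circular.} You assert that nearby free boundary points are regular with a common modulus ``by a simple continuity argument on the quantity in Definition~\ref{def-regular-2}''. For a \emph{fixed} $\rho_0>0$ continuity does give $\sup_{r\ge \rho_0}\|v\|_{L^\infty(\Q_r(x_1,t_1))}/r^{2-\epsilon}\ge \tfrac12\nu(\rho_0)$ on a small neighborhood of $(x_0,t_0)$, but the resulting modulus is bounded and does not tend to $\infty$ as $\rho\downarrow 0$; moreover the neighborhood shrinks with $\rho_0$. Producing a genuine common modulus is precisely the content of Proposition~\ref{contagi}, proved \emph{after} Proposition~\ref{optimal_t} via a barrier (Lemma~\ref{homog-subsol}), not by continuity. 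So your Step~3 assumes something the paper establishes only later, and by a nontrivial comparison argument.

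\medskip
\textbf{How the paper actually proceeds.} The paper never proves the $r^{1+s}$ upper bound before this proposition (that bound is obtained only in Section~\ref{sec8}, as a consequence of the expansion). Instead, it combines two ingredients already available at this point: the global $C^{1+s}_x$ estimate from \cite{CF}, which gives $\partial_e v\le C\,d_x^s$; and the comparison \eqref{5-3}, $\partial_t v\le \kappa\,\partial_e v$, from Proposition~\ref{free-bdry-Lipschitz}. Together with the Lipschitz regularity of the free boundary (Corollary~\ref{cor-Lip}), these yield $\partial_t v\le C\,d_p^s$ directly. Interior estimates for the fractional heat equation on balls of radius $d_p$ then give $|\nabla\partial_t v|\le C\,d_x^{s-1}$ and $\partial_t v\in C^s_x$. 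From $|(\partial_e v)^{1-s}(\partial_{te}v)^s|\le C$ one gets $(\partial_e v)^{1/s}\in{\rm Lip}_t$, hence $\nabla v\in C^s_t$. Finally, $\partial_t v\in C^s_t$ is obtained by a subsolution argument on the incremental quotient $(\partial_t v(\cdot,t+\tau)-\partial_t v(\cdot,t))/\tau^s$, using the semiconvexity $\partial_{tt}v\ge -\hat C$ to get an $L^1$ bound and an $L^1\!\to\!L^\infty$ estimate from \cite{ChD}. This route gives regularity in a full neighborhood $Q_r(x_0,t_0)$ purely from properties at the single regular point, with constants depending only on $n,s,\epsilon,\nu$, and avoids both of the issues above.
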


\begin{proof}
Let $v=u-\varphi$.
By the results of \cite{CF}, we know that
\[\|\nabla u\|_{C^s(\R^n)}\leq C.\]
Notice that, since $\varphi$ is independent of $t$, it is enough to prove the desired regularity of $v$.
For that purpose, note that by Corollary \ref{cor-Lip} the free boundary is Lipschitz in $x$ and $t$.
Hence, by \eqref{5-3} and the optimal $C^{1+s}_x$ regularity of solutions in space established in \cite{CF} we get that
\begin{equation}\label{simpli}
0<\partial_tv<C\,\partial_{e}v\leq C\,d_x^{s}\leq C \,d_p^{s} \qquad \text{in } Q_r(x_0,t_0),
\end{equation}
where $d_x(x,t):=\textrm{dist}(x,\{v(\cdot,t)=0\})$ denotes the Euclidean distance in $\R^n\times \{t\}$ to the free boundary, and $d_p$ the parabolic one in $\R^n\times \R$.
\vspace{3mm}

Let $(\bar x,\bar t)$ be any point in $\{v>0\}\cap Q_r(x_0,t_0)$, set $R:=d_p(\bar x,\bar t)/2>0$, and
define
$$w(x,t):=\partial_tv(x_0+R(x-x_0), t_0+R^{2s}(t-t_0)).$$
Fix $e\in \mathbb{S}^{n-1}$.
By \eqref{simpli} and interior regularity estimates for the fractional heat equation (see for example \cite[Theorem 1.3]{XX} or \cite[Theorem 2.2]{Serra}), it follows that
$$
\sup _{t\in[-1/2,0]} [w]_{{C}_x^{1}(B_{1/2})}\leq C\,R^{s}\quad \textrm{and}\quad
\sup _{t\in[-1/2,0]} [w]_{{C}_x^{s}(B_{1/2})}\leq C\,R^{s}.
$$
Therefore the previous inequalities imply that
$$\sup_{t\in(t_0-\frac{R^{2s}}{2},t_0]}\|\nabla \partial_tv\|_{L^{\infty}(B_{R/2}(x_0))}\leq C d_p(\bar x,\bar t)^{s-1},\quad
\sup_{t\in(t_0-\frac{R^{2s}}{2},t_0]}\|\partial_tv\|_{C^s_x(B_{R/2}(x_0))}\leq C.
$$
Since this can be done for any $(\bar x,\bar t)\in \{v>0\}\cap Q_r(x_0,t_0)$, and using again that (thanks to the Lipschitz regularity of the free boundary) $d_x$ and $d_p$ are comparable, we deduce that
\begin{equation}\label{imp_t}
|\nabla \partial_tv|\leq C_1 d_x^{s-1}\quad \textrm{in}\quad Q_r(x_0,t_0),\quad \textrm{and}\quad \|\partial_tv\|_{C^s_x(Q_r(x_0,t_0))}\leq C.
\end{equation}
Now, by \eqref{simpli} and \eqref{imp_t} we have that, for any $e\in \mathbb{S}^{n-1}$,
$$|(\partial_e v)^{1-s}(\partial_{te} v)^{s}|\leq C.$$
The previous inequality implies that
$$|\partial_{t} (\partial_e v)^{\frac{1}{s}}|\leq C,$$
that is, $(\partial_e v)^{\frac{1}{s}}\in {\rm Lip}_t$, which yields in particular that
\begin{equation}\label{blz}
\|\nabla v\|_{C^s_{x,t}(Q_r(x_0,t_0))}\leq C.
\end{equation}
Recalling that $\partial_tv$ and $\nabla v$ vanish on the contact set,
the previous inequality combined with \eqref{5-3} implies that
\begin{equation}\label{**}
|\partial_tv(x_1,t_1+\tau)-\partial_tv(x_1,t_1)|\leq C_0|\tau|^s
\end{equation}
for all points $(x_1,t_1)$ in $\{v=0\}\cap Q_{r/2}(x_0,t_0)$ and any $\tau \in (0,r/2)$.
\vspace{3mm}

We now prove that \eqref{**} yields $\partial_tv\in C^s_t(Q_{r/8}(x_0,t_0))$.
First, recall that $\partial_{tt}v \geq -\hat C$ by Lemma \ref{semiconvex}.
Hence, if $\psi \in C^\infty_c(Q_{2r}(x_0,t_0))$ is a nonnegative function with $\psi\equiv 1$ in $Q_{r}(x_0,t_0)$, we have
\begin{multline*}
\int_{Q_{r}(x_0,t_0)} (\partial_{tt}v+\hat C)\,dx \,dt \leq \int_{Q_{r}(x_0,t_0)} (\partial_{tt}v+\hat C)\psi \,dx \,dt\\
=\int_{Q_{2r}(x_0,t_0)} \bigl(v\,\partial_{tt}\psi+\hat C\,\psi\bigr) \,dx \,dt \leq C.
\end{multline*}
In particular, this implies that the function
\[w(x,t):=\frac{\partial_tv(x,t+\tau)-\partial_tv(x,t)}{\tau^s}\]
belongs to $L^1(Q_{r/2}(x_0,t_0))$ with a bound independent of $\tau \in (0,r/2)$.

Since  $w$ solves the fractional heat equation in the set $\{v>0\}$, and it is bounded by $C_0$ on $\{v=0\}\cap Q_{r/2}(x_0,t_0)$ by \eqref{**},
the function $\tilde w:=\max(w,C_0)$ is a subsolution in $Q_{r/2}(x_0,t_0)$ which belongs to $L^1(Q_{r}(x_0,t_0))$.
Considering a cut-off function $\psi \in C^\infty_c(B_r(x_0))$ with $\psi \equiv 1$ in $B_{3r/8(x_0)}$,
we see that $\hat w:=\tilde w \psi$ solves
$$
\partial_t \hat w +(-\Delta)^s\hat w \leq -(-\Delta)^s[(1-\psi) \tilde w] \qquad \text{in }Q_{r/4}(x_0,t_0).
$$
Since $(-\Delta)^s[(1-\psi) \tilde w]$ is universally bounded inside $Q_{r/4}(x_0,t_0)$,
we can apply \cite[Corollary 6.2]{ChD} to deduce that $\tilde w\in L^\infty(Q_{r/8}(x_0,t_0))$.
This proves that
\[\frac{\partial_tv(x,t+\tau)-\partial_tv(x,t)}{\tau^s}\leq C \qquad \text{in }Q_{r/8}(x_0,t_0)\qquad \forall\, \tau \in (0,r/2),\]
which implies that $\partial_tv\in C^s_t(Q_{r/8}(x_0,t_0))$, as desired.
\end{proof}

\section{$C^{1,\alpha}$ regularity of the free boundary in $x$}
\label{sec6}

We prove now that the free boundary is $C^{1,\alpha}$ in $x$ near regular points.
For this, we need some steps: first, we show that the set of regular points is open; then, by the results of the previous section, we deduce that the regular set is $C^1_x$; finally, by using the results in \cite{RS-C1}, we conclude the $C^{1,\alpha}_x$ regularity of the free boundary.

We will need the following result (see \cite[Lemma 4.1]{RS-C1}) which states the existence of a positive subsolution of homogeneity $s+\gamma$ vanishing outside of a convex cone that is very close to a half space.

\begin{lem}\label{homog-subsol}
Let $s\in(0,1)$, and $e\in \mathbb{S}^{n-1}$.
For every $\gamma\in(0,s)$ there is $\eta>0$ such that the function
\[ \Phi(x) := \left( e\cdot x- \frac{\eta}{4} |x| \left(1- \frac{(e\cdot x)^2}{|x|^2} \right)\right)_+^{s+\gamma}\]
satisfies
\[\begin{cases}
(-\Delta)^s \Phi \leq -c_\gamma\,d^{\gamma-s}<0 \quad & \mbox{in }\mathcal C_\eta  \\
\Phi = 0  \quad & \mbox{in }\R^n \setminus \mathcal C_\eta,\\
\end{cases}\]
where
\[\mathcal C_\eta: = \left\{ x \in \R^n\ : \ e\cdot\frac{x}{|x|} >  \frac{\eta}{4} |x| \left(1- \frac{(e\cdot x)^2}{|x|^2} \right) \right\},\qquad
 d(x):={\rm dist}(x,\R^n\setminus \mathcal C_\eta).\]
Here the constants $c_\gamma$ and $\eta$ depend only on $\gamma$ and $s$.
\end{lem}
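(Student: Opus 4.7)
The plan is to prove the inequality by perturbing from the explicit solution in the flat case $\eta=0$ and exploiting the homogeneity of $\Phi$. Writing $F_\eta(x):=e\cdot x-\tfrac{\eta}{4}|x|\bigl(1-(e\cdot x)^2/|x|^2\bigr)$, so that $\Phi=(F_\eta)_+^{s+\gamma}$, one checks immediately that $F_\eta$ is $1$-homogeneous, $\Phi$ is $(s+\gamma)$-homogeneous, and $d$ is $1$-homogeneous. Therefore both sides of the desired inequality are $(\gamma-s)$-homogeneous, and it suffices to verify
\[(-\Delta)^s\Phi(x_0)\leq -c_\gamma\,d(x_0)^{\gamma-s}\qquad \text{for every } x_0\in \mathcal C_\eta \text{ with } |x_0|=1.\]

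For $\eta=0$, one has $\mathcal C_0=\{e\cdot x>0\}$, $d(x)=(e\cdot x)_+$, and $\Phi_0(x)=(e\cdot x)_+^{s+\gamma}$. Since $\gamma\in(0,s)$, the exponent $s+\gamma\in(s,2s)$ differs from $s$, and the classical one-dimensional identity $(-\Delta_t)^s[t_+^{s+\gamma}]=-c(s,\gamma)\,t_+^{\gamma-s}$ with $c(s,\gamma)>0$, combined with Fubini in the directions orthogonal to $e$, yields
\[(-\Delta)^s\Phi_0(x_0)=-c_0\,(e\cdot x_0)^{\gamma-s}=-c_0\,d(x_0)^{\gamma-s},\]
for some $c_0=c_0(n,s,\gamma)>0$. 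This is precisely the lemma in the flat case, with a strict sign and a quantitative constant.

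To pass from $\eta=0$ to small positive $\eta$, I would establish the quantitative perturbation estimate
\[\bigl|(-\Delta)^s\Phi(x_0)-(-\Delta)^s\Phi_0(x_0)\bigr|\leq \omega(\eta)\,d_\eta(x_0)^{\gamma-s},\qquad x_0\in \mathcal C_\eta\cap\partial B_1,\]
where $d_\eta$ denotes the distance to $\partial\mathcal C_\eta$ and $\omega(\eta)\to 0$ as $\eta\to 0$. Combined with the flat-case identity this gives the lemma with $c_\gamma=c_0/2$ for all $\eta$ sufficiently small (depending only on $\gamma$ and $s$).

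The main obstacle is this last estimate, because both $(-\Delta)^s\Phi$ and $(-\Delta)^s\Phi_0$ blow up at the rate $d^{\gamma-s}$ near the boundary and a scale-invariant perturbation bound is needed. I would handle this by splitting the singular integral for $(-\Delta)^s(\Phi-\Phi_0)(x_0)$ at the scale $r=d_\eta(x_0)/2$: on $B_r(x_0)$, where both functions are smooth and close to first order, a second-order Taylor expansion of $F_\eta-F_0$ gives the required bound after rescaling by $r$; on $\R^n\setminus B_r(x_0)$, the pointwise bound $|\Phi-\Phi_0|\leq C\eta\,|x|\,(1+|x|)^{s+\gamma-1}$ follows from $|F_\eta-F_0|\leq \tfrac{\eta}{4}|x|$ combined with the H\"older continuity of $t\mapsto t_+^{s+\gamma}$, and the corresponding tail integral is again controlled after rescaling. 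Alternatively, a compactness/contradiction argument can replace the direct estimate: negating the lemma produces a sequence $(\eta_k,x_k)$ with $\eta_k\to 0$ which, once rescaled by $d_{\eta_k}(x_k)$ and normalized, converges to a configuration violating the strict flat-case identity of the second paragraph, a contradiction.
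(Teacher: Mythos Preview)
The paper does not prove this lemma; it is quoted verbatim from \cite[Lemma~4.1]{RS-C1}. So there is no ``paper's proof'' to compare against, and your proposal has to be judged on its own.

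Your reduction by homogeneity to $x_0\in\partial B_1$ is correct, and the flat case $\eta=0$ is exactly the classical identity $(-\Delta)^s(e\cdot x)_+^{s+\gamma}=-c_0\,(e\cdot x)_+^{\gamma-s}$ with $c_0>0$ for $\gamma\in(0,s)$. The gap is in the perturbation step. The uniform estimate
\[
\bigl|(-\Delta)^s\Phi(x_0)-(-\Delta)^s\Phi_0(x_0)\bigr|\le \omega(\eta)\,d_\eta(x_0)^{\gamma-s}
\]
is \emph{false} near $\partial\mathcal C_\eta$. Take $x_0\in\partial B_1$ with $d_\eta(x_0)=\varepsilon\ll\eta$. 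Then $e\cdot x_0\approx \eta/4$, so $(-\Delta)^s\Phi_0(x_0)=-c_0(e\cdot x_0)^{\gamma-s}$ is a \emph{bounded} negative number of order $\eta^{\gamma-s}$, whereas $(-\Delta)^s\Phi(x_0)$ blows up like $-c\,\varepsilon^{\gamma-s}$ (since $\partial\mathcal C_\eta$ is smooth and $\Phi\sim d_\eta^{s+\gamma}$ there). Their difference is therefore of order $\varepsilon^{\gamma-s}=d_\eta(x_0)^{\gamma-s}$ with a coefficient of order one, not $\omega(\eta)$. Your ``near'' estimate breaks down for the same reason: in $B_r(x_0)$ with $r=d_\eta(x_0)/2\ll\eta$ one has $F_\eta\sim r$ but $F_0\sim\eta$, so $|D^2(\Phi-\Phi_0)|\sim r^{s+\gamma-2}$ and the local contribution is $\sim r^{\gamma-s}$, again with no gain in $\eta$.

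The strategy can be salvaged, but the comparison function must change with the basepoint. In the regime $d_\eta(x_0)\ge\delta$ your argument works as written. In the regime $d_\eta(x_0)<\delta$, compare $\Phi$ not with $\Phi_0$ but with the flat profile $(\nu(x_0)\cdot(x-\bar x_0))_+^{s+\gamma}$ built on the tangent half-space to $\mathcal C_\eta$ at the nearest boundary point $\bar x_0$; after rescaling by $d_\eta(x_0)$ this gives the desired bound with an error controlled by the curvature of $\partial\mathcal C_\eta$, which is $O(\eta)$, uniformly in $x_0$. The compactness alternative you mention can also be made to work, but the blow-up limit is this tangent half-space profile, not $\Phi_0$ itself; you should state it that way.
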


Using the previous Lemma, we now show that if $(x_0,t_0)$ is a regular free boundary point, then all free boundary points in a neighborhood of $(x_0,t_0)$ are also regular.

\begin{prop}\label{contagi}
Assume that $(x_0,t_0)$ is a regular free boundary point with exponent $\epsilon>0$ and modulus~$\nu$.
Set $\gamma_s:=\min\left\{2s-1,1-s\right\}$.
For any $\gamma \in (0,\gamma_s)$ there are $r>0$ and $c>0$ such that, for every free boundary point $(x_1,t_1)\in \partial \{u=\varphi\}\cap Q_r(x_0,t_0)$, we have
\begin{equation}\label{nondegeneracy}
u(x_1+\lambda e,t_1)\geq c\lambda^{1+s+\gamma},\qquad \partial_e u(x_1+\lambda e,t_1)\geq c\lambda^{s+\gamma}.
\end{equation}
In particular, every point on $\partial \{u=\varphi\}\cap Q_r(x_0,t_0)$ is regular with exponent $\frac{1-s-\gamma}{2}>0$ and modulus of continuity $\tilde\nu(\rho):=c\rho^{(s+\gamma-1)/2}$.
\end{prop}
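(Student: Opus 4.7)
The plan is to derive the non-degeneracy \eqref{nondegeneracy} by a barrier comparison in which the $(s+\gamma)$-homogeneous subsolution $\Phi$ of Lemma~\ref{homog-subsol} controls from below the directional derivative $w:=\partial_e v$ of $v:=u-\varphi$, where $e\in\mathbb{S}^{n-1}$ is the direction coming from the blow-up at $(x_0,t_0)$ via Proposition~\ref{proprescalings}. Once the comparison $w\ge c_*\Phi(\cdot-x_1)$ is established on a cylinder $Q_\rho(x_1,t_1)$, evaluating at $x_1+\lambda e$ gives the second bound in \eqref{nondegeneracy} thanks to the homogeneity of $\Phi$, and integrating along the ray $s\mapsto x_1+se$ while using $v(x_1,t_1)=0$ yields the first bound. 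The ``in particular'' assertion then follows by turning the pointwise bound into $\|u-\varphi\|_{L^\infty(Q_\rho(x_1,t_1))}\ge c\rho^{1+s+\gamma}$, dividing by $\rho^{2-\tilde\epsilon}$ with $\tilde\epsilon:=(1-s-\gamma)/2$, and matching Definition~\ref{def-regular-2} with modulus $\tilde\nu(\rho)=c\rho^{(s+\gamma-1)/2}$.

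For the setup, I fix $\gamma\in(0,\gamma_s)$, let $\eta=\eta(\gamma)$ be as in Lemma~\ref{homog-subsol}, and use Corollary~\ref{cor-Lip} together with Proposition~\ref{free-bdry-Lipschitz} to shrink $r$ so that in $Q_r(x_0,t_0)$ the free boundary is a Lipschitz graph in $(x,t)$ with $x$-slope below $\eta/16$, $\partial_e v\ge 0$, and (upon rescaling) $\partial_e v\ge c_1>0$ on the region $\{(x-x_0)\cdot e\ge\eta r\}$. The geometric consequence is that for every free boundary point $(x_1,t_1)\in Q_{r/2}(x_0,t_0)$ and every time $t\in (t_1-(r/4)^{2s},t_1]$, the slice $\{v(\cdot,t)=0\}\cap B_{r/4}(x_1)$ of the contact set is contained in $x_1+(\mathbb{R}^n\setminus\mathcal C_\eta)$, where $\Phi(\cdot-x_1)\equiv 0$. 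Differentiating $\partial_t u+(-\Delta)^s u=0$ on $\{u>\varphi\}$ in the $e$-direction yields $(\partial_t+(-\Delta)^s)w=-\partial_e(-\Delta)^s\varphi$, with right-hand side bounded by a constant $C_\varphi$ thanks to \eqref{obstacle}, while the time-independence of $\Phi$ together with Lemma~\ref{homog-subsol} gives $(\partial_t+(-\Delta)^s)\Phi(\cdot-x_1)\le -c_\gamma\,d(x-x_1)^{\gamma-s}$ on $x_1+\mathcal C_\eta$.

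The main step, and the principal obstacle, is the maximum principle comparison itself. Setting $\Psi:=w-c_*\Phi(\cdot-x_1)$ on a cylinder $Q_\rho(x_1,t_1)$ with small $\rho\in(0,r/4)$, in $Q_\rho(x_1,t_1)\cap\{v>0\}$ one has
\[(\partial_t+(-\Delta)^s)\Psi\ge -C_\varphi+c_*c_\gamma\,d(x-x_1)^{\gamma-s}\ge -C_\varphi+c_*c_\gamma\rho^{\gamma-s}\ge 0,\]
the last inequality holding for $\rho$ small enough (this uses $\gamma<s$, so that $\rho^{\gamma-s}\to+\infty$); on the contact set $\Psi=w\ge 0$ since both terms vanish; on the parabolic boundary of $Q_\rho(x_1,t_1)$ intersected with $\{v>0\}$, the lower bound on $w$ inherited from Proposition~\ref{free-bdry-Lipschitz} (propagated from $\{(x-x_0)\cdot e\ge \eta r\}$ to the vicinity of $(x_1,t_1)$ by the parabolic Harnack inequality for the fractional heat operator) dominates the crude bound $\Phi(\cdot-x_1)\le \rho^{s+\gamma}$ provided $c_*$ is chosen small; the restriction $\gamma<2s-1$ is what makes this choice of $c_*$ consistent across scales with the interior subsolution inequality above. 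The parabolic maximum principle applied to $\min\{\Psi,0\}$ on $Q_\rho(x_1,t_1)$ then forces $\Psi\ge 0$, i.e., $w\ge c_*\Phi(\cdot-x_1)$ in $Q_\rho(x_1,t_1)$, closing the argument. The delicate part is precisely this balance: reconciling the bounded error $C_\varphi$ on the right-hand side with the unbounded strength $c_\gamma d^{\gamma-s}$ of the barrier, while simultaneously keeping the boundary data above $c_*\Phi$; the combined restrictions $\gamma<s$ and $\gamma<2s-1$ are exactly what permit this.
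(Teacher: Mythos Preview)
Your comparison argument has a genuine gap: you apply the parabolic maximum principle for $\partial_t+(-\Delta)^s$ as if it were a local operator, checking $\Psi\ge 0$ only on the parabolic boundary of $Q_\rho(x_1,t_1)$. For a nonlocal operator the minimum principle requires $\Psi\ge 0$ on the \emph{exterior} $(\R^n\setminus B_\rho(x_1))\times(t_1-\rho^{2s},t_1]$, not merely on $\partial B_\rho(x_1)$. This exterior condition fails outright: $\Phi(\cdot-x_1)$ grows like $|x-x_1|^{s+\gamma}$ along the cone, while $w=\partial_e v$ is globally bounded (by the $C^{1+s}_x$ regularity of $u$), so $\Psi=w-c_*\Phi(\cdot-x_1)\to-\infty$ at infinity for any $c_*>0$. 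No choice of $c_*$ or $\rho$ repairs this. Your appeal to the parabolic Harnack inequality to control $w$ ``on the parabolic boundary'' is also unjustified: $w$ vanishes on the contact set, so it is not a positive solution in any cylinder containing free boundary points.

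The paper circumvents exactly this difficulty by passing to the Caffarelli--Silvestre extension, where $L_a$ is \emph{local} and a standard maximum principle holds on the bounded region $\B_1(x_1)\cap\{0<y<\eta\}$. The comparison function is $\psi(x,y)=c_2\Phi(x-x_1,y)+\frac{\kappa}{4(1-s)}y^2$, with $\Phi(\cdot,y)$ the $a$-harmonic extension of $\Phi$. The key quantitative input you are missing is the bound $\partial_e v_r\ge c_2\,y^{2s}$ from Proposition~\ref{free-bdry-Lipschitz}; combined with the Poisson representation $\Phi(x,y)\le C y^{2s}$ on the part of $\partial\B_1(x_1)$ where $\Phi(\cdot,0)$ vanishes nearby, this is what makes the lateral comparison go through. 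The constraint $\gamma<2s-1$ does not enter via any ``consistency across scales'' but through the Neumann condition at $\{y=0\}$: one needs $c_2 c_\gamma d_x^{\gamma-s}\ge C\kappa d_x^{s-1}$ (the right side coming from $|\nabla\partial_t v_r|\le C\kappa d_x^{s-1}$), which for $d_x\le 1$ requires $\gamma-s\le s-1$.
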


\begin{proof}
Fix $0<\gamma<\gamma_s$, and let $\eta>0$ and $\mathcal C_\eta$ be given by Lemma~\ref{homog-subsol} (note that $\gamma_s<s$).

Let $v$ be given by \eqref{v}, and $v_r$ be defined  as in \eqref{xr}.
Also, let $\kappa>0$ be a small number to be fixed later. By Proposition \ref{free-bdry-Lipschitz} and Corollary \ref{cor-Lip}, there exists $e\in \mathbb{S}^{n-1}$
and $r>0$ small enough such that \eqref{5-3} holds and
\[(x_1+\mathcal C_{\eta/4})\cap B_2(x_1)\subset\{v_r(\cdot,t)>0\} \qquad \forall\,x_1 \in \{v_r(\cdot,t)>0\} \cap B_{1/4}, \,\forall\,t\in(-1,1).\]
Noticing that the function $v_r$ solves
\[\begin{cases}
L_a v_r=y^a g(x)  \quad &\mbox{in } \Q_1\cap\{y>0\}\\
\lim_{y\downarrow0}y^{1-2s}\partial_yv_r=\partial_t v_r \quad &\mbox{on } \Q_1\cap \{y=0\}\cap \{v_r>0\}.
\end{cases}\]
with
\[g(x):=\frac{r^2(\Delta\varphi)(x_0+rx)-r^2\Delta \varphi(x_0)}{\|v\|_{L^\infty(\Q_r(x_0,t_0))}},\qquad |\nabla g|\leq C_1r^{1+\epsilon},\]
as in \eqref{imp_t} it follows by \eqref{5-3} combined with interior estimates that
$$|\nabla \partial_t v_r|\leq C\kappa d_x^{s-1}$$
for some $C$ independent of $\kappa$.

Now, fix $t\in (-1,1)$ and define
\[w(x,y):=\partial_e v_r(x,y,t).\]
Thanks to the previous considerations, choosing $r$ small enough we have
\[\begin{cases}
|L_a w|\leq \kappa y^a  \quad &\mbox{in } \B_2\cap\{y>0\}\\
|\lim_{y\downarrow0}y^{1-2s}\partial_yw|\leq C\kappa d_x^{s-1} \quad &\mbox{on } \B_2\cap \{y=0\}\cap \{w>0\}.
\end{cases}\]
Moreover, by \eqref{5-3} and \eqref{5-4} we have
\begin{equation}\label{c1-1}
w\geq c_1>0\quad\textrm{in the set}\quad \B_2\cap \{x\cdot e\geq \eta/16\}.
\end{equation}
and
\begin{equation}\label{c1-2}
w\geq c_1y^{2s}\quad \mbox{in}\quad \B_2.
\end{equation}
We want to use the function $\Phi$ in Lemma \ref{homog-subsol} as a subsolution at any free boundary point of $w$ near $0$. To this aim we note that,
as a consequence of \eqref{c1-1}, if $x_1$ is a free boundary point close to $0$ then
\begin{equation}\label{ayuda}
 x_1\cdot e\leq \eta/16.
\end{equation}
Denote $\Phi(x,y)$ the extension of $\Phi(x)$ in $\R^{n+1}_+$, which satisfies
\[\begin{cases}
L_a \Phi=0  \quad &\mbox{in } \{y>0\}\\
\lim_{y\downarrow0}y^{1-2s}\partial_y\Phi\geq c_\gamma\, d_x^{\gamma-s} \quad &\mbox{on } \{y=0\}\cap \mathcal C_\eta\\
\Phi=0 \quad &\mbox{on } \{y=0\}\setminus\mathcal C_\eta.
\end{cases}\]
We recall that $\Phi$ can be written via the Poisson formula as 
\begin{equation}\label{poisson}
\Phi(x,y)=C_{n,s}\, y^{2s} \int_{\R^n} \frac{\Phi(z,0)}{(|x-z|^2+y^2)^{(n+2s)/2}}\,dz\qquad \forall\,y>0
\end{equation}
(see \cite[Section 2.4]{CSext}).

Consider now $x_1\in\partial\{w=0\}\cap B_{1/4}$, and define the function
\[\psi(x,y):=c_2\Phi(x-x_1,y)+\frac{\kappa}{4(1-s)} y^2,\]
so that
\begin{equation}\label{max1}
L_a\psi = \kappa y^a\geq L_aw  \qquad \mbox{in } \B_1(x_1)\cap \{0<y<\eta\}.
\end{equation}
Recalling that $(x_1+\mathcal C_{\eta})\cap \B_1(x_1)\subset\{w>0\}$
(see \eqref{c1-1}), we have
\begin{multline}\label{max2}
\lim_{y\downarrow0}y^{1-2s}\partial_y\psi =-(-\Delta)^s \Phi\geq c_2 c_\gamma\, d_x^{\gamma-s}\\
\geq \kappa d_x^{s-1}\geq \lim_{y\downarrow0}y^{1-2s}\partial_y w
   \quad \mbox{on } \B_1(x_1)\cap \{y=0\}\cap (x_1+\mathcal C_\eta),
\end{multline}
provided that $\kappa>0$ is small enough. Also
\begin{equation}\label{max3}
\psi=0\leq w\qquad \mbox{on}\quad \bigl(\B_1(x_1)\cap \{y=0\} \bigr)\setminus (x_1+\mathcal C_\eta),
   \end{equation}
and it follows by \eqref{c1-2} that
  \begin{equation}\label{max4}
\psi\leq w\qquad \mbox{on}\quad \B_1(x_1)\cap \{y=\eta\}
   \end{equation}
provided $c_2$ and $\kappa$ are sufficiently small.

We now check what happens on $\partial\B_1(x_1)\cap \{0<y<\eta\}$.
First of all we see that, thanks to \eqref{c1-1},
  \begin{equation}\label{max5}
\psi\leq w\qquad \mbox{on}\quad \partial\B_1(x_1)\cap \{x\cdot e>\eta/16\}\cap \{0<y<\eta\}
   \end{equation}
   provided $c_2$ and $\kappa$ are sufficiently small.
   Finally, since $\Phi$ vanishes on a uniform neighborhood $N\subset \R^n$ of $\partial\B_1(x_1)\cap \{y=0\}\cap\{x\cdot e\leq \eta/16\}$,
   it follows by \eqref{poisson} that $\Psi \leq C \,y^{2s}$ on $N\times [0,\eta]$.
   This implies that $\psi\leq C(c_2+\kappa)y^{2s}$ on $N\times [0,\eta]$,
   that combined with \eqref{c1-2} proves that
     \begin{equation}\label{max6}
\psi\leq w\qquad \mbox{on}\quad \partial\B_1(x_1)\cap \{x\cdot e\leq \eta/16\}\cap \{0<y<\eta\}
   \end{equation}
   if $c_2$ and $\kappa$ are sufficiently small.
Hence, combining \eqref{max1}, \eqref{max2}, \eqref{max3},
\eqref{max4}, \eqref{max5}, and \eqref{max6}, it follows by the maximum principle that
$w\geq \psi$ in $\B_1(x_1)$.
In particular we deduce that
\[w(x,0) \ge \psi(x,0)=c_2\Phi(x-x_1)\quad \textrm{in}\quad B_1\cap\{y=0\}.\]
and recalling that $\Phi$ is homogeneous of degree $s+\gamma$, we find
\[\partial_e v_r(x_1+\lambda e,0,t)=w(x_1+\lambda e,0)>c_3\lambda^{s+\gamma}\]
for $\lambda\in (0,1)$.
Integrating in $\lambda$, we get $v_r(x_1+\lambda e,0,t)\geq c_4\lambda^{1+s+\gamma}$ for $\lambda\in(0,1)$.
Since $1+s+\gamma<2$, this means that $(x_1,t)$ is a regular free boundary point for $v_r$, with $\epsilon=\frac12(1-s-\gamma)$ and $\tilde\nu(\rho)=c_4\rho^{\frac12(s+\gamma-1)}$. Since $(x_1,t)$ was arbitrary in $B_{1/4}\times (-1,1)$ and
$v_r$ is a rescaled version of $u-\varphi$, the Proposition follows.
\end{proof}

Using the previous result, we find the following.

\begin{cor}\label{cor-C1x}
Assume $(x_0,t_0)$ is a regular point.
Then, there is $r>0$ such that the free boundary is $C^1_x$ in $Q_r(x_0,t_0)$, with a uniform modulus of continuity.
\end{cor}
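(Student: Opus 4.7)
The plan is to combine Proposition \ref{contagi} with the pointwise $C^1_x$ statement contained in Corollary \ref{cor-Lip}, exploiting that the contagion proposition produces a \emph{uniform} exponent and modulus in a whole neighborhood of $(x_0,t_0)$.

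First, I would apply Proposition \ref{contagi} to obtain $r_0>0$ and $\gamma\in(0,\gamma_s)$ such that every free boundary point $(x_1,t_1)\in\partial\{u=\varphi\}\cap Q_{r_0}(x_0,t_0)$ is a regular point with the common exponent $\epsilon'=(1-s-\gamma)/2$ and common modulus $\tilde\nu(\rho)=c\rho^{(s+\gamma-1)/2}$. Combined with Corollary \ref{cor-Lip} applied at $(x_0,t_0)$, we may represent the free boundary in $Q_{r_0}(x_0,t_0)$ (after a fixed rotation) as a Lipschitz graph $x_n=G(x',t)$. For each base point $(x_1,t_1)$ on this graph, let $e(x_1,t_1)\in\mathbb S^{n-1}$ denote the unit normal direction supplied by Proposition \ref{free-bdry-Lipschitz}.

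Next, I would apply the last part of Corollary \ref{cor-Lip} at each such $(x_1,t_1)$. The crucial point is that the radius $r(\ell,\epsilon',\tilde\nu,n,s)$ appearing there depends on the regular point only through $\epsilon'$ and $\tilde\nu$, and these are the \emph{same} for every $(x_1,t_1)\in\partial\{u=\varphi\}\cap Q_{r_0}(x_0,t_0)$. Therefore, for every $\ell>0$ there is $\rho=\rho(\ell)>0$, independent of the base point, such that the free boundary inside $Q_\rho(x_1,t_1)$ is an $\ell$-Lipschitz graph in the frame adapted to $e(x_1,t_1)$. Applying the same bound with base point $(x_2,t_2)\in\partial\{u=\varphi\}\cap Q_\rho(x_1,t_1)$ and comparing the two slope bounds yields $|e(x_1,t_1)-e(x_2,t_2)|\le C\ell$, so $e$ is continuous on the free boundary with modulus $\omega$ determined by $\ell\mapsto\rho(\ell)$. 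Translated into the graph description, this is exactly the statement that $\nabla_{x'}G$ is continuous in $(x',t)$ with the uniform modulus $\omega$ on $Q_r(x_0,t_0)$ for $r<r_0$ small.

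The main, and only mild, obstacle is bookkeeping the rotations: Corollary \ref{cor-Lip} produces a preferred direction $e(x_1,t_1)$ at each regular point, but the graph $G$ is written with respect to a single frame centered at $(x_0,t_0)$. The bound $[G]_{\mathrm{Lip}_x(Q_\rho(x_1,t_1))}\le\ell$ still holds in the original frame as soon as $\ell$ is small enough that the tangent hyperplanes at all nearby points stay transversal to the fixed $x_n$-axis, which is guaranteed by the $C^1_x$-at-$(x_0,t_0)$ statement after choosing $r_0$ smaller if necessary. Once this is fixed, the uniform modulus $\omega$ for $\nabla_{x'}G$ in $Q_r(x_0,t_0)$ follows immediately.
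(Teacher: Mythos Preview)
Your proposal is correct and follows essentially the same approach as the paper: apply Proposition~\ref{contagi} to obtain a uniform exponent and modulus for all nearby free boundary points, then invoke Corollary~\ref{cor-Lip} at each such point, using that the radius $r(\ell,\epsilon,\nu,n,s)$ there depends on the point only through the now-uniform data. The paper's proof is in fact just these two sentences; your additional bookkeeping about comparing the directions $e(x_1,t_1)$ and reconciling the rotations merely makes explicit what the paper leaves to the reader.
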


\begin{proof}
By Proposition \ref{contagi}, all free boundary points in $Q_r(x_0,t_0)$ are regular, with a uniform exponent $\epsilon>0$ and modulus $\nu$.
Thus, by Corollary \ref{cor-Lip} the free boundary is $C^1_x$ at all such points, with a uniform modulus of continuity.
\end{proof}

Finally, using the results of \cite{RS-C1}, we deduce that the set of regular free boundary points is $C^{1,\alpha}$ in $x$.

\begin{cor}\label{cor-C1alphax}
Assume $(x_0,t_0)$ is a regular point.
Then, there is $r>0$ such that the free boundary is $C^{1,\alpha}_x$ in $Q_r(x_0,t_0)$, for some small $\alpha>0$.

Furthermore, there exists $c>0$ such that, for every free boundary point $(x_1,t_1)\in Q_r(x_0,t_0)\cap \partial\{u=\varphi\}$, we have
\[(u-\varphi)(x,t_1)=c(x_1,t_1)d_x^{1+s}(x,t_1)+o(|x-x_1|^{1+s+\alpha}),\]
where $c(x_1,t_1)\geq c$ and $d_x(x,t)={\rm dist}(x,\{u(\cdot,t)=\varphi\})$.
\end{cor}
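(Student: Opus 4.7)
The plan is to reduce the problem, time-slice by time-slice, to the stationary obstacle problem for the fractional Laplacian with $C^s$ right-hand side, and then invoke the elliptic $C^{1,\alpha}$ free boundary regularity of \cite{RS-C1}. By Corollary~\ref{cor-C1x} we may fix $r>0$ such that, in $Q_r(x_0,t_0)$, the free boundary is a $C^1$ graph in $x$ with a \emph{uniform} modulus of continuity, and by Proposition~\ref{contagi} every free boundary point $(x_1,t_1)\in Q_r(x_0,t_0)\cap \partial\{u=\varphi\}$ is regular with uniform exponent $\epsilon>0$, uniform modulus $\tilde\nu$, and the nondegeneracy $(u-\varphi)(x_1+\lambda e,t_1)\geq c\,\lambda^{1+s+\gamma}$ with constants independent of the point.

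First, I would freeze the time variable. For each $t_1$ with $|t_1-t_0|<r^{2s}$, set $w_{t_1}(x):=u(x,t_1)-\varphi(x)$. Since $\partial_t u + (-\Delta)^s u = 0$ on $\{u>\varphi\}$, the function $w_{t_1}$ solves the elliptic obstacle problem
\begin{equation*}
\min\bigl\{(-\Delta)^s w_{t_1} - f_{t_1},\; w_{t_1}\bigr\} = 0 \quad\text{in } B_r(x_0),
\end{equation*}
where $f_{t_1}(x):=-\partial_t u(x,t_1)-(-\Delta)^s\varphi(x)$. By Proposition~\ref{optimal_t} together with $\varphi\in C^4$, the family $\{f_{t_1}\}_{|t_1-t_0|<r^{2s}}$ is uniformly bounded in $C^s(B_r(x_0))$. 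Moreover, the contact set of $w_{t_1}$ is the $x$-section at time $t_1$ of $\{u=\varphi\}$, which is a $C^1$ domain with a modulus of continuity independent of $t_1$ by Corollary~\ref{cor-C1x}, and the nondegeneracy inherited from Proposition~\ref{contagi} holds uniformly in $t_1$ as well.

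Next, I would apply the main $C^{1,\alpha}$ free boundary regularity theorem of \cite{RS-C1} (which upgrades a $C^1$ free boundary to $C^{1,\alpha}$ in the elliptic obstacle problem with $C^s$ right-hand side and uniform nondegeneracy) to each slice $w_{t_1}$. This yields $\alpha>0$ and $c>0$, depending only on $n$, $s$, and the uniform constants collected above, such that for every free boundary point $(x_1,t_1)$ in $Q_r(x_0,t_0)\cap\partial\{u=\varphi\}$ the slice free boundary $\partial\{w_{t_1}>0\}$ is a $C^{1,\alpha}$ graph in $x$ near $x_1$, with uniform $C^{1,\alpha}$ bound, and
\begin{equation*}
w_{t_1}(x)=c(x_1,t_1)\,d_x^{\,1+s}(x,t_1)+o\bigl(|x-x_1|^{1+s+\alpha}\bigr),\qquad c(x_1,t_1)\geq c.
\end{equation*}
Because the estimate is uniform in $t_1$, patching the slices yields the claimed $C^{1,\alpha}_x$ regularity of the full free boundary in $Q_r(x_0,t_0)$.

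The main obstacle is verifying that the hypotheses of the result in \cite{RS-C1} are satisfied with constants independent of the time variable: the $C^s$ norm of the effective right-hand side $f_{t_1}$, the modulus of $C^1_x$ regularity of the free boundary, and the nondegeneracy constants all need to be taken uniform in $t_1$. The first is supplied by Proposition~\ref{optimal_t} (this is precisely why the $C^s_{x,t}$ regularity of $\partial_t u$ was proved), and the second and third follow from the uniform versions of Corollary~\ref{cor-C1x} and Proposition~\ref{contagi}. Once this uniformity is in place, the slice-by-slice application of \cite{RS-C1} closes the argument.
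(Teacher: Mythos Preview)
Your overall strategy---freeze time, recast $w_{t_1}=u(\cdot,t_1)-\varphi$ as the solution of an elliptic obstacle problem with right-hand side $f_{t_1}=-\partial_t u(\cdot,t_1)-(-\Delta)^s\varphi$, and then apply the results of \cite{RS-C1} uniformly in $t_1$---is exactly the route the paper takes. The uniform ingredients you identify (Proposition~\ref{optimal_t} for the regularity of $\partial_t u$, Corollary~\ref{cor-C1x} for the $C^1_x$ free boundary, Proposition~\ref{contagi} for nondegeneracy) are also the ones the paper uses.

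There is, however, one place where your black-box invocation hides the actual work. The reference \cite{RS-C1} does not contain a ready-made ``$C^1\Rightarrow C^{1,\alpha}$ free boundary theorem for the fractional obstacle problem with $C^s$ right-hand side''; it provides boundary regularity and boundary Harnack estimates for nonlocal \emph{equations} in $C^1$ and $C^{1,\alpha}$ domains. The paper bridges this gap by hand: it differentiates, rescales $w$ along a sequence $r_k\to 0$ so that the right-hand side for $\partial_e w_k$ becomes small, and then applies \cite[Theorem~1.3]{RS-C1} (with Remark~5.5 therein) to the nonnegative functions $\partial_e w_k$ to obtain the quotient bound $\|\partial_{e_i}w/\partial_{e_n}w\|_{C^\alpha}\leq C$. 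This yields $C^{1,\alpha}$ regularity of the level sets, and then \cite[Theorem~1.2]{RS-C1} is applied in the now-$C^{1,\alpha}$ domain to get $\partial_e w/d_x^s\in C^\alpha$ and hence the expansion. A key technical input you gloss over is that what is actually needed is not the $C^s$ norm of $f_{t_1}$ but the pointwise bound $|\nabla\partial_t u|\leq C\,d_x^{s-1}$ from \eqref{imp_t}, so that the equation for the derivative has a right-hand side bounded by $C(1+d_x^{s-1})$---this is precisely the singular right-hand side allowed in Remark~5.5 of \cite{RS-C1}. If you unpack your ``main theorem of \cite{RS-C1}'' step in this way, your argument becomes the paper's proof.
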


\begin{proof}
Let $(x_1,t_1)\in Q_r(x_0,t_0)\cap \partial\{u=\varphi\}$ be any free boundary point, and set
$$w(x):=u(x+x_1,t_1)-\varphi(x+x_1).$$
Also, denote $\Omega:=\{w>0\}$ and
recall that, by Corollary \ref{cor-C1x}, $\Omega$ is $C^1$ in a neighborhood of the origin.
After a rotation, we may assume that the normal vector to $\partial\Omega$ at the origin is $e_n$.
Recall also that $0$ is a regular free boundary point with an exponent $\epsilon>0$
and a modulus $\nu$ which are independent of the point  $(x_1,t_1)\in Q_r(x_0,t_0)\cap \partial\{u=\varphi\}$.
Throughout the proof, $C$ and $c$ will denote positive constants independent of $x_1$ and $t_1$.
\vspace{3mm}

First, we rescale the function $w$ as
\[w_k(x)=\frac{w(r_kx)}{r_k\|\nabla w\|_{L^\infty(B_{r_k})}},\qquad \nabla w_k(x)=\frac{(\nabla w)(r_kx)}{\|\nabla w\|_{L^\infty(B_{r_k})}},\]
along a sequence $r_k\to0$ such that $\|\nabla w\|_{L^\infty(B_{r_k})}\geq c(r_k)^{1-\epsilon}$, $\|\nabla w_k\|_{L^\infty(B_1)}=1$,  and
\[|\nabla w_k(x)|\leq C(1+|x|^{1-\epsilon})\]
(compare with Lemma \ref{lem-rescalings}).
On the other hand, recalling that
\[(-\Delta)^sw=-(-\Delta)^s\varphi-\partial_t u,\]
it follows by \eqref{obstacle} and  \eqref{imp_t} that
\[|(-\Delta)^s\partial_e w|\leq C(1+d_x^{s-1})\quad \textrm{in}\quad \Omega\cap B_r\]
for all $e\in \mathbb{S}^{n-1}$.
This implies that the rescaled functions $w_k$ satisfy
\[|(-\Delta)^s\partial_e w_k|\leq \kappa(1+d_x^{s-1})\quad \textrm{in}\quad \Omega\cap B_{r/r_k}\]
for $k$ large enough,
with $\kappa>0$ as small as desired.
\vspace{3mm}

Consider $e\in \mathbb S^{n-1}$ with $e\cdot e_n\geq 1/2$. Then,
it follows by \eqref{5-1} and \eqref{5-2}  that, for $k$ large enough, $\partial_e w_k\geq0$ in $B_{1/\kappa}$ and  $\sup_{B_1}\partial_e w_k\geq c>0$.
This allows us to apply \cite[Theorem 1.3]{RS-C1} (see also Remark 5.5 therein) and deduce that
\[\left\| \frac{\partial_e w_k}{\partial_{e_n} w_k}\right\|_{C^\alpha(\overline\Omega \cap B_1)}\leq C\]
for all such $e\in \mathbb{S}^{n-1}$.
In particular, setting $e=(e_i+e_n)/\sqrt2$, $i=1,...,n-1$, and using that $w_k$ is a rescaled version of $w$, choosing $k$ large enough but fixed,
the previous inequality yields
\begin{equation}\label{quotient}
\left\| \frac{\partial_{e_i} w}{\partial_{e_n} w}\right\|_{C^\alpha(\overline\Omega \cap B_r)}\leq C,
\end{equation}
for some $r>0$ small.

Now, notice that the normal vector $\hat \nu(x)$ to the level set $\{w=\lambda\}$ for $\lambda>0$ can be written as
\[\hat \nu(x)=\frac{\nabla w}{|\nabla w|}(x)=(\hat \nu^1(x),...,\hat \nu^n(x)),\]
\[\hat \nu^i(x)=\frac{\partial_{e_i}w/\partial_{e_n}w}{\sqrt{\sum_{j=1}^n(\partial_{e_j}w/\partial_{e_n}w)^2}}.\]
Hence, \eqref{quotient} implies that $|\hat \nu(x)-\hat \nu(y)|\leq C|x-y|^\alpha$ whenever $x,z\in \{w=\lambda\}\cap B_r$, with $C$ independent of $\lambda>0$.
Letting $\lambda\to0$, we find that $\partial\{w=0\}\cap B_r$ is $C^{1,\alpha}$.
\vspace{3mm}

Finally, once we know that $\Omega$ is of class $C^{1,\alpha}$, we can apply in \cite[Theorem 1.2]{RS-C1} (see also Remark 3.4 therein) to deduce that find
\[\left\| \partial_e w/d_x^s\right\|_{C^\alpha(\overline\Omega \cap B_r)}\leq C\]
for some $\alpha>0$, which yields
\[u(x,t_1)-\varphi(x)=c(x_1,t_1)d_x^{1+s}(x,t_1)+o(|x-x_1|^{1+s+\alpha}).\]
Finally, by \eqref{nondegeneracy} in Proposition \ref{contagi}, we deduce $c(x_1,t_1)\geq c>0$.
\end{proof}

\section{$C^{1,\beta}$ regularity of the free boundary in $x$ and $t$}
\label{sec7}

We finally prove that the free boundary is $C^{1,\beta}$ both in $x$ and $t$

\begin{prop}\label{C1alpha}
Let $\varphi\in C^4(\R^n)$ be an obstacle satisfying \eqref{obstacle}, and $u$ be the solution to \eqref{obst-pb}, with $s\in(\frac12,1)$.
Assume that $(x_0,t_0)$ is a regular free boundary point with exponent~$\epsilon>0$ and modulus~$\nu$,
and let $G:\R^{n-1}\times \R\to \R$ be as in Corollary \ref{cor-Lip}.
Then $G$ is of class $C^{1,\beta}_{x',t}$ inside $Q_{r}(x_0,t_0)$ for some small $\beta>0$.
\end{prop}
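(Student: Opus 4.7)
The plan is to identify $\partial_t G$ via implicit differentiation of the defining relation $v\bigl(x',G(x',t),t\bigr)=0$ on the free boundary (where $v=u-\varphi$), which formally yields
\[
\partial_t G(x',t) \;=\; -\frac{\partial_t v}{\partial_{x_n} v}\bigg|_{x_n=G(x',t)},
\]
interpreted as a trace from the positivity side. The quotient is well-defined and bounded: from \eqref{5-3}, $\partial_t v$ and $\partial_{x_n}v$ are comparable and strictly positive in $\{v>0\}$, and by Corollary \ref{cor-C1alphax} both vanish at rate $d_x^s$ at the free boundary, so the ratio extends continuously to $\partial\{v>0\}$ with values in a fixed compact subinterval of $(0,\infty)$. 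The proposition therefore reduces to proving $C^{\beta}$-regularity of this quotient, jointly in $x'$ and $t$.

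For the $x'$-direction I would repeat the strategy of Corollary \ref{cor-C1alphax}. By Proposition \ref{contagi} and Corollary \ref{cor-C1alphax}, every free boundary point in a neighborhood of $(x_0,t_0)$ is regular with uniform constants, and the free boundary is $C^{1,\alpha}_x$ at every time slice with a uniform modulus. Freezing $t$ and using the extension, both $\partial_t v$ and $\partial_{x_n} v$ solve $L_a$-type equations with controlled right-hand sides on the positivity set, vanish at rate $d_x^s$ on the contact set, and are positive inside; the boundary Harnack principle of \cite[Theorem 1.3]{RS-C1} applies fiberwise in $t$, producing a uniform bound
\[
\Bigl\|\tfrac{\partial_t v}{\partial_{x_n} v}\Bigr\|_{C^\alpha_{x}(\overline{\{v>0\}}\cap Q_r)}\leq C.
\]

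For the $t$-direction I would not attempt to prove a parabolic boundary Harnack; instead I would combine the $C^s_{x,t}$ estimates on $\partial_tv$ and $\nabla v$ from Proposition \ref{optimal_t} with the non-degeneracy $\partial_{x_n}v\geq c\,d_x^s$ from \eqref{5-5}. Moving $t$ by $\tau$ shifts numerator and denominator by at most $C\tau^s$, while the denominator is bounded below by $c\,d_x^s$; evaluating the quotient at a point at distance $d_x\sim\tau^{1/2}$ from the free boundary and balancing the two contributions yields a $C^{\beta_0}_t$-bound for some $\beta_0>0$. Combined with the $C^\alpha_{x'}$ estimate from the previous paragraph and the Lipschitz regularity already known from Corollary \ref{cor-Lip}, this gives $\partial_tG\in C^{\beta}_{x',t}$, completing the proof that $G\in C^{1,\beta}_{x',t}$.

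The main obstacle is precisely the time-regularity of the quotient, since no parabolic boundary Harnack for our fractional equation is available off the shelf. The strategy above bypasses this by using that the spatial boundary Harnack is already strong enough (uniformly in $t$), so that \emph{any} small Hölder gain in $t$, coming from the $C^s_t$ regularity of $\partial_tv$ and $\nabla v$ together with the spatial non-degeneracy, is enough to conclude. A delicate point to verify is that the constants in the fiberwise boundary Harnack and in the non-degeneracy estimates are indeed uniform as $t$ varies in a neighborhood of $t_0$, but this follows from the uniform-in-$t$ versions of Propositions \ref{free-bdry-Lipschitz}--\ref{contagi} once $(x_0,t_0)$ is regular.
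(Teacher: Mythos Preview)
Your approach via implicit differentiation and boundary Harnack is natural, but there is a genuine gap in the step where you apply \cite[Theorem~1.3]{RS-C1} fiberwise to the pair $(\partial_t v,\partial_{x_n} v)$. For $\partial_{x_n} v$ this is exactly what Corollary~\ref{cor-C1alphax} does, using that $|(-\Delta)^s\partial_{x_n} v|\le C(1+d_x^{s-1})$ (the right-hand side $\partial_{e}\partial_t v$ is controlled by~\eqref{imp_t}). For $\partial_t v$, however, the elliptic right-hand side at frozen $t$ is $(-\Delta)^s\partial_t v=-\partial_{tt}v$, and no suitable two-sided pointwise bound on $\partial_{tt}v$ is available: the only pointwise information is the one-sided semiconvexity $\partial_{tt}v\ge -\hat C$ from Lemma~\ref{semiconvex}, while interior parabolic estimates applied to $\partial_t v$ (which is $\le C d_x^s$ and solves the fractional heat equation) give at best $|\partial_{tt}v|\lesssim d_x^{-s}$. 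Since $-s<s-1$ for $s>\tfrac12$, this is strictly more singular than what the boundary Harnack of \cite{RS-C1}, as used here, tolerates. Your $C^\alpha_x$ estimate on the quotient $\partial_t v/\partial_{x_n} v$ is therefore not established, and the whole $t$-regularity step---which, as you describe it, needs the spatial H\"older continuity of the quotient to transfer the interior bound $|q(x,t+\tau)-q(x,t)|\lesssim\tau^s/d_x^s$ to the free boundary---collapses with it.

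The paper circumvents this by never looking at the quotient. It works with the expansion $v(x,t)=c(x',t)\,d_x^{1+s}+O(|x-x_1|^{1+s+\alpha})$ from Corollary~\ref{cor-C1alphax} and compares \emph{values} of $v$ at points $(x',G(x',t)+h,\,\cdot\,)$ displaced in time by $\tau\ll h$. A two-point comparison together with $v\in{\rm Lip}_t$ first gives $c(x',\cdot)\in C^{\alpha/4}_t$; then a symmetric three-point comparison at times $t-\tau,t,t+\tau$, using only $|v(B)+v(C)-2v(A)|\le C\tau^{1+s}$ from the $C^{1+s}_t$ regularity of Proposition~\ref{optimal_t}, isolates the second difference $G(x',t+\tau)+G(x',t-\tau)-2G(x',t)$ and bounds it by $C\tau^{1+\beta}$ after optimizing $h$. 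The point is that $u\in C^{1+s}_t$ is a second-difference statement which neither requires nor implies a pointwise bound on $\partial_{tt}v$, and the expansion-based argument is designed to exploit exactly that.
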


\begin{proof}
Let $v=u-\varphi$.

For every free boundary point $(x_1,t_1)\in Q_r(x_0,t_0)\cap \partial\{u=\varphi\}$, by Corollary \ref{cor-C1alphax} we have the expansion
\begin{equation}\label{expansion}
\left|v(x,t_1)-c(x_1,t_1)d_x^{1+s}(x,t_1)\right|\leq C|x-x_1|^{1+s+\alpha}\qquad \textrm{for}\quad t=t_1,
\end{equation}
where $c(x_1,t_1)\geq c>0$, and $d_x(x,t)={\rm dist}(x,\{u(\cdot,t)=\varphi\})$.

If $\hat\nu(x',t) \in \mathbb S^{n-1}$ is the normal vector (in $x$) to the free boundary at $(x',G(x',t),t)$, then denoting by $c(x',t):=c(x', G(x',t), t)$, and using that, by Corollary \ref{cor-C1alphax}, the function $x'\mapsto G(x',t)$ is of class $C^{1,\alpha}$ inside $Q_r(x_0,t_0)$, it follows by \eqref{expansion} that
\begin{equation}\label{expansion2}
\left|v(x,t)-c(x',t)\bigl(\{x-(x',G(x',t))\}\cdot\hat\nu(x',t)\bigr)_+^{1+s}\right|\leq C\bigl|x-(x',G(x',t))\bigr|^{1+s+\alpha}.
\end{equation}

Our objective is to prove that the function $G(x',t)$ is of class $C^{1+\beta}$ in the $t$-variable.
For this, we first show that $c(x',t)$ is $C^\gamma_t$ for some $\gamma>0$.
\vspace{3mm}

For that purpose, let us consider two free boundary points $(x',G(x',t),t)$ and $(x',G(x',t+\tau),t+\tau)$ in $Q_r(x_0,t_0)$, with $\tau>0$ small.
We fix a number $h\in(0,1)$ with $h\gg\tau>0$ and we next compare the expansions \eqref{expansion2} at the points
\[\begin{split}
A&:=(x',G(x',t)+h,t),\\
B&:=(x',G(x',t)+h,t+\tau)
,\end{split}\]
to show that $c(x',t)$ is $C^\gamma$ in the $t$-variable.

On the one hand, by \eqref{expansion2} at the point $A$ we have
\begin{equation}\label{exp1}
\left|v(A)-c(x',t)\bigl((0,h)\cdot\hat\nu(x',t)\bigr)_+^{1+s}\right|\leq C\,h^{1+s+\alpha}.
\end{equation}
On the other hand, \eqref{expansion2} at the point $B$ gives
\begin{multline}\label{exp2}
\left|v(B)-c(x',t+\tau)\bigl((0,h+G(x',t+\tau)-G(x',t))\cdot\hat\nu(x',t+\tau)\bigr)_+^{1+s}\right|  \\
\leq C|h+G(x',t+\tau)-G(x',t)|^{1+s+\alpha}.
\end{multline}
Now, since $G(x',t)$ is Lipschitz in $t$ (by Corollary \ref{cor-Lip}), and $\hat\nu(x',t)$ is $C^\alpha$ in $x'$, we find
\begin{equation}\label{normalvector}
|\hat\nu(x',t)-\hat\nu(x',t+\tau)|\leq C\,\tau^{\alpha/2}.
\end{equation}
Indeed, by $C^{1+\alpha}_x$ regularity of $G$ we have
\[\left| G(\bar x',t)-G(x',t)-\nabla_{x'} G(x',t)\cdot (\bar x'-x')\right|\leq C|\bar x'-x'|^{1+\alpha}.\]
Combining this estimate with the same one at time $t+\tau$, and using that
\begin{equation}\label{g-Lip-t}
|G(x',t+\tau)-G(x',t)|\leq C\,\tau
\end{equation}
(since $G$ is Lipschitz in $t$),
we find
\[(\bar x'-x')\cdot\bigl(\nabla_{x'} G(x',t)-\nabla_{x'} G(x',t+\tau)\bigr)\leq C\,\tau+C|\bar x'-x'|^{1+\alpha}.\]
Choosing $\bar x'$ such that $|\bar x'-x'|=\tau^{1/2}$ and $\bar x'-x'$ points in the same direction as $\nabla_{x'} G(x',t)-\nabla_{x'} G(x',t+\tau)$, we deduce that
\begin{equation}\label{poma}
\bigl|\nabla_{x'} G(x',t)-\nabla_{x'} G(x',t+\tau)\bigr|\leq C\,\tau^{1/2}+C\,\tau^{\alpha/2}\leq C\,\tau^{\alpha/2}.
\end{equation}
Thus, since $\hat \nu(x',t)$ is the normal vector to the graph of $G$, \eqref{normalvector} follows.

Using \eqref{normalvector} and \eqref{g-Lip-t}, and recalling that $h\gg\tau$, it follows from \eqref{exp2} that
\[\left|v(B)-c(x',t+\tau)\bigl((0,h)\cdot\hat\nu(x',t)\bigr)_+^{1+s}\right|\leq C\,h^{1+s+\alpha/2}+C\,\tau^{1+s}.\]
Combining the previous inequality with \eqref{exp1}, we find
$$\bigl|v(B)-v(A)\bigr|\geq \bigl(c(x',t+\tau)-c(x',t)\bigr)\bigl((0,h)\cdot\hat\nu(x',t)\bigr)_+^{1+s}-C\,h^{1+s+\alpha/2}-C\,\tau^{1+s}$$
Now, if $r$ is small enough, then by Corollary \ref{cor-Lip} we know that $\hat\nu(x',t)\cdot e_n\geq\frac12$, therefore
\begin{equation}\label{orden}
\bigl|v(B)-v(A)\bigr|\geq \frac12\bigl|c(x',t+\tau)-c(x',t)\bigr|\,h^{1+s}-C\,h^{1+s+\alpha/2}-C\,\tau^{1+s}.
\end{equation}
Using that $v$ is Lipschitz in $t$, this yields
\[|v(A)-v(B)|\leq C\,\tau,\]
so, by \eqref{orden},
\[\bigl|c(x',t+\tau)-c(x',t)\bigr|\,h^{1+s}\leq C\,h^{1+s+\alpha/2}+C\,\tau.\]
Thus, choosing $h=\sqrt{\tau}$, we deduce
\[\bigl|c(x',t+\tau)-c(x',t)\bigr|\leq C\,h^{\alpha/2}+C\,h^{2-1-s}\leq C\,h^{\alpha/2}=C\,\tau^{{\alpha}/{4}},\]
provided that $\alpha\in(0,1-s)$. Hence, this proves that $c(x',t)$ in \eqref{exp1} is $C^\gamma$ in~$t$ with $\gamma=\alpha/4$.
\vspace{3mm}

Using this, we now show that $\partial_t G(x',t)$ is $C^{\beta}$ in $t$.
We compare the expansions \eqref{expansion2} at the three points
\[\begin{split}
A&=(x',G(x',t)+h,t)\\
B&=(x',G(x',t)+h,t+\tau)\\
C&=(x',G(x',t)+h,t-\tau)
,\end{split}\]
with $h\gg \tau$.

As before, by \eqref{exp2} and \eqref{normalvector} we have
\[\left|v(B)- c(x',t+\tau)\bigl\{(0,h+G(x',t+\tau)-G(x',t))\cdot\hat\nu(x',t)\bigr\}_+^{1+s}\right| \leq C\,h^{1+s+\alpha/2}.\]
Now, using that $c(x',t)$ is $C^{\alpha/4}_t$, this yields
\[\left|v(B)- c(x',t)\bigl(h+G(x',t+\tau)-G(x',t)\bigr)^{1+s}\bigl(e_n\cdot\hat\nu(x',t)\bigr)_+^{1+s}\right| \leq C\,h^{1+s+\alpha/4}.\]
Analogously, we have
\[\left|v(C)- c(x',t)\bigl(h+G(x',t-\tau)-G(x',t)\bigr)^{1+s}\bigl(e_n\cdot\hat\nu(x',t)\bigr)_+^{1+s}\right| \leq C\,h^{1+s+\alpha/4}.\]
Therefore, combining the previous two inequalities with \eqref{exp1}, we find
\begin{multline*}
\bigl|v(B)+v(C)-2v(A)\bigr|\\
\geq  c(x',t)\left|h^{1+s}\left((1+a)^{1+s}+(1+b)^{1+s}-2\right) \right|\bigl(e_n\cdot \hat\nu(x',t)\bigr)^{1+s}_+-C\,h^{1+s+\alpha/4},
\end{multline*}
where
\[a:=\frac{G(x',t+\tau)-G(x',t)}{h},\qquad b:=\frac{G(x',t-\tau)-G(x',t)}{h}.\]
Recalling that $G$ is Lipschitz in $t$ (see Corollary \ref{cor-Lip}), since $h \gg \tau$ we observe that
$|a|\ll1$ and $|b|\ll1$.
Also, since $\hat\nu(x',t)\cdot e_n\geq\frac12$ and $c(x',t)\geq c>0$, 
\begin{equation}\label{orden2}
\bigl|v(B)+v(C)-2v(A)\bigr| \geq  c\left|h^{1+s}\left((1+a)^{1+s}+(1+b)^{1+s}-2\right) \right|-C\,h^{1+s+\alpha/4}.
\end{equation}
Hence, since
\[\left|(1+a)^{1+s}+(1+b)^{1+s}-2\right|\geq c|a+b|\qquad\textrm{for}\quad a,b \quad \textrm{small},\]
 it follows from \eqref{orden2} that
\[\bigl|v(B)+v(C)-2v(A)\bigr|\geq  c\,h^s\bigl|G(x',t+\tau)+G(x',t-\tau)-2G(x',t)\bigr| - C\,h^{1+s+\alpha/4}.\]
Finally, using that $u\in C^{1+s}_t$ (by Proposition \ref{optimal_t}), we get that
\[\bigl|v(B)+v(C)-2v(A)\bigr|\leq  C\,\tau^{1+s},\]
therefore
\[\bigl|G(x',t+\tau)+G(x',t-\tau)-2G(x',t)\bigr|\leq C\,h^{1+\alpha/4}+C\,\tau^{1+s}h^{-s}.\]
Setting $h=\tau^{\frac{1+s}{1+s+\alpha/4}}$, this gives
\[\bigl|G(x',t+\tau)+G(x',t-\tau)-2G(x',t)\bigr|\leq C\,\tau^{1+\beta},\]
with $\beta=\frac{\alpha s}{4+4s+\alpha}>0$.
Now, it is a standard fact that this bound implies that $G\in C^{1+\beta}_t$.
In particular, this yields $\partial_t G$ is $C^{\beta}_t$.
\vspace{3mm}

Note that, as a consequence of Corollary \ref{cor-C1alphax} and \eqref{poma} we know that $\nabla_{x'} G\in C^\beta_{x',t}$.
Although a priori we do not have informations about  the regularity of $\partial_t G$ with respect to $x'$,
we can still conclude that $G \in C^{1,\beta}_{x',t}$ (thus, in particular, $\partial_t G\in C^\beta_{x',t}$).

Indeed, fix $(x_0',t_0)$ and consider $(h',\tau) \in \R^{n-1}\times \R$ small.
Then, using that $\partial_t G\in C^{\beta}_t$ and $\nabla_{x'} G\in C^\beta_{x',t}$, we have
\begin{eqnarray*}
G(x_0'+h',t_0+\tau)&=&G(x_0',t_0+\tau)+\Bigl(\int_0^{1}\nabla_{x'}G(x_0'+sh',t_0+\tau)\,ds\Bigr)\cdot h'\\
&=&G(x_0',t_0)+\partial_tG(x_0',t_0)\,\tau +O(|\tau|^{1+\beta}) +\nabla_{x'}G(x_0',t_0)\cdot h'\\
&&+\Bigl(\int_0^{1}\nabla_{x'}G(x_0'+sh',t_0+\tau)-\nabla_{x'}G(x_0',t_0)\,ds\Bigr)\cdot h'\\
&=&G(x_0',t_0)+\partial_tG(x_0',t_0)\,\tau +O(|\tau|^{1+\beta})\\
&&+\nabla_{x'}G(x_0',t_0)\cdot h' + O\bigl((|h'|^\beta+|\tau|^\beta)|h'|\bigr)\\
&=&G(x_0',t_0)+\partial_tG(x_0',t_0)\,\tau +\nabla_{x'}G(x_0',t_0)\cdot h' \\
&&+ O(|h'|^{1+\beta}+|\tau|^{1+\beta}).
\end{eqnarray*}
This proves $G$ separates from its first order Taylor expansion by at most $|x'|^{1+\beta}+|t|^{1+\beta}$,
thus $G \in C^{1,\beta}_{x',t}$ as desired.
\end{proof}

\section{Proof of Theorem \ref{thm1}}
\label{sec8}

Let $(x_0,t_0)\in \partial\{u=\varphi\}$ be a regular free boundary point ---that is, a free boundary point at which (ii) does not hold.

By Propositions \ref{optimal_t} and \ref{C1alpha}, we have that $u\in C^{1+s}_{x,t}(Q_r(x_0,t_0))$ and the free boundary is $C^{1+\beta}_{x,t}$ in $Q_r(x_0,t_0)$, for some $\beta>0$ and $r>0$.
By Corollary \ref{cor-C1alphax}, for any free boundary point $(x_1,t_1)\in Q_r(x_0,t_0)$ we have the expansion
\[u(x,t_1)-\varphi(x)=c(x_1,t_1)d_x^{1+s}+o\bigl(|x-x_0|^{1+s+\alpha}\bigr).\]
Also, by the $C^{1,\beta}$ regularity of the free boundary in $x$ and $t$, we have
\[d_x^{1+s}=\bigl(e\cdot (x-x_0)+a(t-t_0)\bigr)_+^{1+s}+o\bigl(|x-x_0|^{1+s+\beta}+|t-t_0|^{1+s+\beta}\bigr),\]
for some $e\in \mathbb{S}^{n-1}$ and $a\in\R$.
Moreover, by monotonicity in $t$ we have $a\geq0$, and in fact, by \eqref{5-3} in Proposition \ref{free-bdry-Lipschitz}, we get $a>0$.

Combining the previous identities, and using that $(x_1,t_1)\mapsto c(x_1,t_1)$ is of class $C^{\alpha/4}$ in $x$ and $t$ (see the proof of Proposition \ref{C1alpha}), we deduce that
\[u(x,t)-\varphi(x)=c(x_0,t_0)\bigl(e\cdot (x-x_0)+a(t-t_0)\bigr)_+^{1+s}+ o\bigl(|x-x_0|^{1+s+\gamma}+|t-t_0|^{1+s+\gamma}\bigr)\]
with $\gamma:=\min\{\alpha/4,\beta\}$, $c(x_0,t_0)>0$, $a>0$, and $e\in \mathbb{S}^{n-1}$.
In particular this yields $\sup_{Q_r(x_0,t_0)}(u-\varphi)=c(x_0,t_0)r^{1+s}+o(r^{1+s+\gamma})$, and the theorem follows.
\qed


\begin{thebibliography}{00}

\bibitem[ACM16]{ACM} I. Athanasopoulos, L. Caffarelli, E. Milakis, \emph{Parabolic Obstacle Problems. Quasi-convexity and Regularity}, preprint arXiv (Jan. 2016).

\bibitem[BFR15]{BFR} B. Barrios, A. Figalli, X. Ros-Oton, \emph{Global regularity for the free boundary in the obstacle problem for the fractional Laplacian}, preprint arXiv (June 2015).

\bibitem[Caf77]{C} L. Caffarelli, \emph{The regularity of free boundaries in higher dimensions.} Acta Math. 139 (1977), 155-184.

\bibitem[CF13]{CF} L. Caffarelli, A. Figalli, \emph{Regularity of solutions to the parabolic fractional obstacle problem}, J. Reine Angew. Math., 680 (2013), 191-233.

\bibitem[CRS16]{CRS} L. Caffarelli, X. Ros-Oton, J. Serra, \emph{Obstacle problems for integro-differential operators: regularity of solutions and free boundaries}, preprint arXiv (Jan. 2016).

\bibitem[CSS08]{CSS} L. Caffarelli, S. Salsa, L. Silvestre, \emph{Regularity estimates for the solution and the free boundary of the obstacle problem for the fractional Laplacian}, Invent. Math. 171 (2008), 425-461.

\bibitem[CS07]{CSext} L. Caffarelli, L. Silvestre, \emph{An extension problem related to the fractional Laplacian}, Comm. Partial Differential Equations 32 (2007), 1245-1260.

\bibitem[CD16]{ChD} H. Chang-Lara, G. D\'avila, \emph{H\"older estimates for non-local parabolic equations with critical drift}, J. Differential Equations 260 (2016), 4237-4284.

\bibitem[CT04]{CT} R. Cont, P. Tankov, \emph{Financial Modelling With Jump Processes}, Chapman \& Hall, Boca Raton, FL, 2004.

\bibitem[DGPT13]{DGPT} D. Danielli, N. Garofalo, A. Petrosyan, T. To, \emph{Optimal regularity and the free boundary in the parabolic Signorini problem}, Mem. Amer. Math. Soc., to appear.

\bibitem[DS14]{DS} D. De Silva, O. Savin, \emph{Boundary Harnack estimates in slit domains and applications to thin free boundary problems}, Rev. Mat. Iberoam., to appear.

\bibitem[FR15]{XX} X. Fernandez-Real, X- Ros-Oton, \emph{Regularity theory for general stables operators: parabolic equations}, preprint arXiv (Nov. 2015).

\bibitem[GP09]{GP} N. Garofalo, A. Petrosyan, \emph{Some new monotonicity formulas and the singular set in the lower dimensional obstacle problem}, Invent. Math. 177 (2009), 415-461.

\bibitem[KPS15]{KPS} H. Koch, A. Petrosyan, W. Shi, \emph{Higher regularity of the free boundary in the elliptic Signorini problem}, Nonlinear Anal. 126 (2015), 3-44.

\bibitem[LS09]{LS} P. Laurence, S. Salsa, \emph{Regularity of the free boundary of an American option on several assets}, Comm. Pure Appl. Math. 62 (2009), 969-994.

\bibitem[MS95]{Nature} R. N. Mantegna, H. E. Stanley, \emph{Scaling behaviour in the dynamics of an economic index}, Nature 376 (1995), 46-49.

\bibitem[Mer76]{Merton} R. Merton, \emph{Option pricing when the underlying stock returns are discontinuous}, J. Finan. Econ. 5 (1976), 125–144.

\bibitem[MO69]{MO} S. A. Molchanov, E. Ostrovskii, \emph{Symmetric stable processes as traces of degenerate diffusion processes}, Theory Probab. Appl. 14 (1969), 128-131.

\bibitem[RS15]{RS-C1} X. Ros-Oton, J. Serra, \emph{Boundary regularity estimates for nonlocal elliptic equations in $C^1$ and $C^{1,\alpha}$ domains}, preprint arXiv (Dec. 2015).

\bibitem[S14]{Serra}{J. Serra}, {\em Regularity for fully nonlinear nonlocal parabolic equations with rough kernels}. Calc. Var. Partial Differential Equations 54 (2015), 615-629.

\end{thebibliography}
\end{document}